\newcommand{\thmlevel}{section}
\def\NewTheorem#1{%
  \newaliascnt{#1}{thm}
  \newtheorem{#1}[#1]{\csname #1Name\endcsname}
  \aliascntresetthe{#1}
  \expandafter\def\csname #1autorefname\endcsname{\csname #1name\endcsname}
  \expandafter\def\csname #1Autorefname\endcsname{\csname #1Name\endcsname}
}
\newcommand{\theoremName}{\iflanguage{francais}{Th\'eor\`eme}{Theorem}}
\newcommand{\pbName}{\iflanguage{francais}{Probl\`eme}{Problem}}
\newcommand{\dfName}{\iflanguage{francais}{D\'efinition}{Definition}}
\newtheorem{thm}{\theoremName}[\thmlevel]
\newtheorem{thmintro}{\theoremName}
\newtheorem{dfintro}[thmintro]{\dfName}
\theoremstyle{definition}
\theoremstyle{remark}
\renewenvironment{proof}[1][]{\par
  \pushQED{\qed}%
  \normalfont \topsep6\p@\@plus6\p@\relax
  \trivlist
  \item[\hskip\labelsep
        \bfseries
    \proofname\ifthenelse{\equal{#1}{}}{}{\textmd{ (#1)}}\@addpunct{.}]\ignorespaces
}{%
  \popQED\endtrivlist\@endpefalse
}
\let\originalleft\left
\let\originalright\right
\renewcommand{\left}{\mathopen{}\mathclose\bgroup\originalleft}
\renewcommand{\right}{\aftergroup\egroup\originalright}
\def\DeclareMathBinOp{\@ifstar{\declaremathbinop@star}{\declaremathbinop@nostar}}
\def\declaremathbinop@star#1#2{\def#1{\test@subnexp@star#2}}
\def\test@subnexp@star#1{\@ifnextchar_{\isol@subnexp@star#1}{\test@exp@star#1}}
\def\test@exp@star#1{\@ifnextchar^{\isol@expnsub@star#1}{\mathbin{#1}}}
\def\isol@subnexp@star#1_#2{\@ifnextchar^{\eval@subnexp@star#1_#2}{\mathbin{\operatorname*{#1}_{#2}}}}
\def\eval@subnexp@star#1_#2^#3{\mathbin{\operatorname*{#1}_{#2}^{#3}}}
\def\isol@expnsub@star#1^#2{\@ifnextchar_{\eval@expnsub@star#1^#2}{\mathbin{\operatorname*{#1}^{#2}}}}
\def\eval@expnsub@star#1^#2_#3{\mathbin{\operatorname*{#1}_{#3}^{#2}}}
\def\declaremathbinop@nostar#1#2{\def#1{\test@subnexp@nostar#2}}
\def\test@subnexp@nostar#1{\@ifnextchar_{\isol@subnexp@nostar#1}{\test@exp@nostar#1}}
\def\test@exp@nostar#1{\@ifnextchar^{\isol@expnsub@nostar#1}{\mathbin{#1}}}
\def\isol@subnexp@nostar#1_#2{\@ifnextchar^{\eval@subnexp@nostar#1_#2}{\mathbin{\underset{#2}{#1}}}}
\def\eval@subnexp@nostar#1_#2^#3{\mathbin{\overset{#3}{\underset{#2}{#1}}}}
\def\isol@expnsub@nostar#1^#2{\@ifnextchar_{\eval@expnsub@nostar#1^#2}{\mathbin{\overset{#2}{#1}}}}
\def\eval@expnsub@nostar#1^#2_#3{\mathbin{\overset{#2}{\underset{#3}{#1}}}}
\let\OLDtimes\times
\DeclareMathBinOp*{\times}{\OLDtimes}
\let\OLDamalg\amalg
\DeclareMathBinOp*{\amalg}{\OLDamalg}
\let\OLDotimes\otimes
\DeclareMathBinOp*{\otimes}{\OLDotimes}
\let\OLDwedge\wedge
\DeclareMathBinOp*{\wedge}{\OLDwedge}
\def\DeclareArrow#1#2{\def#1{\test@subnexp#2}}
\def\test@subnexp#1{\@ifnextchar_{\isol@subnexp#1}{\test@exp#1}}
\def\test@exp#1{\@ifnextchar^{\isol@expnsub#1}{#1}}
\def\isol@subnexp#1_#2{\@ifnextchar^{\eval@subnexp#1_#2}{\underset{#2}{#1}}}
\def\eval@subnexp#1_#2^#3{\underset{#2}{\overset{#3}{#1}}}
\def\isol@expnsub#1^#2{\@ifnextchar_{\eval@expnsub#1^#2}{\overset{#2}{#1}}}
\def\eval@expnsub#1^#2_#3{\overset{#2}{\underset{#3}{#1}}}
\let\OLDto\to
\DeclareArrow{\to}{\OLDto}
\DeclareArrow{\from}{\leftarrow}\newcommand{\A}{\mathbb{A}}
\newcommand{\T}{\mathbb{T}}
\newcommand{\ev}{\operatorname{ev}}
\newcommand{\op}{^{\mathrm{op}}}
\newcommand{\sSets}{\mathbf{sSets}}
\newcommand{\presh}{\operatorname{\mathcal{P}}}
\newcommand{\dAff}{\mathbf{dAff}}
\newcommand{\sCAlg}{\mathbf{sCAlg}}
\newcommand{\Perf}{\mathbf{Perf}}
\newcommand{\Map}{\operatorname{Map}}
\DeclareMathOperator*{\colim}{colim}
\newcommand{\B}{\operatorname{B}}
\newcommand{\Indu}[1]{\Ind^{\mathbb{#1}}}
\newcommand{\Prou}[1]{\Pro^{\mathbb{#1}}}
\newcommand{\Tateu}[1]{\Tate^{\mathbb{#1}}}
\newcommand{\inftyCatu}[1]{\inftyCat^{\mathbb{#1}}}
\newcommand{\PresLeftu}[1]{\mathbf{Pr}^{\mathrm{L,}\mathbb{#1}}_\infty}
\newcommandx*{\timesunder}[5][1={},2={},3=-2pt,4=0pt,5=0mm,usedefault]{\times_{\makebox[#5]{\raisebox{#3}{\ensuremath{\scriptstyle #1}}}}^{\makebox[#5]{\raisebox{#4}{\ensuremath{\scriptstyle #2}}}}}
\newcommand{\Gm}{\mathbb{G}_m}
\newcommandx*{\el}[4][2=\eldebutpardefaut,4={,}]{#1_{#2}#4\dots#4#1_{#3}}
\newcommand{\comma}[2]{\ensuremath \mathchoice {\raisebox{4pt}{$\displaystyle #1 $} \raisebox{2pt}{$\displaystyle / $} \displaystyle \hspace{-1pt}{#2}}{\raisebox{2pt}{$\textstyle #1 $} \raisebox{1pt}{$\textstyle / $} \textstyle \hspace{-1pt}{#2}}{\raisebox{2pt}{$\scriptstyle #1 $} \raisebox{1pt}{$\scriptstyle / $} \scriptstyle \hspace{-1pt}{#2}}{\raisebox{2pt}{$\scriptscriptstyle #1 $} \raisebox{1pt}{$\scriptscriptstyle / $} \scriptscriptstyle \hspace{-1pt}{#2}}}
\newcommandx*{\cart}[3][1=1,2=5,3=10,usedefault]{\ar@{-}[]+D+<#3pt,#1pt>+<#2pt,0pt>;[]+D+<#3pt,-#3pt>+<#2pt,#1pt> \ar@{-}[]+D+<0pt,-#3pt>+<#2pt,#1pt>;[]+D+<#3pt,-#3pt>+<#2pt,#1pt>}
\newcommandx*{\cocart}[3][1=-1,2=8,3=10,usedefault]{\ar@{-}[]+U+<-#2pt,-#1pt>;[]+U+<-#2pt,-#1pt>+<0pt,#3pt> \ar@{-}[]+U+<-#2pt,-#1pt>;[]+U+<-#2pt,-#1pt>+<-#3pt,0pt>}
\newcommand{\Z}{\mathbb{Z}}
\newcommand{\Q}{\mathbb{Q}}
\newcommand{\dual}[1]{{#1}^{\vee}}
\newcommand{\Cc}{\mathcal{C}}
\newcommand{\Dd}{\mathcal{D}}
\newcommand{\Ind}{\operatorname{\mathbf{Ind}}}
\newcommand{\Pro}{\operatorname{\mathbf{Pro}}}
\newcommand{\Tate}{\operatorname{\mathbf{Tate}}}
\newcommand{\quot}[2]{\ensuremath \mathchoice {\displaystyle #1 \raisebox{-2pt}{$\displaystyle \hspace{-1pt}{/} $} \raisebox{-4pt}{$\displaystyle \hspace{-1pt}{#2}$}}{\textstyle #1 \raisebox{-1pt}{$\textstyle \hspace{-1pt}{/} $} \raisebox{-2pt}{$\textstyle \hspace{-1pt}{#2}$}}{\scriptstyle #1 \raisebox{-1pt}{$\scriptstyle \hspace{-1pt}{/} $} \raisebox{-2pt}{$\scriptstyle \hspace{-1pt}{#2}$}}{\scriptscriptstyle #1 \raisebox{-1pt}{$\scriptscriptstyle \hspace{-1pt}{/} $} \raisebox{-2pt}{$\scriptscriptstyle \hspace{-1pt}{#2}$}}}
\newcommand{\mymatrix}{\shorthandoff{;:!?} \xymatrix}
\newcommand{\id}{\operatorname{id}}
\newcommand{\noloc}{\,:}
\newcommand{\loccit}{\emph{loc. cit.} }
\newcommand{\Oo}{\mathcal{O}}
\newcommand{\inftyCat}{\mathbf{Cat}_\infty}
\newcommand{\dSt}{\mathbf{dSt}}
\newcommand{\Homint}{\operatorname{\underline{Hom}}}
\newcommand{\Fct}{\operatorname{Fct}}
\newcommand{\Ee}{\mathcal E}
\newcommand{\homol}{\mathrm H}
\newcommand{\Det}{\operatorname{Det}}
\newcommand{\Spec}{\operatorname{Spec}}
\newcommand{\eldebutpardefaut}{1}
\setlist[enumerate]{label=\emph{(\roman*)},ref=\emph{(\roman*)}}
\title{Tate objects in stable $(\infty,1)$-categories}
\author{Benjamin Hennion\footnote{hennion@mpim-bonn.mpg.de, Max-Planck Institut für Mathematik, Vivatsgasse 7, Bonn, Germany}}
\date{\today}
\newlist{assertions}{enumerate}{1}
\setlist[assertions]{label={(\alph*)}, ref={assertion (\alph*)}}
\newlist{assumptions}{enumerate}{1}
\setlist[assumptions]{label={(\roman*)}, ref={assumption (\roman*)}}
\newlist{disjunction}{enumerate}{1}
\setlist[disjunction]{label={(\arabic*)}, ref={case (\arabic*)}}
\begin{document}

\selectlanguage{english}
\maketitle

\begin{abstract}
Tate objects have been studied by many authors. They allow us to deal with infinite dimensional spaces by identifying some more structure. In this article, we set up the theory of Tate objects in stable $(\infty,1)$-categories, while the literature only treats with exact categories. We will prove the main properties expected from Tate objects.
This new setting includes several useful examples: Tate objects in the category of spectra for instance, or in the derived category of a derived algebraic object -- which can be thought as structured infinite dimensional vector bundle in derived setting.
\end{abstract}
{\bf Keywords:} Tate objects, higher categories, K-theory\\
{\bf AMS class:} 18F25 18G55

\tableofcontents

\section*{Introduction}
\addcontentsline{toc}{section}{Introduction}%

Tate vector spaces have been used by many to deal with infinite dimensional spaces. Identifying some structure allows us to define a well-behaved duality on infinite dimensional spaces.
They were studied by several authors, including Lefschetz, Beilinson, Drinfeld and more recently Osipov and Zhu, Previdi, Saito, and Bräunling, Gröchenig and Wolfson.

In \cite{drinfeld:tate}, Drinfeld describes them the following way. Let us fix a field $k$. Let $V$ be a vector space, which we see as a discrete topological space. Its topological dual $\dual V$ is then what is called a linearly compact vector space. 
A Tate vector space is a topological space of the form $V \oplus \dual W$ where both $V$ and $W$ are discrete topological vector spaces.
The first example of such a Tate vector space is the field of Laurent series $k(\!( t )\!) \simeq k[\![ t ]\!] \oplus t^{-1} k[t^{-1}]$.
For any Tate vector space $X$, we then have $\dual{(\dual X)} \simeq X$.

Bräunling, Gröchenig and Wolfson then generalised this idea to any exact category, instead of vector spaces -- see \cite{bgw:tate}. Let $\Cc$ be an exact category -- whose objects will play the role of finite dimensional vector spaces. The category of ind-object in $\Cc$ is then an analogue to infinite dimensional spaces. The dual of an ind-object is naturally a pro-object. They define an elementary Tate object in $\Cc$ as an extension -- in a suitable category -- of a pro-object by an ind-object. An object $X$ is an elementary Tate object if it fits in an exact sequence
\[
X^p \to X \to X^i
\]
where $X^p$ is a pro-object and $X^i$ is an ind-object.
A Tate object is then a retract of an elementary Tate object.
Amongst examples of Tate objects is the field $\Q_p$, sitting in the exact sequence
\[
\Z_p \to \Q_p \to \quot{\Q_p}{\Z_p}
\]
of abelian groups. This example fits in the formalism of \cite{bgw:tate}.

In this article, we define and study Tate objects in stable and idempotent complete $(\infty,1)$-categories.
The context of higher categories allows us to talk about Tate objects in spectra for instance, or in derived algebraic geometry (see below).
If $\Cc$ is such a stable $(\infty,1)$-category, then the categories of ind- and pro-objects in $\Cc$ -- denoted by $\Ind(\Cc)$ and $\Pro(\Cc)$ -- are also stable.
We also consider the $(\infty,1)$-category of pro-ind-objects in $\Cc$, denoted by $\Pro \Ind(\Cc)$.
This allows us the following
\begin{dfintro}
Let $\Tate_\mathrm{el}(\Cc)$ denote the smallest full subcategory of $\Pro\Ind(\Cc)$ containing both the essential images of $\Ind(\Cc)$ and $\Pro(\Cc)$ and stable (by extension).
The category $\Tate(\Cc)$ of Tate objects is the idempotent completion of $\Tate_\mathrm{el}(\Cc)$.
\end{dfintro}

This definition is a priori different to the one we gave above. We will see below (see \autoref{introlattices}) that they coincide. For now, let us identify a universal property of $\Tate(\Cc)$.
\begin{thmintro}[see \autoref{univpropT0}]\label{introunivprop}
Let $\Cc$ be a stable and idempotent complete $(\infty,1)$-category. For any stable and idempotent complete $(\infty,1)$-category $\Dd$ and any commutative diagram of exact functors
\[
\mymatrix{
\Cc \ar[r]^-i \ar[d]_j & \Ind(\Cc) \ar[d]^f \\ \Pro(\Cc) \ar[r]_-g & \Dd
}
\]
such that $i$ and $j$ are the canonical embeddings, and such that $f$ preserves filtered colimits and $g$ preserves cofiltered limits there exists an essentially unique exact functor $\Tate(\Cc) \to \Dd$ through which both $f$ and $g$ factor.
\end{thmintro}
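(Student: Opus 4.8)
The plan is first to reduce to the construction of an exact functor out of $\Tate_{\mathrm{el}}(\Cc)$, and then to build it by gluing $f$ and $g$ over the category of lattices. Since $\Dd$ is idempotent complete and $\Tate(\Cc)$ is the idempotent completion of $\Tate_{\mathrm{el}}(\Cc)$, restriction along $\Tate_{\mathrm{el}}(\Cc)\hookrightarrow\Tate(\Cc)$ is an equivalence from exact functors $\Tate(\Cc)\to\Dd$ onto exact functors $\Tate_{\mathrm{el}}(\Cc)\to\Dd$; as the canonical embeddings $\Ind(\Cc)\hookrightarrow\Tate(\Cc)$ and $\Pro(\Cc)\hookrightarrow\Tate(\Cc)$ both factor through $\Tate_{\mathrm{el}}(\Cc)$, it suffices to produce an essentially unique exact functor $F\colon\Tate_{\mathrm{el}}(\Cc)\to\Dd$ through which $f$ and $g$ factor compatibly with the given square. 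Write $\Phi\colon\Cc\to\Dd$ for the common composite, so that the commutative square supplies an equivalence $f\circ i\simeq\Phi\simeq g\circ j$.

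The basic gluing datum I would extract is a natural transformation
\[
\Map_{\Pro\Ind(\Cc)}(-,-)\,{\big|}_{\Ind(\Cc)\op\times\Pro(\Cc)}\;\Longrightarrow\;\Map_{\Dd}(f(-),g(-)),
\]
natural in both variables. Both sides are restrictions of functors $\Pro\Ind(\Cc)\op\times\Pro\Ind(\Cc)\to\mathcal S$; for fixed $A\in\Ind(\Cc)$, each sends cofiltered limits in the $\Pro(\Cc)$-variable to limits — using that $\Pro(\Cc)\hookrightarrow\Pro\Ind(\Cc)$ preserves cofiltered limits, that $g$ does likewise, and that $\Map$ is representable — while on $\Cc$ the first maps to the second via $f$ together with $\Phi\simeq f\circ i$; the universal property of $\Pro(\Cc)$ as the free cofiltered completion of $\Cc$ then produces the transformation and its uniqueness, and a dual argument, using that $\Ind(\Cc)$ is the free filtered cocompletion, gives naturality in $A$. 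Equivalently, this assembles into a functor $\mathcal M\to\Dd$, where $\mathcal M$ is the $\infty$-category of arrows $\varphi\colon A\to B$ in $\Pro\Ind(\Cc)$ with $A\in\Ind(\Cc)$ and $B\in\Pro(\Cc)$, sending such a $\varphi$ to $\operatorname{cofib}\!\big(f(A)\to g(B)\big)$, the displayed map being the image of $\varphi$ under the transformation above; this functor restricts to $f$ on the arrows of the form $A\to 0$ and to $g$ on those of the form $0\to B$.

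Next I would invoke the lattice description of $\Tate_{\mathrm{el}}(\Cc)$ (see \autoref{introlattices} and the surrounding discussion). Let $\mathrm{Gr}(\Cc)$ be the $\infty$-category of pairs $(X,L)$ with $X\in\Tate_{\mathrm{el}}(\Cc)$ and $L\hookrightarrow X$ a lattice — a sub-pro-object with $X/L\in\Ind(\Cc)$ — with forgetful functor $p\colon\mathrm{Gr}(\Cc)\to\Tate_{\mathrm{el}}(\Cc)$. The analysis of lattices shows that $p$ exhibits $\Tate_{\mathrm{el}}(\Cc)$ as a localisation of $\mathrm{Gr}(\Cc)$ at a suitable class $W$ of morphisms; the essential inputs are that the lattices in a fixed $X$ form a filtered $\infty$-category, so the fibres of $p$ are weakly contractible, and that lattices can be refined compatibly along morphisms. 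On $\mathrm{Gr}(\Cc)$ the construction is transparent: the cofibre sequence $L\to X\to X/L$ is functorial in $(X,L)$, so $(X,L)\mapsto(X/L[-1]\to L)$ defines a functor $\mathrm{Gr}(\Cc)\to\mathcal M$, and postcomposing with $\mathcal M\to\Dd$ yields $\widetilde F\colon\mathrm{Gr}(\Cc)\to\Dd$, $(X,L)\mapsto\operatorname{cofib}\!\big(f(X/L)[-1]\to g(L)\big)$, sitting in a natural cofibre sequence $g(L)\to\widetilde F(X,L)\to f(X/L)$. One then checks that $\widetilde F$ sends the morphisms of $W$ to equivalences — comparing its values at two nested lattices and reducing to the exactness of $f$ and $g$ on the associated refinement cofibre sequences — so $\widetilde F$ descends along $p$ to the desired $F\colon\Tate_{\mathrm{el}}(\Cc)\to\Dd$.

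It then remains to verify the properties and uniqueness. Evaluating on the lattices $0\hookrightarrow A$ for $A\in\Ind(\Cc)$ and $B\hookrightarrow B$ for $B\in\Pro(\Cc)$ gives $F(A)\simeq f(A)$ and $F(B)\simeq g(B)$, and one checks these identifications are compatible with the square. For exactness, given a cofibre sequence $X'\to X\to X''$ in $\Tate_{\mathrm{el}}(\Cc)$ one uses filteredness to choose lattices $L'\subseteq X'$, $L\subseteq X$, $L''\subseteq X''$ that themselves form a cofibre sequence and map to the $X$'s; then $L'\to L\to L''$ is a cofibre sequence in $\Pro(\Cc)$ and $X'/L'\to X/L\to X''/L''$ one in $\Ind(\Cc)$, and exactness of $f$ and $g$ together with the fact that the cofibre of a map of cofibre sequences is again a cofibre sequence yields that $F(X')\to F(X)\to F(X'')$ is a cofibre sequence; $F$ clearly preserves the zero object. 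For uniqueness, if $G$ is exact with $G|_{\Ind(\Cc)}\simeq f$ and $G|_{\Pro(\Cc)}\simeq g$ compatibly, then for any $(X,L)$ exactness of $G$ forces a cofibre sequence $g(L)\to G(X)\to f(X/L)$ whose boundary map is $G$ applied to $X/L[-1]\to L$; since the source and target of this map already lie in $\Ind(\Cc)$ and $\Pro(\Cc)$, the uniqueness clauses in the universal properties of $\Ind(\Cc)$ and $\Pro(\Cc)$ identify the action of $G$ on such maps with the transformation of the second paragraph, whence $G(X)\simeq F(X)$; carrying this through the localisation $p$ upgrades it to an equivalence $G\simeq F$ under $f$ and $g$. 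The main obstacle I anticipate is precisely the passage from the pointwise recipe $(X,L)\mapsto\operatorname{cofib}(f(X/L)[-1]\to g(L))$ to a genuine functor of $\infty$-categories together with its descent along $p$; this is what forces the detour through $\mathrm{Gr}(\Cc)$ and the localisation, and it rests on the non-formal facts that the lattices in an elementary Tate object form a filtered $\infty$-category and can be chosen and refined compatibly along morphisms and along cofibre sequences.
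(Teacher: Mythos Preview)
Your approach is genuinely different from the paper's, and it has a real gap at the localisation step.

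\textbf{What the paper does.} The paper does not touch lattices at all for this theorem. It introduces an intermediate full subcategory $\Tate_0(\Cc)\subset\Pro\Ind(\Cc)$ of \emph{pure} Tate objects, namely the union of the essential images of $\Ind(\Cc)$ and $\Pro(\Cc)$; then $\Tate_{\mathrm{el}}(\Cc)$ is by definition the stable envelope of $\Tate_0(\Cc)$ (\autoref{tatification}), and $\Tate(\Cc)$ its idempotent completion. The universal property is proved for $\Tate_0(\Cc)$ directly (\autoref{univpropT0}): one embeds $\Dd$ into $\presh(\Dd)$, sets up the restriction/Kan-extension adjunctions
\[
\Fct(\Tate_0(\Cc),\presh(\Dd))\rightleftarrows\Fct(\Pro\Ind(\Cc),\presh(\Dd))\rightleftarrows\Fct(\Ind(\Cc),\presh(\Dd)),
\]
and checks that these restrict to an adjoint equivalence between $\Fct_t(\Tate_0(\Cc),\Dd)$ and $\Fct_m(\Cc,\Dd)$; the key point is that every object of $\Tate_0(\Cc)$ is literally either an ind-object or a pro-object, so the restriction functor is conservative. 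Passing to the stable envelope and then to the idempotent completion is formal. No lattices, no Grassmannian, no descent.

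\textbf{Where your argument is incomplete.} Your crucial step is the claim that the forgetful functor $p\colon\mathrm{Gr}(\Cc)\to\Tate_{\mathrm{el}}(\Cc)$ is a localisation. Filteredness of the fibres gives weak contractibility of the fibres, but that alone does not make $p$ a localisation: one standard sufficient condition is that $p$ be a (co)cartesian fibration with weakly contractible fibres, and $p$ is neither in any obvious way. Given $f\colon X\to Y$ and a lattice $L\subset X$, there is no canonical ``image'' lattice in $Y$ (the image of a pro-object under a map of Tate objects need not be pro), so cocartesian lifts do not exist in general; dually for cartesian lifts. \autoref{compatiblelattice} only gives \emph{some} compatible lattice, not a universal one. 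You would need a finer argument, e.g.\ showing that for every $X$ the comma category $\mathrm{Gr}(\Cc)\times_{\Tate_{\mathrm{el}}(\Cc)}\Tate_{\mathrm{el}}(\Cc)_{/X}$ is weakly contractible and that $p$ detects equivalences, or replacing the localisation claim by a direct Kan-extension argument along $p$ (which is close to what the paper does in \autoref{kanextensions}, but for a different purpose). Similarly, your exactness check needs lattices chosen compatibly along a cofibre sequence, which is stronger than what the paper establishes.

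\textbf{Comparison.} Your route is more hands-on and would, if completed, give a concrete formula for the extension in terms of lattices; the paper's route is shorter and entirely formal, at the cost of introducing the auxiliary $\Tate_0(\Cc)$. Note also that your argument consumes the lattice results of \autoref{introlattices}, which in the paper are proved \emph{after} the universal property; there is no actual circularity (those proofs do not use \autoref{univpropT0}), but it does invert the paper's logical order.
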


In particular, this theorem allows us to identify $\Tate(\Cc)$ with the smallest full subcategory of $\Ind \Pro(\Cc)$ containing $\Ind(\Cc)$ and $\Pro(\Cc)$, and stable and idempotent complete.
The proof of \autoref{introunivprop} uses tools of $(\infty,1)$-category theory developed in \cite{lurie:htt}.

We then focus on the properties Tate objects are supposed to satisfy. Note that the first item makes the two definitions of elementary Tate objects we gave coincide.
\begin{thmintro}\label{introlattices}
Let $\Cc$ be a small, stable and idempotent complete $(\infty,1)$-category and let $X$ be an elementary Tate object in $\Cc$.
\begin{enumerate}
\item There exists an exact sequence in $\Pro \Ind(\Cc)$
\[
X^p \to X \to X^i
\]
where $X^p \in \Pro(\Cc)$ and $X^i \in \Ind(\Cc)$ (see \autoref{latticesexist}). Such an exact sequence is called a lattice of $X$. Lattices of $X$ form an $(\infty,1)$-category.
\item The category of lattices of $X$ is small and both filtered and cofiltered, and we have
\[
X \simeq \lim_{X^\bullet} X^i \in \Pro \Ind(\Cc)
\hspace{5mm} \text{and} \hspace{5mm}
X \simeq \colim_{X^\bullet} X^p \in \Ind \Pro(\Cc)
\]
where $X^\bullet =(X^p \to X \to X^i)$ runs through the category of lattices of $X$ (see \autoref{colimoflattices}).
\end{enumerate}
\end{thmintro}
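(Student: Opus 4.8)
The plan is to obtain (i) by reducing to one base case and then to deduce (ii) from any single lattice produced by (i).

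\emph{(i), existence of a lattice.} Call an object of $\Tate_\mathrm{el}(\Cc)$ \emph{good} if it admits a lattice. Since $\Tate_\mathrm{el}(\Cc)$ is the smallest full subcategory of $\Pro\Ind(\Cc)$ containing $\Ind(\Cc)$ and $\Pro(\Cc)$ and closed under extensions, it suffices to show the good objects form such a subcategory. Objects of $\Ind(\Cc)$ (resp. $\Pro(\Cc)$) are good with $X^p=0$ (resp. $X^i=0$). For extensions, let $Y\to X\to Z$ be a cofibre sequence with $Y,Z$ good, with lattices $Y^p\to Y\to Y^i$ and $Z^p\to Z\to Z^i$. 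Two reductions bring us to the base case: replacing $X$ by $\operatorname{cofib}(Y^p\to X)$ — an extension of $Z$ by the ind-object $Y^i$, out of which $X$ is recovered as an extension by the pro-object $Y^p$ — reduces to $Y\in\Ind(\Cc)$; then replacing the new $X$ by $\operatorname{fib}(X\to Z\to Z^i)$ — an extension of the pro-object $Z^p$ by $Y\in\Ind(\Cc)$, out of which $X$ is recovered as an extension by the ind-object $Z^i$ — reduces to $Z\in\Pro(\Cc)$. Each reduction rests only on the trivial fact that an extension of a good object by a pro-object, or of an ind-object by a good object, is good (take the evident fibre, resp. cofibre). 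So it remains to treat an extension $I\to X\to P$ with $I\in\Ind(\Cc)$ and $P\simeq\lim_\alpha P_\alpha$, $P_\alpha\in\Cc$, cofiltered. Such an extension is classified by a point of $\Map_{\Pro\Ind(\Cc)}(P,\Sigma I)\simeq\colim_\alpha\Map_{\Ind(\Cc)}(P_\alpha,\Sigma I)$, hence is pulled back along $P\to P_{\alpha_0}$ from a map $e_0\colon P_{\alpha_0}\to\Sigma I$ in $\Ind(\Cc)$. Setting $X_0:=\operatorname{fib}(e_0)\in\Ind(\Cc)$ we get $X\simeq P\times_{P_{\alpha_0}}X_0$, and this square — a pushout, since $\Pro\Ind(\Cc)$ is stable — displays the fibre sequence $\operatorname{fib}(P\to P_{\alpha_0})\to X\to X_0$, a lattice of $X$. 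That the lattices of $X$ form an $(\infty,1)$-category is formal: they are the objects of the full subcategory of the $(\infty,1)$-category of cofibre sequences in $\Pro\Ind(\Cc)$ with middle term $X$ whose other two terms lie in $\Pro(\Cc)$ and $\Ind(\Cc)$; a morphism is a compatible map $X^p_1\to X^p_2$ over $X$. Essential smallness is best deferred to (ii).

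\emph{(ii).} Fix a lattice $X^\bullet_0=(X^p_0\to X\to X^i_0)$ (it exists by (i)) and small presentations $X^i_0\simeq\colim_{\beta\in J}(X^i_0)_\beta$ ($J$ filtered), $X^p_0\simeq\lim_{\alpha\in A}(X^p_0)_\alpha$ ($A$ cofiltered), all terms in $\Cc$. For $\beta\in J$ put $\lambda_\beta:=\bigl(X\times_{X^i_0}(X^i_0)_\beta\to X\to X^i_0/(X^i_0)_\beta\bigr)$: its pro-part is an extension of $(X^i_0)_\beta\in\Cc$ by $X^p_0$, hence lies in $\Pro(\Cc)$, while its ind-part lies in $\Ind(\Cc)$, so $\lambda_\beta$ is a lattice. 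Dually, for $\alpha\in A$ put $\mu_\alpha:=\bigl(\operatorname{fib}(X^p_0\to(X^p_0)_\alpha)\to X\to X\sqcup_{X^p_0}(X^p_0)_\alpha\bigr)$, again a lattice. The key claim is that $\beta\mapsto\lambda_\beta$ is cofinal and $\alpha\mapsto\mu_\alpha$ is coinitial in the category of lattices of $X$. Indeed, given any lattice $L=(X^p\to X\to X^i)$, the composite $X^p\to X\to X^i_0$ is a map from a pro-object to an ind-object, so it factors through a finite stage and hence through $(X^i_0)_\beta$ for $\beta$ large; the induced lift $X^p\to X\times_{X^i_0}(X^i_0)_\beta$ over $X$ is a morphism $L\to\lambda_\beta$. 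Symmetrically, the composite $X^p_0\to X\to X^i$ factors through some $(X^p_0)_{\alpha_1}$, so for every $\alpha$ mapping to $\alpha_1$ in $A$ the composite $\operatorname{fib}(X^p_0\to(X^p_0)_\alpha)\to X^p_0\to X^i$ is null; hence $\operatorname{fib}(X^p_0\to(X^p_0)_\alpha)\to X$ lifts through $X^p\to X$, giving a morphism $\mu_\alpha\to L$. Since $J$ is filtered and $A$ cofiltered, and — again by the finite-stage principle — the spaces of such lifts become contractible in the colimit (resp. limit), these families are genuinely cofinal (resp. coinitial). Consequently the category of lattices is filtered and cofiltered, every lattice is sandwiched between some $\mu_\alpha$ and some $\lambda_\beta$, and essential smallness follows. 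Finally, (co)finality lets us compute over the two families: from the cofibre sequences $X\times_{X^i_0}(X^i_0)_\beta\to X\to X^i_0/(X^i_0)_\beta$ together with $\colim_\beta X^i_0/(X^i_0)_\beta\simeq0$ we get $\colim_{X^\bullet}X^p\simeq\colim_\beta\bigl(X\times_{X^i_0}(X^i_0)_\beta\bigr)\simeq X$ in $\Ind\Pro(\Cc)$; dually, from $\operatorname{fib}(X^p_0\to(X^p_0)_\alpha)\to X\to X\sqcup_{X^p_0}(X^p_0)_\alpha$ together with $\lim_\alpha\operatorname{fib}(X^p_0\to(X^p_0)_\alpha)\simeq\operatorname{fib}(X^p_0\to X^p_0)\simeq0$ we get $\lim_{X^\bullet}X^i\simeq X$ in $\Pro\Ind(\Cc)$, using that the embeddings of $\Tate_\mathrm{el}(\Cc)$ into $\Pro\Ind(\Cc)$ and $\Ind\Pro(\Cc)$ (cf. \autoref{introunivprop}) are exact.

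\emph{Main obstacle.} Everything in (i), and the bare existence of the maps $L\to\lambda_\beta$ and $\mu_\alpha\to L$ in (ii), rests on one elementary input: a map between a pro-object and an ind-object, and any null-homotopy of it, is detected at a finite stage, i.e. $\Map_{\Pro\Ind(\Cc)}(P,I)\simeq\colim_\alpha\colim_\beta\Map_{\Cc}(P_\alpha,I_\beta)$ for $P\simeq\lim_\alpha P_\alpha$ and $I\simeq\colim_\beta I_\beta$. The real difficulty is to upgrade ``the maps exist'' to ``the two families are cofinal and coinitial'' — and thence to filteredness, cofilteredness and essential smallness of the category of lattices: one must show the relevant comma categories are weakly contractible, which means controlling all of the homotopy, not just $\pi_0$, of the spaces of lifts as $\beta$ and $\alpha$ run through $J$ and $A$. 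A secondary annoyance is that the limit formula naturally lives in $\Pro\Ind(\Cc)$ and the colimit formula in $\Ind\Pro(\Cc)$, so one must keep careful track of which ambient category each diagram is interpreted in and exploit the compatible exact embeddings of $\Tate_\mathrm{el}(\Cc)$ into both.
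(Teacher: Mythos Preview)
Your argument for (i) is correct and genuinely different from the paper's. The paper proves closure under extensions by first establishing the existence of \emph{Tate diagrams} (cofiltered diagrams $K\op\to\Indu U(\Cc)$ whose transition-fibres lie in $\Cc$), using the strictification of morphisms in $\Prou U\Indu U(\Cc)$ (\autoref{strictification}); lattices are then read off from Tate diagrams. Your two-step reduction to the base case $I\to X\to P$ followed by the observation that the extension class lives in $\colim_\alpha\Map(P_\alpha,\Sigma I)$ is more direct and avoids strictification entirely. This is a real simplification.

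For (ii) the strategies diverge more substantially, and your acknowledged obstacle is a genuine gap. The paper never proves that an explicit small family is cofinal in $\lattices_\Cc(X)$. Instead it works in the larger category $\quot{\lattices_\Cc}{X}$, which \emph{does} admit finite colimits; given a finite diagram in $\lattices_\Cc(X)$ it takes the colimit there and then pulls the cone point back into $\lattices_\Cc(X)$ using \autoref{compatiblelattice}. This yields filteredness without ever needing to control the full homotopy type of a comma category. The (co)limit formulae are then obtained by an entirely separate argument: the paper identifies the embeddings $\Tateu U_{\mathrm{el}}(\Cc)\to\Prou V\Indu U(\Cc)$ and $\Tateu U_{\mathrm{el}}(\Cc)\to\Indu V\Prou U(\Cc)$ with right and left Kan extensions along $q\colon\lattices_\Cc\to\Tateu U_{\mathrm{el}}(\Cc)$ (\autoref{kanextensions}), and then shows $\lattices_\Cc(X)\hookrightarrow\quot{\lattices_\Cc}{X}$ is cofinal. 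Your route --- fix one lattice, build the families $\lambda_\beta$, $\mu_\alpha$, and deduce everything from their (co)finality --- is conceptually appealing and would give a unified proof, but the step ``the spaces of such lifts become contractible in the colimit'' is exactly where the work lies: you must identify $\Map_{\lattices_\Cc(X)}(L,\lambda_\beta)$ with the fibre of $\Map(X^p,(X^i_0)_\beta)\to\Map(X^p,X^i_0)$ and then argue that the Grothendieck construction over $J$ of these fibres has contractible total space (which does follow, since $\colim_\beta$ of the fibres is the fibre of an equivalence). Your essential-smallness claim is also incomplete as stated: being ``sandwiched'' between $\mu_\alpha$ and $\lambda_\beta$ does not by itself bound the set of isomorphism classes; one needs something like the paper's \autoref{latticesdifferbyc} (two lattices differ by an object of $\Cc$) to finish.
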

To prove \autoref{introlattices}, we first show that the category of pro-ind-objects which admit a lattice is stable by extension. This proves the first item.
Moreover, a map of between lattices of $X$ is essentially determined by an object of $\Cc$. This implies the smallness of the category of lattices. 
To show that it is furthermore filtered, we build some kind of enveloping lattice, sitting under a finite family of lattices of $X$.
Note that this is an $\infty$-categorical version of the main theorem of \cite{bgw:tate}.
Finally, to prove that $X$ is the limit of its lattices, we identify the canonical embedding $\Tate(\Cc) \to \Pro \Ind (\Cc)$ with the right Kan extension of a functor mapping a lattice $X^p \to X \to X^i$ to $X^i \in \Ind(\Cc)$.

Similarly to K-theory of exact categories, any stable and idempotent complete $(\infty,1)$-category $\Cc$ has a non-connective K-theory spectrum $\mathbb K(\Cc)$ -- see \cite{bgt:characterisationk}.
Our next result gives an $\infty$-categorical analogue to a delooping result of Saito in \cite{saito:deloop}, first conjectured by Kapranov and Previdi.
\begin{thmintro}[see \autoref{ktheorysusp}]\label{introksusp}
Let $\Cc$ be a stable and idempotent complete $(\infty,1)$-category. The non-connective K-theory of $\Tate(\Cc)$ is the suspension of that of $\Cc$:
\[
\mathbb K(\Tate(\Cc)) \simeq \Sigma \mathbb K(\Cc)
\]
\end{thmintro}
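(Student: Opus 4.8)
The plan is to realise $\mathbb K(\Tate(\Cc))$ as the cofibre of $\mathbb K(\Cc)\to 0$ by means of two Verdier localisation sequences whose middle terms vanish by an Eilenberg swindle. First reduce to the case where $\Cc$ is small: writing $\Cc$ as a filtered colimit of small stable idempotent complete subcategories, the constructions $\Ind$, $\Pro$ and $\Tate$ are compatible with such colimits (up to idempotent completion, which $\mathbb K$ does not see), and non-connective $\mathbb K$-theory commutes with filtered colimits of stable $\infty$-categories and sends exact sequences of stable $\infty$-categories to cofibre sequences of spectra (\cite{bgt:characterisationk}); since $\Sigma$ also preserves filtered colimits, the general case follows from the small one. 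Assume then $\Cc$ small. The inclusion $\Ind(\Cc)\hookrightarrow\Tate(\Cc)$ is fully faithful and exact, and $\Ind(\Cc)$ is idempotent complete (it is cocomplete, in particular admits sequential colimits) and closed under retracts in $\Tate(\Cc)\subseteq\Pro\Ind(\Cc)$; as $\mathbb K$ is a localising invariant, and is moreover insensitive to idempotent completion, we obtain a fibre sequence
\[
\mathbb K(\Ind(\Cc))\to\mathbb K(\Tate(\Cc))\to\mathbb K(\Tate(\Cc)/\Ind(\Cc)).
\]
Likewise $\Cc\hookrightarrow\Pro(\Cc)$ is the inclusion of a retract-closed full stable subcategory, yielding a fibre sequence $\mathbb K(\Cc)\to\mathbb K(\Pro(\Cc))\to\mathbb K(\Pro(\Cc)/\Cc)$.

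Next, $\mathbb K(\Ind(\Cc))\simeq 0\simeq\mathbb K(\Pro(\Cc))$. Indeed $\Ind(\Cc)$ is cocomplete, hence carries the exact endofunctor $\mathrm{Sw}\colon X\mapsto\coprod_{n\ge 0}X$ together with a natural equivalence $\mathrm{Sw}\simeq\id\oplus\mathrm{Sw}$; additivity of $\mathbb K$ gives $\mathbb K(\mathrm{Sw})\simeq\id+\mathbb K(\mathrm{Sw})$ on $\mathbb K(\Ind(\Cc))$, so $\id_{\mathbb K(\Ind(\Cc))}$ is null and $\mathbb K(\Ind(\Cc))\simeq 0$. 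Dually, using countable products in $\Pro(\Cc)\simeq\Ind(\Cc\op)\op$ (or the equivalence $\mathbb K(\Dd)\simeq\mathbb K(\Dd\op)$), one gets $\mathbb K(\Pro(\Cc))\simeq 0$. The two fibre sequences thus collapse to equivalences $\mathbb K(\Tate(\Cc))\simeq\mathbb K(\Tate(\Cc)/\Ind(\Cc))$ and $\mathbb K(\Pro(\Cc)/\Cc)\simeq\Sigma\mathbb K(\Cc)$.

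It therefore remains to prove the equivalence of (idempotent completed) Verdier quotients $\Tate(\Cc)/\Ind(\Cc)\simeq\Pro(\Cc)/\Cc$, and this is the real content. The inclusion $\Pro(\Cc)\hookrightarrow\Tate(\Cc)$ kills $\Cc\subseteq\Ind(\Cc)$, hence induces an exact functor $\bar F\colon\Pro(\Cc)/\Cc\to\Tate(\Cc)/\Ind(\Cc)$. For a quasi-inverse I would use the lattice theory of \autoref{introlattices}: every elementary Tate object $X$ has a small, filtered and cofiltered category of lattices $X^\bullet=(X^p\to X\to X^i)$, and any morphism of lattices is ``controlled by an object of $\Cc$'', so the image of $X^p$ in $\Pro(\Cc)/\Cc$ is independent of the chosen lattice; upgrading this assignment to an exact functor $\Phi\colon\Tate(\Cc)\to\Pro(\Cc)/\Cc$ — along the lines of the Kan-extension description of $\Tate(\Cc)\hookrightarrow\Pro\Ind(\Cc)$ used to prove \autoref{introlattices} — one sees that $\Phi$ annihilates $\Ind(\Cc)$, since $0\to X\to X$ is a lattice of any ind-object $X$, so $\Phi$ descends to $\bar\Phi\colon\Tate(\Cc)/\Ind(\Cc)\to\Pro(\Cc)/\Cc$. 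Then $\bar F\circ\bar\Phi\simeq\id$, because in $\Tate(\Cc)/\Ind(\Cc)$ the lattice sequence identifies $X$ with $X^p$ (its cofibre $X^i$ is an ind-object), and $\bar\Phi\circ\bar F\simeq\id$, because $P\to P\to 0$ is a lattice of any $P\in\Pro(\Cc)$. Combining everything,
\[
\mathbb K(\Tate(\Cc))\simeq\mathbb K(\Tate(\Cc)/\Ind(\Cc))\simeq\mathbb K(\Pro(\Cc)/\Cc)\simeq\Sigma\mathbb K(\Cc).
\]

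The formal parts — the two localisation sequences, the swindles, and the reduction to small $\Cc$ — are straightforward once one invokes that non-connective $\mathbb K$-theory is a localising invariant commuting with filtered colimits. The main obstacle is the construction of $\Phi$ as a coherent exact functor, rather than merely an assignment on objects, together with the verification that the unit and counit above are natural equivalences; this is precisely where the full strength of \autoref{introlattices} — the functorial ``enveloping lattice'' and the identification of the embedding into $\Pro\Ind(\Cc)$ as a Kan extension — must be brought to bear.
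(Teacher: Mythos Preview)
Your overall architecture matches the paper's (and Saito's): two Verdier localisation sequences whose middle K-theory terms vanish by an Eilenberg swindle, reducing everything to an identification of quotients. Two points of comparison are worth recording.

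First, you use the dual pair of sequences: $\Ind(\Cc)\to\Tate(\Cc)\to\Tate(\Cc)/\Ind(\Cc)$ together with $\Cc\to\Pro(\Cc)\to\Pro(\Cc)/\Cc$, whereas the paper uses $\Pro(\Cc)\to\Tate(\Cc)\to\Tate(\Cc)/\Pro(\Cc)$ together with $\Cc\to\Ind(\Cc)\to\Ind(\Cc)/\Cc$. This is harmless: the two quotient identifications are interchanged by the self-duality $\Tate(\Cc\op)\simeq\Tate(\Cc)\op$, so either yields the other. (Your reduction to small $\Cc$ is also not needed in the paper's universe set-up, where $\Cc$ is already assumed $\mathbb V$-small.)

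Second, and more substantively, the proof of the quotient equivalence is genuinely different. You propose an explicit quasi-inverse $\Phi$, sending a Tate object to the pro-part of any lattice and invoking \autoref{introlattices} to see this is well defined in $\Pro(\Cc)/\Cc$. The paper instead $\Indu V$-completes the entire square to land among presentable stable categories, where the comparison functor $p\colon\Indu V(\Ind(\Cc)/\Cc)\to\Indu V(\Tate(\Cc)/\Pro(\Cc))$ automatically has a right adjoint $q$; it then checks directly that $p$ is fully faithful and $q$ is conservative, both of which are pointwise conditions verifiable by elementary manipulations of mapping spaces (the first reduces to showing that a certain functor kills objects with vanishing restriction to $\Cc$; the second to the fact that $\Tate(\Cc)$ is generated under finite limits and retracts by $\Ind(\Cc)$ and $\Pro(\Cc)$). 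This completely sidesteps the obstacle you correctly flag --- promoting $\Phi$ to a coherent exact functor and verifying the unit and counit --- at the price of some presentable-category bookkeeping. Your route is more geometric and makes the role of lattices explicit, but note that $\Pro(\Cc)/\Cc$ has no filtered colimits and left Kan extensions need not be exact, so carrying it out rigorously would in practice force you through a cocomplete envelope anyway, much as the paper does.
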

The proof of the above theorem is strongly inspired by that of Saito in the case of exact categories. We first show that the quotients
\[
\quot{\Ind(\Cc)}{\Cc} \hspace{5mm} \text{and} \hspace{5mm} \quot{\Tate(\Cc)}{\Pro(\Cc)}
\]
in stable and idempotent complete $(\infty,1)$-categories are equivalent. The non-connective K-theory functor then preserves exact sequences of stable and idempotent complete $(\infty,1)$-categories -- see \cite{bgt:characterisationk}.

Our last result concerns the definition of a determinant map from the (algebraic) connective K-theory of Tate objects into the suspension of the Picard moduli space. This question has been studied in the context of exact categories by Osipov and Zhu in \cite{osipovzhu:categorical}.
Let us denote by $\mathrm K^{\Perf}$ the moduli space of K-theory of perfect complexes -- ie the presheaf on affine schemes
\[
A \mapsto \mathrm K^{\Perf}(A) = \mathrm K(\Perf(A))
\]
We will prove the following statement in the context of derived algebraic geometry\footnote{see below.}. 
\begin{thmintro}[see \autoref{detclass}]\label{introdet}
Let $\mathrm K^{\Tate}$ denote the presheaf $A \mapsto \mathrm K(\Tate(\Perf(A)))$. Let $\mathrm K(\Gm,2)$ denote the Eilenberg-Maclane classifying stack.
The determinant $\Det \colon \mathrm K^{\Perf} \to \B\Gm$ induces a morphism
\[
\Det \colon \mathrm K^{\Tate} \to \mathrm K(\Gm,2)
\]
In particular, any Tate object $E$ over some $X$ induces a determinantal class $[\Det_E] \in \homol^2(X,\Oo_X^{\times})$.
\end{thmintro}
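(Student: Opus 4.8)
The plan is to obtain $\Det\colon\mathrm K^{\Tate}\to\mathrm K(\Gm,2)$ by delooping the determinant of perfect complexes once, the extra delooping being supplied by the suspension formula of \autoref{ktheorysusp}.

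I would begin by upgrading the source determinant to a morphism of grouplike $E_\infty$-objects. The determinant of perfect complexes in derived algebraic geometry is multiplicative for cofibre sequences — a cofibre sequence $E'\to E\to E''$ yields a coherent identification $\det E\simeq\det E'\otimes\det E''$ of (graded) line bundles — so, forgetting the grading, it descends from $\Perf(A)^{\simeq}$ to the connective $K$-theory space and refines to a morphism of presheaves of grouplike $E_\infty$-spaces (equivalently, of presheaves of connective spectra), natural in $A$: this is $\Det\colon\mathrm K^{\Perf}\to\B\Gm$. Viewing $\B\Gm$ as a grouplike $E_\infty$-object with classifying object $\B^2\Gm=\mathrm K(\Gm,2)$ and applying the delooping functor once, one gets $\B\Det\colon\B\mathrm K^{\Perf}\to\mathrm K(\Gm,2)$, where $\B\mathrm K^{\Perf}\simeq\Sigma\mathrm K^{\Perf}$ is again connective since $\mathrm K^{\Perf}$ is.

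Next I would identify $\B\mathrm K^{\Perf}$ with a piece of $\mathrm K^{\Tate}$. The localization sequences $\Perf(A)\to\Ind(\Perf(A))\to\quot{\Ind(\Perf(A))}{\Perf(A)}$ and $\Pro(\Perf(A))\to\Tate(\Perf(A))\to\quot{\Tate(\Perf(A))}{\Pro(\Perf(A))}$ are natural in $A$; the Eilenberg swindle kills $\mathbb K(\Ind(\Perf(A)))$ (countable coproducts) and $\mathbb K(\Pro(\Perf(A)))$ (countable products); and the two Verdier quotients are identified along the way in the proof of \autoref{ktheorysusp}. As non-connective $K$-theory carries Verdier sequences to fibre sequences \cite{bgt:characterisationk}, this produces a natural equivalence $\mathbb K(\Tate(\Perf(A)))\simeq\Sigma\mathbb K(\Perf(A))$; taking connective covers, and using that $\mathrm K^{\Perf}$ and $\mathrm K^{\Tate}$ are precisely these connective covers (both $\Perf(A)$ and $\Tate(\Perf(A))$ being idempotent complete), identifies $\B\mathrm K^{\Perf}=\Sigma\mathrm K^{\Perf}$ with the $1$-connective cover $\tau_{\geq 1}\mathrm K^{\Tate}$ of the Tate $K$-theory, the two presheaves agreeing away from $\pi_0$ — the discrepancy being $\mathrm K_{-1}$ of the base, which is $\mathrm K_0(\Tate(\Perf(A)))$. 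I would then define $\Det\colon\mathrm K^{\Tate}\to\mathrm K(\Gm,2)$ by extending $\B\Det$ along the canonical map $\tau_{\geq 1}\mathrm K^{\Tate}\to\mathrm K^{\Tate}$.

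For the last assertion, a Tate object $E$ over $X$ is a section of $\Tate(\Perf(-))$ over $X$, hence determines a point of $\Map(X,\mathrm K^{\Tate})$; its image under $\Det$ is a point of $\Map(X,\mathrm K(\Gm,2))$, whose connected component is the class $[\Det_E]\in\pi_0\Map(X,\mathrm K(\Gm,2))\simeq\homol^2(X,\Oo_X^{\times})$. (Concretely, by \autoref{latticesexist} and \autoref{colimoflattices}, this should be the determinantal gerbe of $E$, assembled from the determinant line bundles of the lattices of $E$ and the comparison isomorphisms between them, as in the exact-category construction of \cite{osipovzhu:categorical}.) The step I expect to be the main obstacle is the extension of $\B\Det$ across $\tau_{\geq 1}\mathrm K^{\Tate}\to\mathrm K^{\Tate}$: because $\pi_0\mathrm K^{\Tate}$ — the negative $K$-theory $\mathrm K_{-1}$ of the base, equivalently $\mathrm K_0$ of the Tate category — need not vanish, this extension is not purely formal, and one must verify that the associated obstruction, living in a mapping space relating $\mathrm K_{-1}$ and $\mathrm K(\Gm,2)$, dies, or else restrict to the virtual-rank-zero part of $K$-theory. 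A secondary, by now standard, point is pinning down the coherent multiplicativity of the determinant of perfect complexes that makes $\Det\colon\mathrm K^{\Perf}\to\B\Gm$ a morphism of grouplike $E_\infty$-objects in the first place.
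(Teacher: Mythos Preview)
Your overall strategy matches the paper's: deloop $\Det$ to $\B\Det\colon \B\mathrm K^{\Perf}\to \mathrm K(\Gm,2)$, then use \autoref{ktheorysusp} to identify $\B\mathrm K^{\Perf}$ with $\tau_{\geq 1}\mathrm K^{\Tate}$. You also correctly locate the only nontrivial step, namely extending $\B\Det$ across the inclusion $\tau_{\geq 1}\mathrm K^{\Tate}\to \mathrm K^{\Tate}$, where the cokernel is $\pi_0\mathrm K^{\Tate}\simeq \mathrm K_{-1}^{\Perf}$.

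The gap is that you do not resolve this step. Your two suggested ways out do not work as stated: the obstruction to extending a map $\tau_{\geq 1}X\to \B^2\Gm$ over $X$ lives in maps from $(\pi_0 X)[-1]$ into $\B^2\Gm$, and there is no formal reason for this to vanish when $\pi_0 X=\mathrm K_{-1}(A)$ is nonzero; and ``restricting to the virtual-rank-zero part'' is precisely staying on $\tau_{\geq 1}\mathrm K^{\Tate}$, so it does not produce a map out of $\mathrm K^{\Tate}$. The paper's missing idea is \autoref{k0vanishes}: one shows that $\mathrm K_0^{\Tate}\simeq \mathrm K_{-1}^{\Perf}$ vanishes \emph{Nisnevich-locally} (reduce to Henselian $A$, use the Bass exact sequence and invariance of $\mathrm K_0$ under passing to $\pi_0(A)$ to reduce to the classical case, then cite \cite{drinfeld:tate}). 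Hence $\B\mathrm K^{\Perf}\to \mathrm K^{\Tate}$ is a Nisnevich-local equivalence, and since the target $\mathrm K(\Gm,2)$ is already a stack, one simply defines
\[
\mathrm K^{\Tate}\longrightarrow (\mathrm K^{\Tate})^{+}\simeq (\B\mathrm K^{\Perf})^{+}\xrightarrow{\ (\B\Det)^{+}\ }\mathrm K(\Gm,2).
\]
In other words, the extension problem is not solved at the level of presheaves but dissolved by stackification; this is the ingredient your argument is missing.
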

We have an exact sequence
\[
\B \mathrm K^{\Perf} \to \mathrm K^{\Tate} \to \mathrm K^{\Tate}_0
\]
To prove \autoref{introdet}, we show that $\mathrm K^{\Tate}_0$ vanishes Nisnevich locally. We can then use the determinant map $\mathrm K^{\Perf} \to \B \Gm$ to define the announced map.

Note that the above theorem applies to any additive invariant, not only to the determinant (see the last section).

\paragraph*{Applications:}
Tate objects naturally appear when considering local fields: the field of Laurent series $k(\!(t)\!)$ is a Tate vector space of $k$.
The first application the author has in mind concerns the study of formal loop spaces, as defined in \cite{kapranovvasserot:loop1}. To any scheme $X$ of finite type, we associate its formal loop space: roughly speaking, it is an ind-pro-scheme $\mathcal L^1(X)$ representing the functor $A \mapsto X(A(\!(t)\!))$.

Examples of stable $(\infty,1)$-categories naturally appear in derived algebraic geometry. Derived algebraic geometry allows us to study ill-behaved geometric situations. The most emblematic examples are the study of non-generic intersections and of quotients by a wild action.
In this context, the category of quasi-coherent complexes becomes a central object. This category is actually a stable and idempotent complete $(\infty,1)$-category. This core example of such a category motivates the results of this article.

For instance, in \cite{hennion:floops}, the author develops a higher dimensional analogue $\mathcal L^d(X)$ of the formal loop space. This new geometrical object is actually a derived stack. In this context, the derived category of $\mathcal L^d(X)$ is a stable $(\infty,1)$-category.
Exact categories are not enough here. In \cite{hennion:floops}, it is proven that in some cases the tangent of $\mathcal L^d(X)$ is a Tate module.
Our \autoref{introdet} then defines a determinantal anomaly for those higher dimensional formal loop spaces, generalising a result of \cite{kapranovvasserot:loop2}.
Moreover, the nice properties of Tate objects together with derived symplectic geometry allows us to define symplectic structure on infinite dimension algebraic objects.

The construction we provide $\Cc \mapsto \Tate(\Cc)$ can of course be iterated. We would then obtain some categories $\Tate^n(\Cc)$ for any integer $n$. Those kind of construction appeared with Beilinson's adèles and local fields in several variables.

Another source of example is topology. If $X$ is a space, then the category of spectra over $X$ is a stable $(\infty,1)$-category. The \autoref{introksusp} then gives a shifted version of Waldhausen's K-theory of $X$. Again, this example could not be studied using only exact categories.

\paragraph*{Related work:}
Literature on Tate objects is flourishing. Let us cite here the work of Drinfeld \cite{drinfeld:tate}, Previdi \cite{previdi:thesis}, Saito \cite{saito:deloop}, Osipov and Zhu \cite{osipovzhu:categorical} and more recently Bräunling, Gröchenig and Wolfson \cite{bgw:tate}. The author has also been told that Barwick, Gröchenig and Wolfson are currently working on a theory of Tate objects in exact $(\infty,1)$-categories.

\paragraph*{Acknowledgements:}
The author would like to thank Bertrand Toën, Marco Robalo, Michael Gröchenig and Damien Calaque for the many discussions we had about the content of this article.

\section{Preliminaries}

This first section contains $\infty$-categorical preliminaries. Most of the content comes from \cite{lurie:htt}. We will also define here a few notations.

\paragraph*{Notations:}
Throughout this article, we will fix two universes $\mathbb U \in \mathbb V$.

Let us first set a few notations, borrowed from \cite{lurie:htt}.
\begin{itemize}
\item We will denote by $\inftyCatu U$ the $(\infty,1)$-category of $\mathbb U$-small $(\infty,1)$-categories -- see \cite[3.0.0.1]{lurie:htt};
\item Let $\PresLeftu U$ denote the $(\infty,1)$-category of $\mathbb U$-presentable (and thus $\mathbb V$-small) $(\infty,1)$-categories with left adjoint functors -- see \cite[5.5.3.1]{lurie:htt};
\item The symbol $\sSets$ will denote the $(\infty,1)$-category of $\mathbb U$-small spaces;
\item For any $(\infty,1)$-categories $\Cc$ and $\Dd$ we will write $\Fct(\Cc,\Dd)$ for the $(\infty,1)$-category of functors from $\Cc$ to $\Dd$ (see \cite[1.2.7.3]{lurie:htt}). The category of presheaves will be denoted $\presh(\Cc) = \Fct(\Cc\op, \sSets)$.
\item For any $(\infty,1)$-category $\Cc$ and any objects $c$ and $d$ in $\Cc$, we will denote by $\Map_{\Cc}(c,d)$ the space of maps from $c$ to $d$.
\end{itemize}

The following theorem is a concatenation of results from Lurie.
\begin{thm}[Lurie]\label{indu-thm}
Let $\Cc$ be a $\mathbb V$-small $(\infty,1)$-category.
There is an $(\infty,1)$-category $\Indu U(\Cc)$ and a functor $j \colon \Cc \to \Indu U(\Cc)$ such that
\begin{enumerate}
\item The $(\infty,1)$-category $\Indu U(\Cc)$ is $\mathbb V$-small;
\item The $(\infty,1)$-category $\Indu U(\Cc)$ admits $\mathbb U$-small filtered colimits and is generated by $\mathbb U$-small filtered colimits of objects in $j(\Cc)$;
\item The functor $j$ is fully faithful and preserves finite limits and finite colimits which exist in $\Cc$;
\item For any $c \in \Cc$, its image $j(c)$ is $\mathbb U$-small compact in $\Indu U (\Cc)$;
\item For every $(\infty,1)$-category $\Dd$ with every $\mathbb U$-small filtered colimits, the functor $j$ induces an equivalence
\[
 \Fct^{\mathbb U\mathrm{-c}}(\Indu U(\Cc), \Dd) \to^\sim \Fct(\Cc,\Dd)
\]
where $\Fct^{\mathbb U \mathrm{-c}}(\Indu U(\Cc), \Dd)$ denote the full subcategory of $\Fct(\Indu U(\Cc),\Dd)$ spanned by functors preserving $\mathbb U$-small filtered colimits.
\item If $\Cc$ is $\mathbb U$-small and admits all finite colimits then $\Indu U(\Cc)$ is $\mathbb U$-presentable;
\end{enumerate}
\end{thm}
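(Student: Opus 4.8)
The plan is to assemble $\Indu U(\Cc)$ and verify the six properties by bootstrapping from Lurie's treatment of ind-objects in \cite{lurie:htt}. The starting point is the construction: set $\Indu U(\Cc)$ to be the full subcategory of $\presh(\Cc) = \Fct(\Cc\op, \sSets)$ spanned by those presheaves that are $\mathbb U$-small filtered colimits of representables, exactly as in \cite[5.3.5]{lurie:htt}, with $j$ the Yoneda embedding. The only subtlety compared to the textbook case is the bookkeeping of the two universes $\mathbb U \in \mathbb V$: since $\Cc$ is only assumed $\mathbb V$-small, $\presh(\Cc)$ lives in a still larger universe, but the full subcategory of $\mathbb U$-small filtered colimits of representables is again $\mathbb V$-small because each such object is determined by a $\mathbb U$-small diagram in the $\mathbb V$-small category $\Cc$, and there is only a $\mathbb V$-small collection of such diagrams up to equivalence. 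This gives item (i).

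Next I would dispatch items (ii), (iii), (iv) in order. Item (ii) is immediate from the construction: $\mathbb U$-small filtered colimits of representables are closed under $\mathbb U$-small filtered colimits inside $\presh(\Cc)$ (a filtered colimit of filtered colimits is a filtered colimit, and one checks the index category can be taken $\mathbb U$-small), and by definition these objects generate. Item (iii) — full faithfulness of $j$ and preservation of finite limits and of whatever finite colimits exist in $\Cc$ — is Yoneda plus the fact that representables are not affected by the colimit completion on the nose; the preservation of finite limits is formal since limits in presheaves are computed pointwise, and preservation of existing finite colimits follows from the universal property argument in \cite[5.3.5.7 and 5.3.5.14]{lurie:htt}. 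Item (iv), $\mathbb U$-small compactness of $j(c)$, is \cite[5.3.5.5]{lurie:htt}: mapping out of a representable commutes with $\mathbb U$-small filtered colimits by Yoneda.

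The heart of the matter is item (v), the universal property, and it is here that I expect the main work. The claim is that restriction along $j$ gives an equivalence $\Fct^{\mathbb U\text{-c}}(\Indu U(\Cc), \Dd) \to^\sim \Fct(\Cc, \Dd)$ for every $\Dd$ admitting $\mathbb U$-small filtered colimits. The strategy is the standard one: given $F \colon \Cc \to \Dd$, left Kan extend along $j$ to get $\widetilde F \colon \Indu U(\Cc) \to \Dd$, and check (a) that $\widetilde F$ preserves $\mathbb U$-small filtered colimits, (b) that $\widetilde F \circ j \simeq F$, and (c) that any filtered-colimit-preserving functor is recovered this way, i.e. the unit and counit of the (restriction, left Kan extension) adjunction are equivalences on the relevant subcategories. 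The technical obstacle is that the left Kan extension a priori only exists as a functor to a cocompletion of $\Dd$; one must argue that because every object of $\Indu U(\Cc)$ is a $\mathbb U$-small \emph{filtered} colimit of representables and $\Dd$ has exactly those colimits, the pointwise formula for $\widetilde F$ lands in $\Dd$ and is computed by a $\mathbb U$-small filtered colimit. This is precisely \cite[5.3.5.10]{lurie:htt}, and I would cite it after setting up the diagram; the subtlety to watch is again universe size — one needs the comma categories indexing the Kan extension to be $\mathbb U$-small filtered, which holds by item (iv) and a cofinality argument (\cite[5.3.5.11--5.3.5.13]{lurie:htt}).

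Finally, item (vi): if $\Cc$ is $\mathbb U$-small and has all finite colimits, then $\Indu U(\Cc)$ is $\mathbb U$-presentable. Here $\Indu U(\Cc)$ is $\mathbb U$-small-cocomplete (it has $\mathbb U$-small filtered colimits by (ii) and finite colimits because $\Cc$ does and $j$ preserves them, and finite plus filtered colimits generate all $\mathbb U$-small colimits), it is generated under $\mathbb U$-small filtered colimits by the essentially $\mathbb U$-small family $j(\Cc)$ of $\mathbb U$-compact objects by (iv), and it is accessible; so it satisfies the recognition criterion for $\mathbb U$-presentability in \cite[5.5.1.1]{lurie:htt}. This is \cite[5.5.3.8]{lurie:htt}. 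I do not anticipate difficulty here beyond correctly invoking the recognition theorem. Throughout, the one genuinely non-mechanical point is keeping the $\mathbb U$-versus-$\mathbb V$ distinction consistent in items (i) and (v); everything else is a careful citation of Lurie's ind-completion machinery with the size parameter set to $\mathbb U$.
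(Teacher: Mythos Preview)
Your proposal is correct and follows essentially the same route as the paper: define $\Indu U(\Cc)$ as the full subcategory of $\presh(\Cc)$ spanned by $\mathbb U$-small filtered colimits of representables and then invoke Lurie's results. The paper phrases the construction via the $\presh^{\mathcal K}_{\emptyset}$ framework of \cite[5.3.6.2]{lurie:htt} (with $\mathcal K$ the class of $\mathbb U$-small filtered simplicial sets) rather than working directly from \cite[\S5.3.5]{lurie:htt}, and for item~(vi) it cites \cite[6.3.1.10]{lurie:halg} rather than the HTT presentability criteria you invoke; but these are differences of packaging and citation, not of mathematical content.
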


\begin{proof}
Let us use the notations of \cite[5.3.6.2]{lurie:htt}. Let $\mathcal K$ denote the collection of $\mathbb U$-small filtered simplicial sets. We then set $\Indu U (\Cc) = \presh^\mathcal K _\emptyset(\Cc)$. Recall that $\Indu U(\Cc)$ is then the full subcategory of $\presh(\Cc)$ generated by $\mathbb U$-small filtered colimits of diagrams in $\Cc$.
It satisfies the required properties because of \loccit 5.3.6.2 and 5.5.1.1. We also need tiny modifications of the proofs of \loccit 5.3.5.14 and 5.3.5.5. The last item is proved in \cite[6.3.1.10]{lurie:halg}.
\end{proof}

Lurie proved in \cite[5.3.5.15]{lurie:htt} that any map $c \to d \in \Indu U (\Cc)$ is a colimit of a $\mathbb U$-small filtered diagram $K \to \Fct(\Delta^1, \Cc)$. We will need afterwards the following small refinement of this statement, inspired by \cite[3.9]{bgw:tate}
\begin{prop}[Strictification of morphisms]\label{strictification}
Let $\Cc$ be a $\mathbb V$-small $(\infty,1)$-category. Let $f \colon c \to d$ be a morphism in $\Indu U(\Cc)$. Let also $\bar c \colon K \to \Cc$ and $\bar d \colon L \to \Cc$ be $\mathbb U$-small filtered diagrams of whom respectively $c$ and $d$ are colimits in $\Indu U(\Cc)$.
There exists a $\mathbb U$-small filtered diagram $\bar f \colon J \to \Fct(\Delta^1, \Cc)$ and a commutative diagram
\[
\mymatrix{
K \ar[d]^{\bar c} & J \ar[r]^{p_L} \ar[l]_{p_K} \ar[d]^{\bar f} & L \ar[d]^{\bar d} \\
\Cc & \Fct(\Delta^1, \Cc) \ar[r]^-{\ev_1} \ar[l]_-{\ev_0} & \Cc
}
\]
such that both maps $p_K$ and $p_L$ are cofinal, and such that $f$ is the colimit of $\bar f$.
\end{prop}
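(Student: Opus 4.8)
The plan is to realise $J$ as an iterated comma $\infty$-category built from the canonical compact-slice presentations of $c$, $d$ and $f$. The basic input I would establish first is a \emph{slice-presentation lemma}: if $\Dd$ is $\mathbb V$-small, $e$ is an object of $\Indu U(\Dd)$ and $\bar e\colon M\to\Dd$ is a $\mathbb U$-small filtered diagram with an equivalence $\colim_M j\circ\bar e\simeq e$, then the induced functor $M\to\Dd_{/e}:=\Dd\times_{\Indu U(\Dd)}\Indu U(\Dd)_{/e}$ is cofinal. To prove it one uses the cofinality criterion: for an object $\xi=(x,\, jx\to e)$ of $\Dd_{/e}$, the comma $\infty$-category $M\times_{\Dd_{/e}}(\Dd_{/e})_{\xi/}$ has, by unwinding, the homotopy type of $\colim_{m\in M}$ of the fibres over the structure point of the maps $\Map_{\Dd}(x,\bar e(m))\to\Map_{\Indu U(\Dd)}(jx,e)$; since $jx$ is $\mathbb U$-small compact and $M$ is filtered, filtered colimits commute with this fibre and the space becomes the fibre of an identity map, hence is contractible. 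Applied to $(\Dd,e)=(\Cc,c)$ and $(\Cc,d)$ this gives cofinal functors $K\to\Cc_{/c}$ and $L\to\Cc_{/d}$. By \cite[5.3.5.15]{lurie:htt} there is a $\mathbb U$-small filtered $J_0$ with $\bar f_0\colon J_0\to\Fct(\Delta^1,\Cc)$ and $\colim\bar f_0\simeq f$, so $f$ lies in the essential image of $\Indu U(\Fct(\Delta^1,\Cc))\to\Fct(\Delta^1,\Indu U(\Cc))$ and the lemma applies to $(\Fct(\Delta^1,\Cc),f)$: the category $\Lambda:=(\Fct(\Delta^1,\Cc))_{/f}$ is filtered, the colimit over $\Lambda$ of the tautological functor $\Lambda\to\Fct(\Delta^1,\Cc)$ is $f$, and $J_0\to\Lambda$ is cofinal. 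Source- and target-evaluation, using the compatibility squares to produce the maps to $c$ and $d$, give functors $\ev_0\colon\Lambda\to\Cc_{/c}$ and $\ev_1\colon\Lambda\to\Cc_{/d}$.

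I would then set
\[
J\ :=\ K\times_{\Cc_{/c}}(\Cc_{/c})^{\Delta^1}\times_{\Cc_{/c}}\Lambda\times_{\Cc_{/d}}(\Cc_{/d})^{\Delta^1}\times_{\Cc_{/d}}L,
\]
the two bridging copies of $(\Cc_{/c})^{\Delta^1}$ and $(\Cc_{/d})^{\Delta^1}$ being attached by $(\ev_0,\ev_1)$, so that an object of $J$ is a datum $(k\in K,\ \lambda=(g\colon x\to y)\in\Lambda,\ \ell\in L,\ \alpha\colon\bar c(k)\to x\ \text{over}\ c,\ \beta\colon y\to\bar d(\ell)\ \text{over}\ d)$. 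Let $\bar f\colon J\to\Fct(\Delta^1,\Cc)$ send such a datum to the composite arrow $\bar c(k)\xrightarrow{\alpha}x\xrightarrow{g}y\xrightarrow{\beta}\bar d(\ell)$, and let $p_K,p_L$ be the two projections; then $\ev_0\circ\bar f\simeq\bar c\circ p_K$ and $\ev_1\circ\bar f\simeq\bar d\circ p_L$ tautologically, which is the commutative diagram required. Granting that $p_K$ and $p_L$ are cofinal, the colimit statement follows: $\colim(\ev_0\circ\bar f)\simeq\colim_K\bar c\simeq c$ and $\colim(\ev_1\circ\bar f)\simeq d$; the colimit of the arrows $\alpha$ (resp. $\beta$) is a self-map of $c$ (resp. $d$) compatible with all structure maps to $c$ (resp. $d$), hence is the identity; and the colimit of the middle arrows $g$ is, since $J\to\Lambda$ is cofinal, the colimit over $\Lambda$ of the tautological functor, namely $f$; so $\colim\bar f\simeq f$. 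Finally, although $J$ is a priori only $\mathbb V$-small, it is filtered and — $c$, $d$, $f$ being $\mathbb U$-small filtered colimits — it receives a cofinal functor from a $\mathbb U$-small filtered category, which we use in its place.

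The heart of the matter is the cofinality of $p_K$ and $p_L$; the bridging copies of $(\Cc_{/c})^{\Delta^1}$ and $(\Cc_{/d})^{\Delta^1}$ are inserted precisely because the strict fibre product $K\times_{\Cc_{/c}}\Lambda\times_{\Cc_{/d}}L$ would not have cofinal projections, pullbacks of cofinal functors not being cofinal in general. I would factor $p_K$ (and, symmetrically, $p_L$) through the iterated comma and repeatedly invoke the calculus of cofinal functors of \cite[\S 4.1]{lurie:htt}: (i) for any $\Ee'\to\Ee$, the coslice projection $\Ee'\times_{\ev_0,\Ee}\Ee^{\Delta^1}\to\Ee'$ is right adjoint to $x\mapsto(x,\mathrm{id})$, hence cofinal; (ii) target-evaluation $\Ee^{\Delta^1}\to\Ee$, and more generally $\Ee'\times_{\ev_0,\Ee}\Ee^{\Delta^1}\xrightarrow{\ev_1}\Ee$, is (equivalent to) a coCartesian fibration, hence smooth, so cofinal functors pull back to cofinal functors along it; (iii) source-evaluation $\Ee^{\Delta^1}\to\Ee$ is right adjoint to the constant-diagram functor, hence cofinal; (iv) cofinal functors are stable under composition. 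Combining these with the cofinality of $K\to\Cc_{/c}$, $L\to\Cc_{/d}$ and $J_0\to\Lambda$ furnished by the slice lemma yields the claim. The step I expect to demand the most care is (ii): checking that the relevant evaluation maps out of these fibre products are smooth even though the fibre products themselves need not be inner fibrations; it is the $\infty$-categorical counterpart of the elementary reindexing argument used for exact categories in \cite[3.9]{bgw:tate}.
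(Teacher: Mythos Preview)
Your approach is genuinely different from the paper's and mostly sound, but there is a real gap in the cofinality argument for $p_L$. Your ingredient (i) says that the projection $\Ee'\times_{\ev_0,\Ee}\Ee^{\Delta^1}\to\Ee'$ is a \emph{right} adjoint, hence cofinal. This is correct, and together with (ii)--(iv) it does yield the cofinality of $p_K$: one factors $J\to K$ as $J\to A\to K$ with $A=K\times_{\Cc_{/c},\ev_0}(\Cc_{/c})^{\Delta^1}$, uses (i) for $A\to K$, and then smoothness of $\ev_1\colon A\to\Cc_{/c}$ together with the cofinality of the remaining factor $C\to\Cc_{/c}$ (which in turn uses that $\ev_0\colon\Lambda\to\Cc_{/c}$ is a sliced right adjoint). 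But on the $L$ side your construction matches $L$ with the $\ev_1$-end of $(\Cc_{/d})^{\Delta^1}$, because $\beta$ points \emph{towards} $\bar d(\ell)$. The projection $(\Cc_{/d})^{\Delta^1}\times_{\ev_1,\Cc_{/d}}L\to L$ is then a \emph{left} adjoint (its right adjoint is $\ell\mapsto(\id_{\bar d(\ell)},\ell)$), hence coinitial rather than cofinal. None of (i)--(iv) bridges this: smoothness of the various $\ev_1$'s only gives you that $J\to D$ is cofinal for the complementary factor $D$, not that $J\to L$ is. The asymmetry is intrinsic to your $J$, since reversing all arrows swaps the roles of $p_K$ and $p_L$ and turns cofinal into coinitial. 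Relatedly, your assertion that $J$ is filtered is not justified by what precedes it; a cofinal functor into a filtered category does not force the source to be filtered.

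For comparison, the paper avoids this asymmetry by a much more direct construction. It first replaces $K$ and $L$ by filtered posets (\cite[4.3.2.14]{lurie:htt}), then takes
\[
J' \;=\; \bigl(K\times_{\Cc}\Fct(\Delta^1,\Cc)\times_{\Cc}L\bigr)\;\times_{\Fct(\Delta^1,\Indu U(\Cc))}\;\Fct(\Delta^1,\Indu U(\Cc))_{/f},
\]
i.e.\ the strict fibre product you warned against, \emph{together} with a structure map to $f$. The point is that one then proves by hand that $J'$ is filtered: given a finite diagram $P\to J'$, first extend in $K$ by filteredness, then use compactness of the new $\bar c(k)$ to factor $\bar c(k)\to c\to d$ through some $\bar d(l)$, and finally extend in $L$ above $l$. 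Once $J'$ is known to be filtered, cofinality of both projections becomes elementary. Your comma-category machinery buys a cleaner formalism on the $K$ side, but to make it go through on the $L$ side you would still need an \emph{ad hoc} filteredness argument of exactly this kind --- at which point the paper's shorter route is available.
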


\begin{proof}
Using \cite[4.3.2.14]{lurie:htt} we can assume that both $K$ and $L$ are filtered partially ordered sets.
Let us denote by $J'$ the fibre product
\[
\mymatrix{
J' \cart \ar[d] \ar[rr] && \quot{\Fct(\Delta^1, \Indu U(\Cc))}{f} \ar[d] \\
\displaystyle K \times_\Cc \Fct(\Delta^1, \Cc) \times_\Cc L \ar[r] & \Fct(\Delta^1, \Cc) \ar[r] & \Fct(\Delta^1, \Indu U(\Cc))
}
\]
Let us first prove that $J'$ is filtered.
Let $P$ be a partially ordered finite set and $P^\triangleright$ denote the partially ordered set $P \cup \{\infty\}$, where $\infty$ is a maximal element.
A morphism $P \to J'$ is the datum of a commutative diagram
\[
\mymatrix@!0@R=6mm@C=10mm{
&& P \times \{0 \} \ar[rrr]^\kappa \ar[ddr] &&& K \ar[rdd]^{\bar c} & \\
&P \times \{1\} \ar[rrr]|!{[ur];[drr]}\hole^(0.7)\lambda \ar[rrd] &&& L \ar[drr]^(0.4){\bar d} && \\
&&& P \times \Delta^1 \ar[rrr]_\psi \ar[dd] &&& \Cc \ar[dd] \\
\\
\{\infty\} \times \Delta^1 \ar[rrr] &&& P^\triangleright \times \Delta^1 \ar[rrr] &&& \Indu U(\Cc)
}
\]
Let us denote by $P_+$ the partially ordered set $P \cup \{ + \}$ where $+$ is a maximal element. Because $K$ is filtered, the map $\kappa$ extends to a morphism $\kappa' \colon P_+ \times \{0\} \to K$.
There exists $l \in L$ such that the induced map $\bar c(\kappa'(+)) \to c \to d$ factors through $\bar d(l) \to d$. Since $L$ is filtered, there is a map $\lambda' \colon P_+ \times \{1\} \to L$ extending $\lambda$.
We can moreover chose $\lambda'(+)$ greater than $l$ (ie with a map $l \to \lambda'(+)$ in $L$).
Using the map $\bar c(\kappa'(+)) \to \bar d(l) \to \bar d(\lambda'(+))$, we get a morphism $\psi' \colon P_+ \times \Delta^1 \to \Cc$ extending $\psi$, which by construction extends to $P_+^\triangleright \times \Delta^1$ -- where we set $\infty \geq +$.
This defines a morphism $P_+ \to J'$, proving that $J'$ is filtered.
Using \cite[4.3.2.14]{lurie:htt} we define $J$ to be a filtered partially ordered set with a cofinal map $J \to J'$.
Proving that the maps $J \to K$ and $J \to L$ are cofinal is now straightforward.
This also implies that the induced diagram $\bar f \colon J \to \Fct(\Delta^1, \Cc)$ has colimit $f$ in $\Indu U(\Cc)$.
\end{proof}

\begin{rmq}
Note that when $\Cc$ admits finite colimits then the category $\Indu U(\Cc)$ embeds in the $\mathbb V$-presentable category $\Indu V(\Cc)$.
\end{rmq}

\begin{df}
Let $\Cc$ be a $\mathbb V$-small $\infty$-category. We define $\Prou U(\Cc)$ as the $(\infty,1)$-category
\[
\Prou U(\Cc) = \left( \Indu U(\Cc\op) \right) \op
\]
It satisfies properties dual to those of $\Indu U(\Cc)$.
\end{df}

The following lemma is a direct consequence of results from Lurie's \cite{lurie:htt}.
\begin{lem}[Stable envelop]\label{tatification}
Let $\Cc$ be a $\mathbb V$-small pointed category with all suspensions. Let us assume that the suspension functor $\Cc \to \Cc$ is an equivalence.
There exists an $(\infty,1)$-category $\Cc^\mathrm{st}$ with a map $j \colon \Cc \to \Cc^\mathrm{st}$ such that
\begin{enumerate}
\item The category $\Cc^\mathrm{st}$ is $\mathbb V$-small and stable.
\item The functor $j$ is fully faithful and preserves all limits and finite colimits which exist in $\Cc$.
\item For any stable $(\infty,1)$-category $\Dd$ the induced map
\[
\Fct^\mathrm{ex}(\Cc^\mathrm{st},\Dd) \to \Fct^\mathrm{lex}(\Cc,\Dd)
\]
between exact functors and left exact functors is an equivalence.
\item For any stable category $\Dd$ with a fully faithful functor $\Cc \to \Dd$ preserving finite colimits and limits which exist in $\Cc$, the smallest stable subcategory of $\Dd$ containing the image of $\Cc$ is equivalent to $\Cc^\mathrm{st}$.
\end{enumerate}
\end{lem}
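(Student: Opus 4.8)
The plan is to realise $\Cc^{\mathrm{st}}$ as the stable subcategory generated by $\Cc$ inside an $\infty$-category of spectral presheaves, the invertibility of $\Sigma$ being exactly what lets us form mapping spectra. Since $\Cc$ is pointed and $\Sigma\colon\Cc\to\Cc$ is an equivalence, the functor $\Map_\Cc(c,-)$ carries the pullback $X\mapsto 0\times_X 0$ to loops and $\Omega\Sigma\simeq\id$; hence for $c,d\in\Cc$ the spaces $\Map_\Cc(c,\Sigma^n d)$, $n\ge 0$, assemble into an $\Omega$-spectrum $\underline{\Map}_\Cc(c,d)$, functorial in both variables. Writing $\mathbf{Sp}$ for the $\infty$-category of spectra, this produces a functor $j\colon\Cc\to\Fct(\Cc\op,\mathbf{Sp})$, $d\mapsto\underline{\Map}_\Cc(-,d)$, whose target is stable with limits and colimits computed pointwise. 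I would take $\Cc^{\mathrm{st}}$ to be the smallest stable full subcategory of $\Fct(\Cc\op,\mathbf{Sp})$ containing the essential image of $j$; being generated from the $\mathbb V$-small family $j(\Cc)$ by finitely iterated finite limits and colimits, it is $\mathbb V$-small.

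Next I would establish the properties of $j$. Full faithfulness follows from the Yoneda lemma after writing $j(c)$ as a sequential colimit of (de)suspended suspension spectra of representable presheaves: one gets $\Map(j(c),j(d))\simeq\lim_n\Map_\Cc(\Sigma^n c,\Sigma^n d)\simeq\Map_\Cc(c,d)$, the last step because $\Sigma$ is an equivalence. Preservation of limits is immediate, since limits in $\Fct(\Cc\op,\mathbf{Sp})$ are pointwise and each $\Sigma^n$, being an equivalence, preserves whatever limits exist in $\Cc$: the $n$-th space of $\underline{\Map}_\Cc(e,\lim_i c_i)$ is $\Map_\Cc(e,\Sigma^n\lim_i c_i)\simeq\lim_i\Map_\Cc(e,\Sigma^n c_i)$; in particular $j(0)\simeq 0$. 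Stability of $\Cc^{\mathrm{st}}$ then comes almost for free: it is pointed, admits finite limits and colimits by construction, and is closed under $X\mapsto 0\sqcup_X 0=\Sigma X$ and $X\mapsto 0\times_X 0=\Omega X$, which on the ambient stable category are mutually inverse; hence $\Sigma$ restricts to an equivalence of $\Cc^{\mathrm{st}}$, and a pointed $\infty$-category that admits finite limits and colimits and on which suspension is an equivalence is stable (see \cite[\S1.4]{lurie:halg}). This settles item~\emph{(i)} and the limit half of item~\emph{(ii)}.

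The step I expect to be the main obstacle is the remaining half of item~\emph{(ii)}: that $j$ preserves the finite colimits which happen to exist in $\Cc$. For a finite colimit cocone $\bar p$ in $\Cc$ one must show $j\circ\bar p$ is again a colimit, that is -- colimits in $\Fct(\Cc\op,\mathbf{Sp})$ being pointwise -- that $\underline{\Map}_\Cc(e,-)\colon\Cc\to\mathbf{Sp}$ carries $\bar p$ to a colimit of spectra for every $e$. Since $\underline{\Map}_\Cc(e,-)$ preserves finite limits and $\mathbf{Sp}$ is stable, it suffices to know that a finite colimit diagram in $\Cc$ is simultaneously a finite limit diagram -- concretely, that a pushout square in $\Cc$ is biCartesian. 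This is precisely where the invertibility of $\Sigma$ must enter, and establishing it -- or finding the correct replacement, $\Cc$ not yet being known to be stable -- is the genuine content of the lemma; the rest is bookkeeping.

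Finally, I would deduce items~\emph{(iii)} and \emph{(iv)} by combining standard universal properties. An exact functor $\Cc^{\mathrm{st}}\to\Dd$ restricts along $j$ to a left exact functor $\Cc\to\Dd$. Conversely, given a stable $\Dd$ and a left exact $g\colon\Cc\to\Dd$: when $\Dd$ is presentable, $g$ extends -- through the universal property of the free cocompletion $\presh(\Cc)$ \cite[5.1.5.6]{lurie:htt} and that of the stabilization $\Fct(\Cc\op,\mathbf{Sp})\simeq\mathbf{Sp}(\presh(\Cc))$ \cite[\S1.4]{lurie:halg} -- to a colimit-preserving functor $\Fct(\Cc\op,\mathbf{Sp})\to\Dd$ whose restriction to $\Cc^{\mathrm{st}}$ is exact; for an arbitrary target one instead argues directly, using that $\Cc^{\mathrm{st}}$ is generated by $j(\Cc)$ under finite limits and colimits to construct the extension and to prove its uniqueness by induction on this generation. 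These two assignments are mutually inverse, which is item~\emph{(iii)}. For item~\emph{(iv)}, apply \emph{(iii)} to a fully faithful embedding $\Cc\hookrightarrow\Dd$ as in the statement: the resulting exact functor $\Cc^{\mathrm{st}}\to\Dd$ restricts to that embedding on $j(\Cc)$, hence is fully faithful (an exact functor fully faithful on a generating family is fully faithful), and its essential image -- closed under finite limits and colimits in $\Dd$ and containing $\Cc$ -- is the smallest stable subcategory of $\Dd$ with those properties.
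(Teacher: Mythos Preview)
Your approach is genuinely different from the paper's, and the gap you flag is real --- but it is an artefact of your chosen construction, not ``the genuine content of the lemma.''

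The paper builds $\Cc^{\mathrm{st}}$ as $\presh^{\{K,\emptyset\}}_{\mathcal R}(\Cc)$ in the sense of \cite[5.3.6.2]{lurie:htt}: here $K$ indexes spans, the pair $\{K,\emptyset\}$ means one freely adjoins pushouts and an initial object, and $\mathcal R$ consists of the cocartesian squares already present in $\Cc$ together with the zero object. By the very design of $\presh^{\mathcal K}_{\mathcal R}$ the functor $j$ preserves those finite colimits that exist in $\Cc$, and the universal property \emph{(iii)} (in right-exact form, which for stable targets is the same as exact) is precisely the universal property supplied by \cite[5.3.6.2]{lurie:htt}. Preservation of limits is then argued by identifying $\Cc \to \Cc^{\mathrm{st}} \to \presh(\Cc)$ with the Yoneda embedding. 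Stability is obtained as in your sketch: every object of $\Cc^{\mathrm{st}}$ is a finite colimit of objects of $\Cc$, suspension commutes with colimits and is invertible on $\Cc$, hence on $\Cc^{\mathrm{st}}$, and one invokes \cite[1.4.2.27]{lurie:halg}. Item \emph{(iv)} is deduced from \emph{(iii)}.

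Your construction is in a sense dual: embedding into $\Fct(\Cc\op,\mathbf{Sp})$ makes limit preservation automatic but leaves colimit preservation open, and you are then forced to ask whether an arbitrary pushout that happens to exist in $\Cc$ is also a pullback. You do not prove this, and from the bare hypotheses (pointed, admits suspensions, $\Sigma$ invertible) it is not clear one can: those hypotheses constrain only the very special pushouts $0\sqcup_X 0$, not a general one. For the intended application this is harmless --- $\Tateu U_0(\Cc)$ sits fully faithfully inside the stable $\Prou U\Indu U(\Cc)$, so any pushout it possesses is automatically biCartesian --- but the lemma is stated in greater generality. Until this is addressed your \emph{(ii)} is incomplete; and since your derivations of \emph{(iii)} and \emph{(iv)} depend on the extension of a functor along $j$ restricting back to the given functor on $\Cc$, they inherit the same gap. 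The paper's choice of $\presh^{\mathcal K}_{\mathcal R}$ simply never meets this obstacle, because preservation of existing finite colimits is built into the construction rather than something to be verified afterwards.
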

 
In the proof of the above lemma, we will need the notation:
\begin{df}[{see \cite[1.2.8.4]{lurie:htt}}]
Let $K$ be a simplicial set. We will denote by $K^{\triangleright}$ the simplicial set obtained from $K$ by formally adding a final object. This final object will be called the cone point of $K^\triangleright$.
\end{df}

\begin{proof}
Let us denote by $K$ the simplicial set corresponding to a diagram $\bullet \from \bullet \to \bullet$. Let $\mathcal R$ denote the collection of all cocartesian diagrams $K^\triangleright \to \Cc$ and the zero $\emptyset^\triangleright = \bullet \to \Cc$ in $\Cc$.
We then set $\Cc^\mathrm{st} = \presh^{\{K,\emptyset\}}_{\mathcal R}(\Cc)$ using the notation of \cite[5.3.6.2]{lurie:htt}. Note that \emph{(iii)} is proven in \emph{loc. cit.}.
The category $\Cc^\mathrm{st}$ is pointed and it comes with two natural fully faithful maps
\[
\mymatrix{
\Cc \ar[r]^-j & \Cc^\mathrm{st} \ar[r] & \presh(\Cc)
}
\]
whose composite is the Yoneda functor and therefore preserves limits which exist in $\Cc$. It follows that $j$ also preserves those limits.
By definition, the functor $j$ preserves finite colimits which exist in $\Cc$.

Any object of $\Cc^\mathrm{st}$ is a finite colimit of objects in $\Cc$. Its suspension is therefore the colimit of the suspensions of those objects.
The suspension functor $\Cc^\mathrm{st} \to \Cc^\mathrm{st}$ is thus an equivalence.
Corollary \cite[1.4.2.27]{lurie:halg} implies that $\Cc^\mathrm{st}$ is stable.

We now focus on the assertion \emph{(iv)}. Let $f \colon \Cc \to \Dd$ be as required. Because of the third point, there is an essentially unique functor $g \colon \Cc^\mathrm{st} \to \Dd$ lifting $f$. Every object in $\Cc^\mathrm{st}$ can be written as both a colimit and a limit of objects of $\Cc$. It follows that $g$ is fully faithful and then that $\Cc^\mathrm{st}$ contains the smallest full and stable subcategory $\Dd'$ of $\Dd$ extending $\Cc$. There is also a universal map $\Cc^\mathrm{st} \to \Dd'$ which is easily seen to be an inverse to the inclusion.
\end{proof}

\begin{lem}\label{indandproareinc}
Let $\Cc$ be an idempotent complete $\mathbb V$-small $(\infty,1)$-category. We consider the natural embeddings $i \colon \Prou U (\Cc) \to \Prou U \Indu U(\Cc)$ and $j \colon \Indu U(\Cc) \to \Prou U \Indu U(\Cc)$. We will also denote by $k$ the embedding $\Cc \to \Prou U \Indu U(\Cc)$.
If an object of $\Prou U \Indu U(\Cc)$ is in both the essential images of $i$ and $j$, then it is in the essential image of $k$.
\end{lem}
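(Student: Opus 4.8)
The plan is to show that the object in question is, inside $\Indu U(\Cc)$, a retract of an object coming from $\Cc$, and then to conclude with the idempotent completeness of $\Cc$. Write $\iota\colon\Cc\to\Indu U(\Cc)$ and $\pi\colon\Cc\to\Prou U(\Cc)$ for the canonical embeddings, so that $k=j\circ\iota=i\circ\pi$. Let $X\in\Prou U\Indu U(\Cc)$ be an object with $X\simeq j(Y)$ for some $Y\in\Indu U(\Cc)$ and $X\simeq i(Z)$ for some $Z\in\Prou U(\Cc)$. By the dual (for $\Prou U$) of \autoref{indu-thm}, the category $\Prou U(\Cc)$ is generated under $\mathbb U$-small cofiltered limits by the image of $\Cc$, so we may write $Z\simeq\lim_\gamma\pi(c_\gamma)$ for some $\mathbb U$-small cofiltered diagram $\gamma\mapsto c_\gamma$ in $\Cc$. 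The functor $i=\Prou U(\iota)$ is the pro-extension of $\iota$ and hence preserves $\mathbb U$-small cofiltered limits (it is the opposite of $\Indu U(\iota\op)$, which preserves $\mathbb U$-small filtered colimits by \autoref{indu-thm}). Therefore $X\simeq i(Z)\simeq\lim_\gamma i(\pi(c_\gamma))\simeq\lim_\gamma k(c_\gamma)$ in $\Prou U\Indu U(\Cc)$.

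Fix an equivalence $\alpha\colon\lim_\gamma k(c_\gamma)\to^\sim j(Y)$, with projections $p_\gamma\colon\lim_\gamma k(c_\gamma)\to k(c_\gamma)$. Applying the dual (for $\Prou U$) of \autoref{indu-thm} to the $\mathbb V$-small category $\Indu U(\Cc)$, every object in the essential image of $j$ is $\mathbb U$-small cocompact in $\Prou U\Indu U(\Cc)$; since $\lim_\gamma k(c_\gamma)$ is a $\mathbb U$-small cofiltered limit of objects of $\Indu U(\Cc)$, composition with the projections $p_\gamma$ yields an equivalence
\[
\colim_\gamma\Map_{\Indu U(\Cc)}\big(\iota(c_\gamma),\,Y\big)\ \to^\sim\ \Map_{\Prou U\Indu U(\Cc)}\big(\lim_\gamma k(c_\gamma),\,j(Y)\big).
\]
Regarding $\alpha$ as a point of the target, it lifts along this equivalence to a point of the source: there are an index $\gamma_0$ and a map $\phi\colon\iota(c_{\gamma_0})\to Y$ with $\alpha\simeq j(\phi)\circ p_{\gamma_0}$. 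On the other hand $p_{\gamma_0}\circ\alpha^{-1}\colon j(Y)\to k(c_{\gamma_0})$ equals $j(\psi)$ for an essentially unique $\psi\colon Y\to\iota(c_{\gamma_0})$, by full faithfulness of $j$. Then $j(\phi\circ\psi)\simeq j(\phi)\circ p_{\gamma_0}\circ\alpha^{-1}\simeq\alpha\circ\alpha^{-1}\simeq\id_{j(Y)}$, whence $\phi\circ\psi\simeq\id_Y$, again by full faithfulness of $j$. Thus $Y$ is a retract in $\Indu U(\Cc)$ of $\iota(c_{\gamma_0})$.

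It remains to see that the essential image of $\iota\colon\Cc\to\Indu U(\Cc)$ is closed under retracts when $\Cc$ is idempotent complete. The idempotent $\psi\circ\phi$ of $\iota(c_{\gamma_0})$ takes its values in the full subcategory $\iota(\Cc)$, hence transports along the fully faithful $\iota$ to an idempotent endomorphism of $c_{\gamma_0}$ in $\Cc$; since $\Cc$ is idempotent complete this idempotent splits, say with image $W\in\Cc$, and applying $\iota$ exhibits $\iota(W)$ as a splitting of $\psi\circ\phi$ in $\Indu U(\Cc)$. Splittings of an idempotent being unique up to equivalence, $Y\simeq\iota(W)$, and therefore $X\simeq j(Y)\simeq j(\iota(W))=k(W)$ lies in the essential image of $k$.

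The one step that needs genuine care — and which I expect to be the crux — is the displayed mapping-space equivalence, i.e. the assertion that an equivalence between a pro-object and an ind-object in $\Prou U\Indu U(\Cc)$ must ``factor through a single object of $\Cc$''. Its two ingredients are: first, that $X$ really has the shape $\lim_\gamma k(c_\gamma)$ with all $c_\gamma\in\Cc$, which is where one uses that $i$ preserves $\mathbb U$-small cofiltered limits; and second, that objects of $\Indu U(\Cc)$ remain $\mathbb U$-small cocompact after the further passage to $\Prou U(-)$, which is exactly the dual, for the base category $\Indu U(\Cc)$, of the compactness statement in \autoref{indu-thm}. Everything past that — the retract manipulation and the appeal to idempotent completeness — is routine.
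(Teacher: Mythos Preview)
Your proof is correct and follows essentially the same approach as the paper: write the pro-object as a cofiltered limit of objects of $\Cc$, use cocompactness of the ind-object in $\Prou U\Indu U(\Cc)$ to factor the equivalence through a single $c_{\gamma_0}$, obtain a retraction, and conclude via idempotent completeness. Your version is more detailed where the paper is terse --- in particular, the paper compresses your final paragraph into a citation of \cite[5.4.2.4]{lurie:htt} --- but the argument is the same.
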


\begin{proof}
Let $x \in \Indu U(\Cc)$.
Let us assume there exists a pro-object $y \in \Prou U(\Cc)$ and an equivalence $f \colon x \to y$. Let $\bar y \colon K \op \to \Cc$ be a cofiltered diagram of whom $y$ is a limit in $\Prou U(\Cc)$. The equivalence $f$ induces a morphism from the constant diagram $x \colon K\op \to \Indu U(\Cc)$ to $\bar y \colon K \op \to \Indu U(\Cc)$.
An inverse $g \colon y \to x$ of $f$ then induces a map $y_k = \bar y(k) \to x$ for some $k \in K$ such that the composite morphism $x \to y_k \to x$ is homotopic to the identity. Idempotent completeness and \cite[5.4.2.4]{lurie:htt} finish the proof.
\end{proof}

\begin{df}
Let $\inftyCat^{\mathbb V\mathrm{,st}}$ denote the subcategory of $\inftyCatu V$ spanned by stable categories with exact functors between them -- see \cite[1.1.4]{lurie:halg}.
Let $\inftyCat^{\mathbb V\mathrm{,st,id}}$ denote the full subcategory of $\inftyCat^{\mathbb V\mathrm{,st}}$ spanned by idempotent complete stable categories.
\end{df}

\section{Tate objects}
In this subsection we define the category of Tate objects in a stable $(\infty,1)$-category.
\begin{df}\label{dftate}
Let $\Cc$ be a $\mathbb V$-small stable $(\infty,1)$-category. We define the category $\Tateu U_0(\Cc)$ of pure Tate objects in $\Cc$ as the full sub-category of $\Prou U\Indu U(\Cc)$ spanned by the images of $\Indu U(\Cc)$ and $\Prou U(\Cc)$ through the canonical functors.
The category $\Tateu U_0(\Cc)$ obviously satisfies the conditions of \autoref{tatification} and we define the category $\Tateu U_\mathrm{el}(\Cc)$ of elementary Tate objects in $\Cc$ as the stable envelop
\[
\Tateu U_\mathrm{el}(\Cc) = \left( \Tateu U_0(\Cc) \right) ^{\mathrm{st}}
\]
We also define the category $\Tateu U(\Cc)$ of Tate objects in $\Cc$ as the idempotent completion of $\Tateu U_\mathrm{el}(\Cc)$.
We have fully faithful exact functors between stable $(\infty,1)$-categories
\[
\Tateu U_\mathrm{el}(\Cc) \to \Tateu U(\Cc) \to \Prou U\Indu U(\Cc)
\]
\end{df}

\begin{rmq}
It follows from \autoref{tatification} that $\Tateu U_\mathrm{el}(\Cc)$ (resp. $\Tateu U(\Cc)$) is the smallest stable (resp. stable and idempotent complete) subcategory of $\Prou U \Indu U(\Cc)$ containing both the essential images of $\Indu U(\Cc)$ and $\Prou U(\Cc)$.
We will see (\autoref{cortateinindpro}) that the same holds in $\Indu U \Prou U(\Cc)$ instead of $\Prou U \Indu U(\Cc)$.
\end{rmq}

\begin{ex}
Let $\Cc$ be the category of perfect complexes over a field $k$. There is a natural incarnation of $k(\!(t)\!)$ in $\Tateu U(\Perf_k)$, given by the isomorphism (of vector spaces)
\[
k(\!(t)\!) \simeq \lim_p \colim_n t^{-n} \quot{k[t]}{t^{p+n}}
\]
To see it actually lives in Tate objects, one can write the isomorphism $k(\!(t)\!) \simeq k[\![t]\!] \oplus t^{-1} k[t^{-1}]$
and translate it in terms of objects in $\Prou U \Indu U(\Perf_k)$.
It follows that $\lim_p \colim_n t^{-n} \quot{k[t]}{t^{p+n}}$ indeed lies in $\Tateu U(\Perf_k)$.
We will see in \autoref{sectionlattices} that any Tate object can be written as such an extension, of an ind-object by a pro-object.

The above example does not require $(\infty,1)$-categories to work. Although, it leads to the following generalisation. We can see $k(\!(t)\!)$ as the ring of functions on the punctured formal neighbourhood $\widehat \A^1 \smallsetminus \{0\}$. Now considering the complex of derived global sections of the sheaf of functions on the punctured formal neighbourhood $\widehat \A^d \smallsetminus \{0\} = \Spec(k[\![\el{t}{d}]\!]) \smallsetminus \{0\}$ of dimension $d$. Computing its cohomology, we get
\[
\homol^n(\widehat \A^d \smallsetminus \{0\}, \Oo) \simeq
\begin{cases}
k[\![\el{t}{d}]\!] & \text{if } n = 0
\\
(\el{t}{d}[])^{-1}k[\el{t^{-1}}{d}] & \text{if } n = d-1
\\
0 & \text{else}
\end{cases}
\]
Hence we get $\mathbb R \Gamma(\widehat \A^d \smallsetminus \{0\}, \Oo) \simeq k[\![\el{t}{d}]\!] \oplus (\el{t}{d}[])k[\el{t^{-1}}{d}][1-d]$ (where $[1-d]$ is the shift by $1-d$). It is again the extension of a pro-object by an ind-object.
\end{ex}

\begin{rmq}
Note that $\Tateu U (\Cc)$ is $\mathbb V$-small.
The construction $\Tateu U(-)$ defines a functor 
\[
\inftyCat^{\mathbb V\mathrm{,st}} \to \inftyCat^{\mathbb V\mathrm{,st,id}}
\]
It comes with a fully faithful --- ie pointwise fully faithful --- natural transformation
\[
\mymatrix{
\inftyCat^{\mathbb V\mathrm{,st}} \ar[rr]^-{\Tateu U} \ar[d] && \inftyCat^{\mathbb V\mathrm{,st,id}} \ar[d] \ar@{=>}[dll] \\ \inftyCatu V \ar[rr]_{\Prou U \Indu U} && \inftyCatu V
}
\]
\end{rmq}

\begin{rmq}
We can immediately see that the functor $\Tateu U$ map any fully faithful and exact functor $\Cc \to \Dd$ between stable categories to a fully faithful (and exact) functor $\Tateu U(\Cc) \to \Tateu U(\Dd)$.
\end{rmq}

Let us now give a universal property for the category of pure Tate objects. The next theorem states that for any $(\infty,1)$-category $\Dd$ and any commutative diagram
\[
\mymatrix{
\Cc \ar[r] \ar[d] & \Indu U(\Cc) \ar[d]^-f \\ \Prou U(\Cc) \ar[r]_-g & \Dd
}
\]
such that $f$ preserves $\mathbb U$-small filtered colimits and $g$ preserves $\mathbb U$-small cofiltered limits there exists an essentially unique functor $\Tateu U_0(\Cc) \to \Dd$ such that $f$ and $g$ are respectively equivalent to the composite functors
\begin{align*}
&\Indu U(\Cc) \to \Tateu U_0(\Cc) \to \Dd \\
&\Prou U(\Cc) \to \Tateu U_0(\Cc) \to \Dd
\end{align*}
This universal property was discovered during a discussion with Michael Gröchenig, whom the author thanks greatly.
To state formally this property, let us fix some notations. Let $i \colon \Indu U(\Cc) \to \Tateu U_0(\Cc)$ and $p \colon \Prou U(\Cc) \to \Tateu U_0(\Cc)$ denote the canonical inclusions.
We will denote by $\Fct_t(\Tateu U_0(\Cc),\Dd)$ the full subcategory of $\Fct(\Tateu U_0(\Cc),\Dd)$ spanned by those functors $\xi$ such that
\begin{itemize}
\item The composite functor $\xi i$ maps filtered colimit diagrams to colimit diagrams.
\item The composite functor $\xi p$ maps cofiltered limit diagrams to limit diagrams.
\end{itemize}
Let also $\Fct_m(\Cc,\Dd)$ denote the category of functors $g \colon \Cc \to \Dd$ such that
\begin{itemize}
\item For any filtered diagram $K \to \Cc$, the composite diagram $K \to \Cc \to \Dd$ admits a colimit in $\Dd$.
\item For any cofiltered diagram $K\op \to \Cc$, the composite diagram $K\op \to \Cc \to \Dd$ admits a limit in $\Dd$.
\end{itemize}
\begin{thm}\label{univpropT0}
Let $\Cc$ be a $\mathbb V$-small stable $(\infty,1)$-category. For any $(\infty,1)$-category $\Dd$, the restriction functor induces an equivalence
\[
\mymatrix{
\Fct_t(\Tateu U_0(\Cc),\Dd) \ar[r] & \Fct_m(\Cc,\Dd)
}
\]
\end{thm}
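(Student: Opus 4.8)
The plan is to exhibit an explicit inverse to the restriction functor
$\rho\colon \Fct_t(\Tateu U_0(\Cc),\Dd) \to \Fct_m(\Cc,\Dd)$, $\xi \mapsto \xi \circ k$, where $k\colon \Cc \to \Tateu U_0(\Cc)$ is the canonical functor (it factors through both $i$ and $p$), by assembling two Kan extensions. First one checks that $\rho$ is well defined: if $K \to \Cc$ is a $\mathbb U$-small filtered diagram its colimit exists in $\Indu U(\Cc)$ by \autoref{indu-thm}, and since $\xi \circ i$ carries filtered colimit diagrams to colimit diagrams the composite $K \to \Cc \to \Dd$ acquires a colimit; dually for cofiltered diagrams through $\Prou U(\Cc)$. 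Hence $\rho(\xi) \in \Fct_m(\Cc,\Dd)$.

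The first ingredient is the unconditional form of the universal property of $\Indu U$. Combining \autoref{indu-thm} with the pointwise formula for Kan extensions (\cite[4.3.2.2]{lurie:htt}, \cite[5.3.5.10]{lurie:htt}), restriction along $\Cc \to \Indu U(\Cc)$ identifies the functors $\Indu U(\Cc) \to \Dd$ preserving $\mathbb U$-small filtered colimits with the functors $g\colon \Cc \to \Dd$ admitting a left Kan extension along $\Cc \to \Indu U(\Cc)$ — which, by the pointwise formula, is exactly the condition that every $\mathbb U$-small filtered diagram of $\Cc$ acquire a colimit after composition with $g$ — the inverse being left Kan extension. Dualizing gives the analogous statement for $\Prou U(\Cc)$, $\mathbb U$-small cofiltered limits and right Kan extensions. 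So starting from $g \in \Fct_m(\Cc,\Dd)$ I obtain an essentially unique $\widetilde g\colon \Indu U(\Cc) \to \Dd$ preserving $\mathbb U$-small filtered colimits with $\widetilde g|_\Cc \simeq g$, and an essentially unique $g_-\colon \Prou U(\Cc) \to \Dd$ preserving $\mathbb U$-small cofiltered limits with $g_-|_\Cc \simeq g$.

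The second, more delicate, step builds $\xi_g\colon \Tateu U_0(\Cc) \to \Dd$ from $\widetilde g$. The key combinatorial fact — an analogue of \cite[3.9]{bgw:tate} — is that any morphism in $\Prou U\Indu U(\Cc)$ from a pro-object $y = \lim_j \bar y(j)$, $\bar y\colon J\op \to \Cc$, to an ind-object factors through some $\bar y(j_0) \in \Cc$; this is immediate from $\Map(y,W) \simeq \colim_j \Map(\bar y(j),W)$. It implies that the canonical diagram $J\op \to \Indu U(\Cc)_{y/}$, $j \mapsto (\bar y(j),\, y \to \bar y(j))$, is cofinal for the purpose of the limit that computes the right Kan extension — a cofinality computation in the spirit of \autoref{strictification}, using cofilteredness of $J$. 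Consequently the right Kan extension $\xi_g$ of $\widetilde g$ along the fully faithful inclusion $\Indu U(\Cc) \hookrightarrow \Tateu U_0(\Cc)$ exists pointwise: on an ind-object it recovers $\widetilde g$, and on a pro-object $y$ the defining limit is cofinally $\lim_j \widetilde g(\bar y(j)) \simeq \lim_j g(\bar y(j))$, which exists because $g \in \Fct_m(\Cc,\Dd)$. I then check $\xi_g \in \Fct_t$: its restriction to $\Indu U(\Cc)$ is $\widetilde g$, which preserves $\mathbb U$-small filtered colimits; and its restriction to $\Prou U(\Cc)$ restricts to $g$ on $\Cc$ and preserves $\mathbb U$-small cofiltered limits — for limit diagrams valued in $\Cc$ this is the displayed formula, and a general $\mathbb U$-small cofiltered limit of pro-objects reindexes to one of objects of $\Cc$ — so it agrees with $g_-$. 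Finally $\rho(\xi_g) = \xi_g \circ i|_\Cc \simeq \widetilde g|_\Cc \simeq g$, so $\rho \circ \xi_{(-)} \simeq \id$.

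It remains to see $\rho$ is fully faithful, i.e. that every $\xi \in \Fct_t$ is recovered, naturally, as $\xi_{\rho\xi}$. Since $\xi \circ i$ preserves $\mathbb U$-small filtered colimits it is the left Kan extension of $\xi \circ k$ along $\Cc \to \Indu U(\Cc)$, so $\xi \circ i \simeq \widetilde{\rho\xi}$. And $\xi$ is the right Kan extension of $\xi \circ i$ along $\Indu U(\Cc) \hookrightarrow \Tateu U_0(\Cc)$: on an ind-object this is clear, while on a pro-object $y = \lim_j \bar y(j)$ the defining limit is cofinally $\lim_j \xi(k(\bar y(j)))$, which coincides with $\xi(y)$ because $\xi \circ p$ carries the cofiltered limit diagram $(\bar y(j))_{j}$ — whose limit in $\Prou U(\Cc)$ is $y$ and which the embedding $p$ itself preserves — to a limit diagram. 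Naturality is formal, all the extensions in play being adjoint (where defined) to the relevant restriction functors; so $\xi_{(-)} \circ \rho \simeq \id$ and $\rho$ is an equivalence. I expect the one genuinely non-formal point to be the two cofinality assertions — that the canonical pro-diagram is cofinal in $\Indu U(\Cc)_{y/}$, and likewise in $\Cc_{y/}$ — whose proofs must combine the factorization-through-$\Cc$ property with the cofilteredness of the indexing category, exactly as in the proof of \autoref{strictification}; the rest (well-definedness of $\rho$, identification of restrictions with Kan extensions, the reindexing of cofiltered limits, and naturality) is routine.
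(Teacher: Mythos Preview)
Your approach is correct and shares its core idea with the paper's: both extend $g$ to $\widetilde g$ on $\Indu U(\Cc)$ via filtered colimits and then right-Kan-extend toward the pro-side. The packaging, however, is genuinely different. The paper embeds $\Dd$ into $\presh(\Dd)$ so that all Kan extensions exist unconditionally, right-Kan-extends $\widetilde g$ all the way to $\Prou U\Indu U(\Cc)$ (this is the functor $\alpha$, whose limit-preservation is the universal property of $\Prou U$, so no cofinality needs separate checking), and then restricts to $\Tateu U_0(\Cc)$; the resulting correspondence is shown to be an equivalence by observing that restriction is conservative, since every object of $\Tateu U_0(\Cc)$ lies in either $\Indu U(\Cc)$ or $\Prou U(\Cc)$. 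Your route stays inside $\Dd$ and right-Kan-extends directly along $\Indu U(\Cc) \hookrightarrow \Tateu U_0(\Cc)$, which is why you need the cofinality of $J\op$ in the coslice. That cofinality is true --- it is the standard Pro-category fact (dual to the Ind statement that a presenting filtered diagram is cofinal in $\Cc_{/X}$), applied once with $\Cc$ and once with $\Indu U(\Cc)$ as base; the factorization-through-$\Cc$ observation you highlight is not actually the crux, and the analogy with \autoref{strictification} is loose. Your claim that $\xi_g \circ p$ preserves \emph{all} cofiltered limits (not just those indexed by diagrams in $\Cc$) via ``reindexing'' is thin as written; the clean fix is to note that $\xi_g$ coincides on $\Tateu U_0(\Cc)$ with the restriction of the $\Prou U$-extension of $\widetilde g$ to $\Prou U\Indu U(\Cc)$, which preserves cofiltered limits by construction --- but this is exactly the paper's detour. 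What your approach buys is explicitness and an identification of the inverse as a concrete Kan extension; what the paper's buys is that the pointwise computations and the two cofinality checks are absorbed into one formal adjunction plus a conservativity step.
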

\begin{proof}
Let us shorten the notations:
\[
\mathrm I \Cc = \Indu U(\Cc) \hspace{1cm} \mathrm P \Cc = \Prou U(\Cc) \hspace{1cm} \mathrm T = \Tateu U_0(\Cc) \hspace{1cm} \mathrm{PI} \Cc = \Prou U(\Indu U(\Cc))
\]
Recall that $\presh(\Dd)$ denotes the $(\infty,1)$-category of simplicial presheaves on $\Dd$.
The restriction functor $\Fct(\mathrm{PI} \Cc,\presh(\Dd)) \to \Fct(\mathrm T\Cc,\presh(\Dd))$ admits a left adjoint given by the left Kan extension. The restriction functor $\Fct(\mathrm{PI} \Cc,\presh(\Dd)) \to \Fct(\mathrm I\Cc,\presh(\Dd))$ admits a right adjoint, given by the right Kan extension. Let us fix their notation
\[
\mymatrix{
\Fct(\mathrm{T} \Cc,\presh(\Dd)) \ar@<2pt>[r]^\delta & \Fct(\mathrm{PI} \Cc,\presh(\Dd)) \ar@<2pt>[r]^\beta \ar@<2pt>[l]^\gamma & \Fct(\mathrm{I} \Cc,\presh(\Dd)) \ar@<2pt>[l]^\alpha
}
\]
the left adjoints being represented above their right adjoint. Note that both $\alpha$ and $\delta$ are fully faithful.
Let also $\tau$ denote the fully faithful functor
\[
\mymatrix{
\displaystyle \Fct_m(\Cc,\Dd) \simeq \Fct^{\mathrm c}(\mathrm I \Cc, \Dd) \timesunder[\Fct(\Cc,\Dd)] \Fct^{\mathrm l}(\mathrm P \Cc,\Dd) \ar[r]^-\tau &
\displaystyle \Fct(\mathrm I \Cc, \presh(\Dd)) \timesunder[\Fct(\Cc,\presh(\Dd))] \Fct^{\mathrm l}(\mathrm P \Cc,\presh(\Dd)) \simeq \Fct(\mathrm I \Cc, \presh(\Dd))
}
\]
where $\Fct^\mathrm c$ (resp. $\Fct^\mathrm l$) denotes the category of functors preserving filtered colimits (resp. cofiltered limits) which exist in the source. We use here that the Yoneda embedding $\Dd \to \presh(\Dd)$ preserves limits.
Let $\theta$ be the fully faithful functor
\[
\mymatrix{
\Fct_t(\mathrm T\Cc,\Dd) \ar[r]^-\theta &
\Fct(\mathrm T\Cc,\presh(\Dd))
}
\]
The composite functor $\beta \delta$ is nothing but the restriction along the canonical inclusion $\mathrm I \Cc \to \mathrm T \Cc$.
It follows that $\beta \delta \theta$ has image in the essential image of $\tau$. On the other hand, the functor $\gamma \alpha \tau$ has image in the essential image of $\theta$.
We hence get an adjunction
\[
f \colon \Fct_t(\mathrm T\Cc,\Dd) \rightleftarrows \Fct_m(\Cc,\Dd) \noloc g
\]
where $f$ is left adjoint to $g$. The functor $g$ is equivalent to the restriction functor and the unit transformation $fg \to \id_X$ is then an equivalence. Moreover, as objects of $\mathrm T\Cc$ are either pro-objects or ind-objects, the restriction functor $f$ is conservative. It follows that the above adjunction is an equivalence.
\end{proof}

\begin{cor}\label{cortateinindpro}
The category of Tate objects is equivalent to the smallest stable and idempotent complete full subcategory of $\Indu U \Prou U(\Cc)$ generated by the images of $\Indu U(\Cc)$ and $\Prou U(\Cc)$.
\end{cor}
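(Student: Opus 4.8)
The plan is to deduce this from the universal property established in \autoref{univpropT0}, in a manner entirely parallel to the way the analogous fact for $\Prou U \Indu U(\Cc)$ was obtained. First I would observe that the pair of canonical embeddings $\Indu U(\Cc) \to \Indu U \Prou U(\Cc)$ and $\Prou U(\Cc) \to \Indu U \Prou U(\Cc)$ together with the two composites $\Cc \to \Indu U(\Cc) \to \Indu U \Prou U(\Cc)$ and $\Cc \to \Prou U(\Cc) \to \Indu U \Prou U(\Cc)$ agree (both compute the Yoneda-type embedding of $\Cc$), so we have a commutative square as in \autoref{univpropT0}. The embedding $f \colon \Indu U(\Cc) \to \Indu U \Prou U(\Cc)$ preserves $\mathbb U$-small filtered colimits, and $g \colon \Prou U(\Cc) \to \Indu U \Prou U(\Cc)$ preserves $\mathbb U$-small cofiltered limits: indeed $\Prou U(\Cc) = (\Indu U(\Cc\op))\op$ sits inside $\Indu U \Prou U(\Cc)$ via the functor $\Indu U$ applied to the embedding $\Cc\op \to \Prou U(\Cc)\op = \Indu U(\Cc\op)$, and $\Indu U$ of a functor preserves filtered colimits, so dually $g$ preserves cofiltered limits. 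Hence the hypotheses of \autoref{univpropT0} are met, and we obtain an essentially unique functor $\Phi \colon \Tateu U_0(\Cc) \to \Indu U \Prou U(\Cc)$ compatible with the two inclusions.

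Next I would check that $\Phi$ is fully faithful. This amounts to a computation of mapping spaces: an object of $\Tateu U_0(\Cc)$ is either an ind-object or a pro-object, and the four types of mapping spaces (ind to ind, ind to pro, pro to ind, pro to pro) must be shown to be preserved. The ind-to-ind and pro-to-pro cases are immediate since both $\Indu U$ and $\Prou U$ embed fully faithfully. For the mixed cases one writes the relevant object as a filtered colimit (resp. cofiltered limit) of objects of $\Cc$ and reduces, using that $f$ preserves filtered colimits and $g$ preserves cofiltered limits on each side, to the corresponding mapping spaces computed in $\Prou U \Indu U(\Cc)$, where fully faithfulness of $\Tateu U_0(\Cc) \to \Prou U \Indu U(\Cc)$ is part of \autoref{dftate}. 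So $\Phi$ identifies $\Tateu U_0(\Cc)$ with a full subcategory of $\Indu U \Prou U(\Cc)$ containing the essential images of $\Indu U(\Cc)$ and $\Prou U(\Cc)$.

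I would then pass to stable envelopes and idempotent completions. Since $\Tateu U_\mathrm{el}(\Cc) = (\Tateu U_0(\Cc))^\mathrm{st}$, item \emph{(iv)} of \autoref{tatification} applied to the fully faithful colimit-and-limit-preserving functor $\Phi$ (noting $\Tateu U_0(\Cc)$ meets the hypotheses of \autoref{tatification}, as recorded in \autoref{dftate}) identifies $\Tateu U_\mathrm{el}(\Cc)$ with the smallest stable subcategory of $\Indu U \Prou U(\Cc)$ containing the image of $\Tateu U_0(\Cc)$, which is the smallest stable subcategory containing the essential images of $\Indu U(\Cc)$ and $\Prou U(\Cc)$. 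Taking idempotent completions, $\Tateu U(\Cc)$ is then the smallest stable and idempotent complete full subcategory of $\Indu U \Prou U(\Cc)$ so generated; here one uses that idempotent completion is compatible with full embeddings into an already idempotent complete category such as $\Indu U \Prou U(\Cc)$, together with \cite[5.4.2.4]{lurie:htt} to see that splitting idempotents inside the ambient category does not leave the generated subcategory. I expect the main obstacle to be the mixed-type mapping-space computation showing $\Phi$ is fully faithful, in particular handling the pro-to-ind case cleanly — there one genuinely uses the strictification of morphisms (\autoref{strictification}) to commute the colimit past the limit, analogously to the symmetric situation already treated for $\Tateu U_0(\Cc) \to \Prou U \Indu U(\Cc)$.
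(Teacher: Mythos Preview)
Your proposal is correct and follows essentially the same route as the paper's one-line proof, which also invokes \autoref{univpropT0} and \autoref{tatification}; you have simply spelled out the details the paper leaves implicit. One small remark: your anticipated ``main obstacle'' is not one --- the mixed mapping-space checks reduce to commuting two filtered colimits (for pro-to-ind) or two limits (for ind-to-pro), so \autoref{strictification} is not needed here.
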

\begin{proof}
This follows from \autoref{univpropT0} and \autoref{tatification}.
\end{proof}

\begin{rmq}
The fully faithful functor $\operatorname{j} \colon \Tateu U_0(\Cc) \to \Prou U\Indu U(\Cc)$ preserves both the limits and colimits which exist in $\Tateu U_0(\Cc)$. Let indeed $\bar x \colon K \to \Tateu U_0(\Cc)$ be a diagram which admits a colimit $x \in \Tateu U_0(\Cc)$. Let us denote by $x'$ a colimit of $\operatorname{j} \bar x$ in $\Prou U\Indu U(\Cc)$. We have, for any cofiltered diagram $y \bar \colon L\op \to \Indu U(\Cc)$
\begin{align*}
\Map_{\Prou U\Indu U(\Cc)}(x', \lim \bar y) &\simeq \colim_l \colim_k \Map_{\Prou U\Indu U(\Cc)}(\operatorname{j} \bar x, \bar y) \simeq \colim_l \colim_k \Map_{\Tateu U_0(\Cc)}(\bar x, \bar y)
\\ &\simeq \colim_l \Map_{\Tateu U_0(\Cc)}(x,\bar y) \simeq \colim_l \Map_{\Prou U\Indu U(\Cc)}(x,\bar y)
\\ &\simeq \colim_l \Map_{\Prou U\Indu U(\Cc)}(x,\bar y) \simeq \Map_{\Prou U\Indu U(\Cc)}(x,\lim \bar y)
\end{align*}
We show symmetrically that the inclusion $\Tateu U_0(\Cc) \to \Indu U \Prou U(\Cc)$ preserves limits.
It follows that limits and colimits that exist in $\Tateu U_0(\Cc)$ are exactly those coming from diagram in either $\Indu U(\Cc)$ or $\Prou U(\Dd)$. We can hence reformulate the universal property from \autoref{univpropT0} as follows:
The datum of a commutative square
\[
\mymatrix{
\Cc \ar[r] \ar[d] & \Indu U(\Cc) \ar[d]^f \\ \Prou U(\Cc) \ar[r]_-g & \Dd
}
\]
such that $f$ preserves filtered colimits and $g$ preserves cofiltered limits is equivalent to that of a functor $\Tateu U_0(\Cc) \to \Dd$ preserving both filtered colimits and cofiltered limits which exist in $\Tateu U_0(\Cc)$.
\end{rmq}

Let us close this section with the following lemma. 
\begin{lem}\label{tate-dual}
Let $\Cc$ be a $\mathbb V$-small stable $(\infty,1)$-category with a functor $f \colon \Cc\op \to \Cc$.
The functor $f$ induces a functor
\[
\tilde f \colon \left(\Prou V\Indu U(\Cc)\right)\op \to \Prou V\Indu U(\Cc)
\]
which maps (elementary) $\mathbb U$-Tate objects to (elementary) $\mathbb U$-Tate objects.

If moreover the functor $f$ is an equivalence, then $\tilde f$ induces an equivalence 
\[
\left(\Tateu U(\Cc)\right)\op \simeq \Tateu U(\Cc)
\]
\end{lem}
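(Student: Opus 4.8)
The plan is to construct $\tilde f$ by pushing $f$ through the pro- and ind-completions, using the definitional identities $\Prou U(\Dd)=(\Indu U(\Dd\op))\op$ at each step. First I would extend the composite $\Cc\op\to\Prou U(\Cc)$ of $f$ with the embedding $\Cc\hookrightarrow\Prou U(\Cc)$ along $\Cc\op\hookrightarrow\Prou U(\Cc\op)$, which is possible because $\Prou U(\Cc)$ admits $\mathbb U$-small cofiltered limits (dual of \autoref{indu-thm}); the resulting functor is $\Prou U(f)\colon\Prou U(\Cc\op)\to\Prou U(\Cc)$. Composing with $\Prou U(\Cc)\hookrightarrow\Prou V(\Cc)\to\Prou V\Indu U(\Cc)$ gives $G\colon\Prou U(\Cc\op)\to\Prou V\Indu U(\Cc)$. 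Here is the point of the enlargement of universe: $\Indu U(\Cc)$ is $\mathbb V$-small and, $\Cc$ being stable, has finite limits, so $(\Indu U(\Cc))\op$ is $\mathbb V$-small with finite colimits; by the last assertion of \autoref{indu-thm} (with the roles of $\mathbb U$ and $\mathbb V$ interchanged) $\Prou V\Indu U(\Cc)=(\Indu V((\Indu U\Cc)\op))\op$ is therefore $\mathbb V$-presentable, hence has all $\mathbb V$-small filtered colimits and is idempotent complete. Consequently $G$ extends uniquely along $\Prou U(\Cc\op)\hookrightarrow\Indu V(\Prou U(\Cc\op))=(\Prou V\Indu U(\Cc))\op$ to a functor $\tilde f$ preserving $\mathbb V$-small filtered colimits. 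Unwinding the construction, $\tilde f$ restricts — under $(\Prou U\Cc)\op=\Indu U(\Cc\op)$ and $(\Indu U\Cc)\op=\Prou U(\Cc\op)$ — to $\Indu U(f)\colon(\Prou U\Cc)\op\to\Indu U(\Cc)$ and to $\Prou U(f)\colon(\Indu U\Cc)\op\to\Prou U(\Cc)$: it sends opposites of pro-objects to ind-objects and opposites of ind-objects to pro-objects. Since in the presentable category $\Prou V\Indu U(\Cc)$ finite limits and finite colimits commute with filtered colimits, each of the above building blocks is exact, so $\tilde f$ is exact (when $f$ is, in particular when $f$ is an equivalence). The one genuinely delicate point in this paragraph is verifying that the two iterated extensions really do restrict to $\Indu U(f)$ and $\Prou U(f)$ on the nose — this is where the bookkeeping with universes and with the canonical embeddings has to be carried out carefully.

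Next I would show $\tilde f$ preserves (elementary) Tate objects. By \autoref{tatification} and \autoref{dftate}, $\Tateu U_\mathrm{el}(\Cc)$ (resp.\ $\Tateu U(\Cc)$) is the smallest stable (resp.\ stable and idempotent complete) full subcategory of $\Prou V\Indu U(\Cc)$ containing $\Indu U(\Cc)$ and $\Prou U(\Cc)$; these properties are self-dual, so $\Tateu U_\mathrm{el}(\Cc)\op$ (resp.\ $\Tateu U(\Cc)\op$) is the smallest such subcategory of $(\Prou V\Indu U(\Cc))\op$ containing $\Indu U(\Cc)\op$ and $\Prou U(\Cc)\op$. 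Now $\mathcal S:=\{Z\in(\Prou V\Indu U(\Cc))\op : \tilde f(Z)\in\Tateu U_\mathrm{el}(\Cc)\}$ is a stable full subcategory of $(\Prou V\Indu U(\Cc))\op$, since $\tilde f$ is exact and $\Tateu U_\mathrm{el}(\Cc)$ is a stable subcategory; and $\mathcal S$ contains $\Indu U(\Cc)\op$ and $\Prou U(\Cc)\op$ because $\tilde f$ carries these into $\Prou U(\Cc)\subseteq\Tateu U_0(\Cc)$ and $\Indu U(\Cc)\subseteq\Tateu U_0(\Cc)$ respectively. Hence $\mathcal S\supseteq\Tateu U_\mathrm{el}(\Cc)\op$, which is exactly the assertion that $\tilde f$ maps elementary Tate objects to elementary Tate objects. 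Repeating the argument with "stable and idempotent complete" in place of "stable" — using that an exact functor carries retracts to retracts — yields the statement for $\Tateu U(\Cc)$.

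Finally, suppose $f$ is an equivalence. Then $\Indu U(f)$ and $\Prou U(f)$ are equivalences, so $\tilde f$ restricts to equivalences $\Prou U(\Cc)\op\simeq\Indu U(\Cc)$ and $\Indu U(\Cc)\op\simeq\Prou U(\Cc)$; and for a pro-object $P$ and an ind-object $I$ the map induced by $\tilde f$ on $\Map(P\op,I\op)=\Map_{\Prou U\Indu U(\Cc)}(I,P)$ is obtained by applying the equivalence $f$ to a double (co)limit formula, hence is again an equivalence. So $\tilde f$ is fully faithful on the pure Tate subcategory $\Tateu U_0(\Cc)\op$. Then the full subcategory of $\Tateu U(\Cc)\op$ on which $\tilde f$ is fully faithful contains $\Tateu U_0(\Cc)\op$ and is closed under fibres, cofibres and retracts, hence is all of $\Tateu U(\Cc)\op$; thus $\tilde f|_{\Tateu U(\Cc)\op}$ is fully faithful and exact, and its essential image is a stable, idempotent complete full subcategory of $\Tateu U(\Cc)$ containing $\Indu U(\Cc)$ and $\Prou U(\Cc)$, hence equal to $\Tateu U(\Cc)$. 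This produces the desired equivalence $\Tateu U(\Cc)\op\simeq\Tateu U(\Cc)$. (Alternatively, one can invoke the universal property \autoref{univpropT0} applied to $\Cc$ and to $\Cc\op$ to identify the restrictions of $\tilde f$ and of $\widetilde{f^{-1}}$ as mutually inverse functors.) Beyond the compatibility check flagged in the first paragraph — which I expect to be the main obstacle — the argument is formal, relying only on the "smallest (idempotent complete) stable subcategory" descriptions of $\Tateu U_\mathrm{el}(\Cc)$ and $\Tateu U(\Cc)$.
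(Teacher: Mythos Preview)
Your proposal is correct and follows essentially the same route as the paper: construct $\tilde f$ by extending $\Cc\op\to\Cc\to\Prou V\Indu U(\Cc)$ through the $\Pro$/$\Ind$ universal properties (using that $\Prou V\Indu U(\Cc)$ is the opposite of a $\mathbb V$-presentable category), observe that it swaps $\Indu U(\Cc)$ and $\Prou U(\Cc)$ and hence preserves pure Tate objects, and then use exactness together with the ``smallest stable (idempotent complete) subcategory'' description to conclude. The paper's proof is a terse two-paragraph version of exactly this, and in fact omits the explicit argument for the equivalence statement that you supply in your final paragraph; your added care about the exactness hypothesis on $f$ and about the universe bookkeeping is a genuine improvement in precision over the paper's write-up.
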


\begin{rmq}
The above lemma applies for instance when $\Cc$ is the category of perfect complexes on a base $k$. The duality functor $\dual{(-)} = \Homint(-,k)$ induces a duality of Tate objects
\[
\dual{(-)} \colon \Tate_k = \Tate(\Perf_k) \to^\sim \Tate_k\op
\]
In particular, for any Tate object $X$, we have $\dual{(\dual X)} \simeq X$.
\end{rmq}

\begin{proof}
The category $\Prou V\Indu U(\Cc)$ has all $\mathbb V$-small limits and colimits --- it is the opposite category of a $\mathbb V$-presentable category. We define the functor $\tilde f$ as the extension of the composition
\[
\Cc\op \to \Cc \to \Prou V\Indu U(\Cc)
\]
It maps objects of $\Indu U(\Cc)$ to objects of $\Prou U(\Cc) \subset \Prou V(\Cc)$ and vice-versa and therefore preserves pure Tate objects. The functor $\tilde f$ also preserves finite limits. It follows that it preserves Tate objects.
\end{proof}

\section{Lattices}
\label{sectionlattices}
Tate object are characterised by the existence of a lattice. A lattice for a pro-ind-object $X$ is an exact sequence
\[
X^p \to X \to X^i
\]
where $X^p$ is a pro-object and $X^i$ is an ind-object. We will see below that a pro-ind-object is a Tate object if and only if it admits a lattice. We will then study the category of lattices of a given Tate object.

\begin{prop}\label{tatediagexist}
Let $\Cc$ be a $\mathbb V$-small stable $(\infty,1)$-category. For any elementary Tate objects $X \in \Tateu U_\mathrm{el}(\Cc)$ there exists a $\mathbb U$-small cofiltered diagram $\bar X \colon K\op \to \Indu U(\Cc)$ such that
\begin{itemize}
\item The object $X$ is a limit of $\bar X$ in $\Prou U \Indu U(\Cc)$ and
\item For any $k \in K$ the diagram $\ker\left( \bar X \to \bar X(k) \right) \colon \left(\comma{k}{K}\right)\op \to \Indu U(\Cc)$ has values in the essential image of $\Cc$.
\end{itemize}
\end{prop}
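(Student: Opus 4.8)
The plan is to let $\mathcal{E}$ be the full subcategory of $\Prou U\Indu U(\Cc)$ spanned by those objects that admit a diagram $\bar X$ as in the statement, and to prove that $\mathcal{E}$ is a stable subcategory of $\Prou U\Indu U(\Cc)$ containing the essential images of $\Indu U(\Cc)$ and $\Prou U(\Cc)$. By the remark following \autoref{dftate} (a consequence of \autoref{tatification}), $\Tateu U_\mathrm{el}(\Cc)$ is the smallest stable subcategory of $\Prou U\Indu U(\Cc)$ with that property, whence $\Tateu U_\mathrm{el}(\Cc) \subseteq \mathcal{E}$, which is the assertion.

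First I would verify the generators and the easy closure properties. The zero object, and more generally any $X \in \Indu U(\Cc)$, lies in $\mathcal{E}$: take $K$ to be the point and $\bar X$ the constant diagram, so the associated kernel diagram is the zero diagram. If $X \in \Prou U(\Cc)$, pick a $\mathbb{U}$-small cofiltered diagram $\bar X \colon K\op \to \Cc$ with limit $X$ in $\Prou U(\Cc)$ and view it in $\Indu U(\Cc)$; since the functor $\Prou U(\Cc) \to \Prou U\Indu U(\Cc)$ is the cofiltered-limit-preserving extension of $\Cc \to \Indu U(\Cc)$ (the dual of \autoref{indu-thm}), $X$ is still the limit of $\bar X$ in $\Prou U\Indu U(\Cc)$; and as $\Cc \to \Indu U(\Cc)$ preserves finite limits and $\Cc$ is stable, each kernel diagram lands in the essential image of $\Cc$. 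Closure of $\mathcal{E}$ under the shift functors follows by shifting a presentation levelwise, using that the shift is an equivalence of $\Indu U(\Cc)$ and that $\Cc$ is closed under shifts.

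The main point is that $\mathcal{E}$ is closed under fibres; together with the above this makes $\mathcal{E}$ a stable subcategory, since it then also contains cofibres ($\mathrm{cofib}(f) \simeq \mathrm{fib}(f)[1]$) and the zero object. Let $g \colon X \to Y$ be a morphism of $\Prou U\Indu U(\Cc)$ between objects of $\mathcal{E}$, with nice presentations $\bar X \colon K\op \to \Indu U(\Cc)$ and $\bar Y \colon L\op \to \Indu U(\Cc)$. As $\Indu U(\Cc)$ is $\mathbb{V}$-small (\autoref{indu-thm}), the dual of \autoref{strictification} applied in $\Indu U(\Cc)$ provides a $\mathbb{U}$-small cofiltered diagram $\bar g \colon J\op \to \Fct(\Delta^1, \Indu U(\Cc))$ with $\lim \bar g \simeq g$, together with functors $q_K \colon J\op \to K\op$ and $q_L \colon J\op \to L\op$ such that $\ev_0 \bar g \simeq \bar X \circ q_K$ and $\ev_1 \bar g \simeq \bar Y \circ q_L$. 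Put $\overline{\ker g} \colon J\op \to \Indu U(\Cc)$, $j \mapsto \ker(\bar g(j))$. Since fibres commute with limits and $\Indu U(\Cc) \hookrightarrow \Prou U\Indu U(\Cc)$ preserves finite limits, we get $\mathrm{fib}(g) \simeq \lim_{J\op} \overline{\ker g}$. For the kernel condition, fix a morphism $j \to j'$ of $J$; then $\ker\big(\overline{\ker g}(j') \to \overline{\ker g}(j)\big)$ is the total fibre of the commutative square $\bar g(j') \to \bar g(j)$ of arrows in $\Indu U(\Cc)$, which, read off in the opposite order, coincides with the fibre of the induced map $\ker(\bar X(q_K j') \to \bar X(q_K j)) \to \ker(\bar Y(q_L j') \to \bar Y(q_L j))$. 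Both inner kernels appear in the kernel diagrams of $\bar X$ and $\bar Y$ respectively, hence lie in the essential image of $\Cc$ by niceness of the two presentations; as $\Cc$ is stable, their fibre does too. Thus $\overline{\ker g}$ is a nice presentation of $\mathrm{fib}(g)$, so $\mathrm{fib}(g) \in \mathcal{E}$.

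I expect the delicate step to be the use of strictification: one must invoke \autoref{strictification} inside the category $\Indu U(\Cc)$ and dualise it so that $g$ is realised as a cofiltered limit of \emph{levelwise} morphisms whose source and target diagrams are precisely re-indexings of the \emph{given} nice presentations of $X$ and $Y$ --- it is exactly this compatibility that forces the total fibres above to sit in $\Cc$. The remaining ingredients --- stability of $\Cc$ and of $\Prou U\Indu U(\Cc)$, the symmetry of total fibres, and the identification of $\Tateu U_\mathrm{el}(\Cc) = (\Tateu U_0(\Cc))^\mathrm{st}$ with the smallest stable subcategory of $\Prou U\Indu U(\Cc)$ containing $\Indu U(\Cc)$ and $\Prou U(\Cc)$ --- are formal.
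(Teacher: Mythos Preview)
Your proposal is correct and follows essentially the same strategy as the paper: define the full subcategory of $\Prou U\Indu U(\Cc)$ of objects admitting such a ``Tate diagram'', check it contains $\Indu U(\Cc)$ and $\Prou U(\Cc)$, and show it is closed under shifts and fibres using the (dual of the) strictification result to present a morphism levelwise and then take pointwise kernels. The paper's proof is terser --- it simply says the pointwise-kernel diagram ``obviously satisfies the required property'' --- whereas you spell out the total-fibre argument that makes this work; but the architecture is identical.
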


\begin{df}\label{tatediag}
Let $\Cc$ be a $\mathbb V$-small stable $(\infty,1)$-category. For any elementary Tate object $X \in \Tateu U_\mathrm{el}(\Cc)$, we will call a Tate diagram for $X$ any $\mathbb U$-small cofiltered diagram $\bar X \colon K\op \to \Indu U(\Cc)$ as in \autoref{tatediagexist}.
\end{df}

\begin{proof}[of \autoref{tatediagexist}]
Let $\Dd$ denote the full subcategory of $\Prou U \Indu U(\Cc)$ spanned by those objects $X$ satisfying the conclusion of the proposition.
The category $\Dd$ obviously contains both the essential images of $\Indu U(\Cc)$ and $\Prou U(\Cc)$.
It suffices to prove that $\Dd$ is stable by extension. We see that it is stable by shifts and we can thus consider an exact sequence $X \to X_0 \to X_1$ in $\Prou U \Indu U(\Cc)$ such that both $X_0$ and $X_1$ are in $\Dd$.
Let $\bar X_0 \colon K\op \to \Indu U(\Cc)$ and $\bar X_1 \colon L\op \to \Indu U(\Cc)$ be a $\mathbb U$-small cofiltered diagrams of whom $X_0$ and $X_1$ are limits in $\Prou U \Indu U(\Cc)$.
Using \autoref{strictification}, we can assume $K = L$ and that we have a diagram $K\op \to \Fct(\Delta^1, \Indu U(\Cc))$ of whom the map $X_0 \to X_1$ is a limit. Considering the pointwise kernel, we get a diagram $\bar X \colon K\op \to \Indu U(\Cc)$ of whom $X$ is a limit. It obviously satisfies the required property.
\end{proof}

\begin{rmq}\label{tatediagramsgivelattices}
To state the above proposition in an informal way, any elementary Tate object $X$ can be represented by a diagram $\lim_\alpha \colim_\beta X_{\alpha\beta}$ such that for any $\alpha_0$, the kernel of canonical projection $X \to \colim_\beta X_{\alpha_0\beta}$ is actually a pro-object. This obviously imply the following corollary.
\end{rmq}

\begin{cor}\label{latticesexist}
Any elementary Tate object $X$ fits into an exact sequence
\[
X^p \to X \to X^i
\]
where $X^p \in \Prou U(\Cc)$ and $X^i \in \Indu U(\Cc)$.
\end{cor}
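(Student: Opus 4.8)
The plan is to deduce this immediately from \autoref{tatediagexist} by extracting the relevant piece of a Tate diagram. Let $X$ be an elementary Tate object and let $\bar X \colon K\op \to \Indu U(\Cc)$ be a Tate diagram for $X$, which exists by \autoref{tatediagexist}. Since $K$ is a $\mathbb U$-small cofiltered category, it is in particular nonempty, so we may pick any object $k_0 \in K$. Set $X^i = \bar X(k_0) \in \Indu U(\Cc)$; this will be the ind-object in the exact sequence. The canonical projection coming from the limit cone gives a morphism $X \to X^i$ in $\Prou U \Indu U(\Cc)$, and we let $X^p$ denote its fibre (kernel), which exists because $\Prou U \Indu U(\Cc)$ is stable. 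By the defining property of a Tate diagram (the second bullet of \autoref{tatediagexist}), the diagram $\ker(\bar X \to \bar X(k_0)) \colon (\comma{k_0}{K})\op \to \Indu U(\Cc)$ takes values in the essential image of $\Cc$.

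The key step is then to identify $X^p$ with the limit of this kernel diagram, and to observe that such a limit is a pro-object. First I would note that since $K$ is cofiltered, the undercategory $\comma{k_0}{K}$ is cofinal in $K$ (its opposite is filtered and coinitial), so $X \simeq \lim_{\comma{k_0}{K}\op} \bar X$ as well. Because limits in a stable category commute with taking fibres of a natural transformation, the fibre of $X \to \bar X(k_0)$ is the limit over $(\comma{k_0}{K})\op$ of the pointwise fibres $\ker(\bar X(k) \to \bar X(k_0))$, i.e.\ $X^p \simeq \lim_{\comma{k_0}{K}\op} \ker(\bar X \to \bar X(k_0))$. Since this kernel diagram factors through the essential image of $\Cc$ in $\Indu U(\Cc)$, and since $\comma{k_0}{K}\op$ is $\mathbb U$-small and cofiltered, the limit $X^p$ lies in the essential image of $\Prou U(\Cc)$ inside $\Prou U \Indu U(\Cc)$ — indeed this is exactly how pro-objects of $\Cc$ sit inside $\Prou U \Indu U(\Cc)$ via \autoref{indandproareinc}. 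Finally, the fibre sequence $X^p \to X \to X^i$ is the desired exact sequence.

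The main (minor) obstacle is bookkeeping about which diagram one restricts to: one must check that replacing $K$ by the undercategory $\comma{k_0}{K}$ does not change the limit and still yields a cofiltered indexing category, and that "values in the essential image of $\Cc$" genuinely produces an object of $\Prou U(\Cc)$ rather than merely of $\Prou U \Indu U(\Cc)$. Both points are formal: the first is cofinality for cofiltered categories (the dual of \cite[4.1.1.8]{lurie:htt} or similar), and the second is the definition of $\Prou U(\Cc)$ together with the fully faithful embedding $\Prou U(\Cc) \to \Prou U \Indu U(\Cc)$ of \autoref{indandproareinc}. No serious difficulty is expected; the corollary is essentially a restatement of \autoref{tatediagexist} after evaluating at a single object of $K$.
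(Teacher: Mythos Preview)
Your proof is correct and follows exactly the approach the paper intends: the corollary is extracted from \autoref{tatediagexist} by choosing any $k_0 \in K$, setting $X^i = \bar X(k_0)$, and observing that the fibre $X^p$ is the cofiltered limit of the kernel diagram, which lands in $\Cc$ and hence defines a pro-object (this is precisely the content of the remark immediately preceding the corollary). One small nit: your citation of \autoref{indandproareinc} is slightly off, since that lemma concerns the \emph{intersection} of the ind- and pro-images rather than the embedding $\Prou U(\Cc) \hookrightarrow \Prou U\Indu U(\Cc)$; the latter is just the fully faithful functor induced by applying $\Prou U$ to $\Cc \hookrightarrow \Indu U(\Cc)$.
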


\begin{df}
Let $\Cc$ be a $\mathbb V$-small stable $(\infty,1)$-category. For any elementary Tate object $X \in \Tateu U_\mathrm{el}(\Cc)$, we will call a lattice of $X$ any exact sequence
\[
X^p \to X \to X^i
\]
where $X^p \in \Prou U(\Cc)$ and $X^i \in \Indu U(\Cc)$.
\end{df}

\begin{rmq}\label{latticesgivetatediagrams}
Let $X$ be an elementary Tate object in $\Cc$. Let us consider a lattice $X^p \to X \to X^i$ of $X$. We will construct a Tate diagram for $X$ out of it. Let $\bar X^p \colon K\op \to \Cc$ be a $\mathbb U$-small cofiltered diagram of which $X^p$ is a limit in $\Prou U(\Cc)$.
The extension map $X^i[-1] \to X^p$ induces a natural transformation $X^i[-1] \to \bar X^p$ from the constant diagram $X^i[-1] \colon K\op \to \Indu U(\Cc)$ to $\bar X^p$.
The quotient of this natural transformation defines a diagram $\bar X \colon K\op \to \Indu U(\Cc)$ which is by construction a Tate diagram for $X$: for any morphism $k \to l$ in $K$, the kernel of the induced map $\bar X(l) \to \bar X(k)$ is equivalent to that of $\bar X^p(l) \to \bar X^p(k)$ which belongs to $\Cc$.
\end{rmq}

\begin{rmq}
In the literature, the word "lattice" is often dedicated to maps $X^p \to X$ whose quotient is an ind-object -- where $X^p$ is a pro-object.
\end{rmq}

\begin{lem}\label{compatiblelattice}
Let $\Cc$ be a $\mathbb V$-small stable $(\infty,1)$-category. Let $f \colon X \to Y$ be a map between elementary Tate objects in $\Cc$. For any lattice $Y^p \to Y \to Y^i$ there exists a lattice $X^p \to X \to X^i$ compatible with $f$, ie fitting in a commutative diagram
\[
\mymatrix{
X^p \ar[r] \ar[d] & X \ar[r] \ar[d]^f & X^i \ar[d] \\ Y^p \ar[r] & Y \ar[r] & Y^i
}
\]
Dually, for any lattice $X^p \to X \to X^i$ there exists a lattice $Y^p \to Y \to Y^i$ and a commutative diagram as above.
\end{lem}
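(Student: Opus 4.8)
The plan is to obtain $X^p$ by pulling the given lattice back along $f$ and then cutting the result down to an honest pro-object by means of \autoref{latticesexist}. Write $g \colon Y^p \to Y$ for the lattice map of $Y$ and form the pullback
\[
\mymatrix{
P \ar[r]^-{\pi_2} \ar[d]_-{\pi_1} & Y^p \ar[d]^-g \\ X \ar[r]_-f & Y
}
\]
in $\Prou U \Indu U(\Cc)$. Since $\Tateu U_\mathrm{el}(\Cc)$ is a stable -- hence finite-limit closed -- full subcategory of $\Prou U \Indu U(\Cc)$ and $X$, $Y$ and $Y^p$ all lie in it, the object $P$ is again an elementary Tate object. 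As $\Prou U \Indu U(\Cc)$ is stable, this pullback square is also a pushout, so $\operatorname{cofib}(\pi_1) \simeq \operatorname{cofib}(g) \simeq Y^i$.

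Next I would apply \autoref{latticesexist} to $P$, producing a lattice $P^p \to P \to P^i$ with $P^p \in \Prou U(\Cc)$ and $P^i \in \Indu U(\Cc)$. Set $X^p := P^p$, with structure map $X^p \to P \xrightarrow{\pi_1} X$, and put $X^i := \operatorname{cofib}(X^p \to X)$; then $X^p \to X \to X^i$ is an exact sequence with pro-object source, and the only remaining point is that $X^i$ is an ind-object. Pasting the pushout squares attached to the composite $X^p \to P \xrightarrow{\pi_1} X$ gives an exact sequence $P^i \to X^i \to Y^i$ (its third term being $\operatorname{cofib}(\pi_1) \simeq Y^i$), in which $P^i, Y^i \in \Indu U(\Cc)$. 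Now $\Indu U(\Cc)$ is a full and exact subcategory of $\Prou U \Indu U(\Cc)$, hence closed under extensions there -- the cofiber of a map of ind-objects formed in $\Prou U \Indu U(\Cc)$ coincides with the one formed in $\Indu U(\Cc)$, the embedding being exact and fully faithful -- so $X^i \in \Indu U(\Cc)$ and $X^p \to X \to X^i$ is a lattice of $X$.

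It remains to build the compatibility diagram. Defining the left vertical arrow to be $X^p \to P \xrightarrow{\pi_2} Y^p$, the square
\[
\mymatrix{
X^p \ar[r] \ar[d] & X \ar[d]^-f \\ Y^p \ar[r]_-g & Y
}
\]
commutes, because both composites $X^p \to X \to Y$ and $X^p \to Y^p \to Y$ equal $X^p \to P$ followed by $f\pi_1 = g\pi_2$. Passing to horizontal cofibers then yields the induced arrow $X^i \to Y^i$ and the full commutative diagram of the statement. The dual assertion follows from the mirror construction: given a lattice $X^p \to X \to X^i$ of $X$, form the pushout $Q := Y \amalg_X X^i$ -- again an elementary Tate object, with $\operatorname{fib}(Y \to Q) \simeq \operatorname{fib}(X \to X^i) \simeq X^p$ -- apply \autoref{latticesexist} to $Q$ to get a lattice $Q^p \to Q \to Q^i$, put $Y^i := Q^i$ and $Y^p := \operatorname{fib}(Y \to Q \to Q^i)$, so that $Y^p$ sits in an exact sequence $X^p \to Y^p \to Q^p$ with $X^p, Q^p \in \Prou U(\Cc)$; since $\Prou U(\Cc)$ is likewise closed under extensions in $\Prou U \Indu U(\Cc)$, $Y^p$ is a pro-object, and the compatibility diagram is read off the pushout square as before.

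The main obstacle is precisely this shrinking step: the naive pullback $P$ (resp. pushout $Q$) is only an elementary Tate object, not a pro-object (resp. not an object whose cofiber over $X$ is an ind-object), so \autoref{latticesexist} is genuinely needed, after which one must check that the cofiber $X^i$ (resp. the fiber $Y^p$) has not left $\Indu U(\Cc)$ (resp. $\Prou U(\Cc)$) -- which is exactly what closure of these subcategories under extensions inside $\Prou U \Indu U(\Cc)$ ensures.
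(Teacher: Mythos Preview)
Your proof is correct and takes a genuinely different route from the paper's. The paper constructs the compatible lattice by first turning the given lattice $Y^p \to Y \to Y^i$ into a Tate diagram for $Y$ via \autoref{latticesgivetatediagrams}, then strictifying the map $f$ using \autoref{strictification} to obtain a diagram $\theta \colon J\op \times \Delta^1 \to \Indu U(\Cc)$ whose two ends are Tate diagrams for $X$ and $Y$, and finally extracting compatible lattices from these at a chosen index $j \in J$ (with an extra step relating the resulting lattice $Y_0^\bullet$ of $Y$ back to the original $Y^\bullet$). Your argument is purely categorical: it uses only the pullback in the ambient stable category $\Prou U\Indu U(\Cc)$, the abstract existence of lattices (\autoref{latticesexist}) applied to the elementary Tate object $P = X \times_Y Y^p$, and closure of $\Indu U(\Cc)$ under extensions. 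This is cleaner and sidesteps the strictification machinery entirely; the paper's approach, by contrast, keeps explicit diagrammatic control over how the new lattice sits relative to the given Tate diagram, which feeds into the later arguments on the category $\lattices_\Cc(X)$. For the dual statement the paper simply invokes the equivalence $\Tateu U(\Cc\op) \simeq (\Tateu U(\Cc))\op$, whereas you spell out the mirror pushout construction directly; both are fine.
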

\begin{rmq}
In particular, any map $X \to Y^i$ from a Tate object to an ind-object factors through a lattice $X \to X^i$ of $X$.
\end{rmq}
\begin{proof}
Let $\bar Y^p \colon K\op \to \Cc$ be a cofiltered diagram of whom $Y^p$ is a limit in $\Prou U(\Cc)$. The procedure of \autoref{latticesgivetatediagrams} defines a Tate diagram for $Y$
\[
\bar Y = \quot{\bar Y^p}{Y^i[-1]} \colon K\op \to \Indu U(\Cc)
\]
Strictifying the map $f$ with \autoref{strictification}, we get a cofinal map $\alpha \colon J \to K$ and a diagram $\theta \colon J\op \times \Delta^1 \to \Indu U(\Cc)$ such that $\theta(-,1) \simeq \bar Y(\alpha-)$.
The functor $\theta$ defines Tate diagrams $\theta(-,0)$ for $X$ and $\theta(-,1)$ for $Y$. Let us fix $j \in J$.
Using \autoref{tatediagramsgivelattices}, this defines lattices $X^p \to X \to X^i$ and $Y_0^p \to Y \to Y_0^i$, together with a commutative diagram
\[
\mymatrix{
X^p \ar[r] \ar[d] & X \ar[r] \ar[d]^f & X^i \ar[d] \\ Y_0^p \ar[r] & Y \ar[r] & Y_0^i
}
\]
Note that by definition we have $Y_0^i = \theta(j,1) = \quot{\bar Y^p(\alpha j)}{Y^i[-1]} \simeq \ker(Y^i \to \bar Y^p(\alpha j))$ and hence get the commutative diagram
\[
\mymatrix{
Y_0^p \ar[r] \ar[d] & Y \ar[r] \ar[d]^f & Y_0^i \ar[d] \\ Y^p \ar[r] & Y \ar[r] & Y^i
}
\]
To prove the dual statement, we use the equivalence $\Tateu U(\Cc\op) \simeq \left(\Tateu U(\Cc)\right)\op$.
\end{proof}

\newcommand{\lattices}{\mathbf{Latt}}
Let us now denote by $\lattices_\Cc$ the full-subcategory of $\Fct(\Delta^1 \times \Delta^1, \Prou U\Indu U(\Cc))$ spanned by the cocartesian (and hence also cartesian) squares of the form
\[
\mymatrix{
X^p \cart \ar[r] \ar[d] & X \ar[d] \\ 0 \ar[r] & X_i \cocart
}
\]
where $X^p$ lies in the essential image of $\Prou U(\Cc)$ and $X^i$ lies in that of $\Indu U(\Cc)$. Let us denote by $q$ the natural functor $\lattices_\Cc \to \Tateu U_\mathrm{el}(\Cc)$ mapping a square as above to $X$.
Let us also denote the $\pi^p$ and $\pi_i$ the natural functors from $\lattices_\Cc$ to $\Prou U(\Cc)$ and $\Indu U(\Cc)$ respectively.

\begin{df}
For any elementary Tate object $X$, we will denote by $\lattices_\Cc(X)$ the fibre category $q^{-1}(X)$. We will call it the category of lattices of $X$.
\end{df}

\begin{rmq}
In the literature, the category $\lattices_\Cc(X)$ defined above is sometimes called the Sato grassmannian of $X$.
\end{rmq}

Let us study morphisms between lattices. Note first that a lattice $X^p \to X \to X^i$ is determined by the morphism $X^p \to X$. A map between lattices is a commutative diagram
\[
\mymatrix{
X^p_0 \ar[r] \ar[d]_\alpha & X \ar[r] \ar[d]^= & X^i_0 \ar[d] \\ X^p \ar[r] & X \ar[r] & X^i
}
\]
It is actually determined by the quotient of $\alpha$, which belongs to $\Cc$. That is what the following lemma is about.
\begin{lem}\label{latticesdifferbyc}
Let $X^\bullet = (X^p \to X \to X^i)$ be a lattice. There is a canonical equivalence
\[
\quot{\lattices_\Cc(X)}{X^\bullet} \simeq \comma{X^p}{\Cc}
\]
\end{lem}

\begin{proof}
We consider the inclusion $\{(0,0)\} \to \Delta^1 \times \Delta^1$. It induces a functor $\lattices_\Cc \to \Prou U(\Cc)$. In particular, we get
\[
P \colon \lattices_\Cc(X)^{\Delta^1} \to \Prou U(\Cc)^{\Delta^1}
\]
Let $Q \colon \Prou U(\Cc)^{\Delta^1} \to \Prou U(\Cc)^{\Delta^1}$ denote the functor mapping a morphism $x \to y$ to the morphism $y \to \quot{y}{x}$.
Let us consider a morphism of lattices of $X$
\[
\mymatrix{
X_0^p \ar[d]_\alpha \ar[r] & X \ar[d]^= \ar[r] & X_0^i \ar[d]^\beta \\ X^p \ar[r] & X \ar[r] & X^i
}
\]
The quotient $x$ of $\alpha$ is equivalent to the shift of that of $\beta$. It follows from \autoref{indandproareinc} that $x$ belongs to the essential image of $\Cc$.
The composite functor
\[
\mymatrix{
\quot{\lattices_\Cc(X)}{X^\bullet} \ar[r]^-i & \lattices_\Cc(X)^{\Delta^1} \ar[r]^-P & \Prou U(\Cc)^{\Delta^1} \ar[r]^-Q_-{\sim} & \Prou U(\Cc)^{\Delta^1}
}
\]
has values in the essential image of $\comma{X^p}{\Cc}$.
This functor is moreover fully faithful. We get a fully faithful functor
\[
\phi \colon \quot{\lattices_\Cc}{X^\bullet} \to \comma{X^p}{\Cc}
\]
Let now $\gamma \colon X^p \to x$ be in $\comma{X^p}{\Cc}$. The quotient of the induced map $\ker(\gamma) \to X$ is an ind-object and thus defines a lattice of $X$. The functor $\phi$ is essentially surjective and hence an equivalence.
\end{proof}

We also let $\quot{\lattices_\Cc}{X}$ denote the category
\[
\mymatrix{
\quot{\lattices_\Cc}{X} \ar[rr] \ar[d] \cart[][10] && \{ X \} \ar[d] \\
\lattices_\Cc^{\Delta^1} \ar[r]_-{\mathrm{target}} & \lattices_\Cc \ar[r]_-q & \Tateu U_\mathrm{el}(\Cc)
}
\]
We have a natural fully faithful functor $f_X \colon \lattices_\Cc(X) \to \quot{\lattices_\Cc}{X}$. We define dually the category $\comma{X}{\lattices_\Cc}$ and the fully faithful functor $g_X \colon \lattices_\Cc(X) \to \comma{X}{\lattices_\Cc}$.

Let $i \colon \Prou U(\Cc) \to \Indu V\Prou U(\Cc)$ and $j \colon \Indu U(\Cc) \to \Prou V \Indu U(\Cc)$ denote the canonical embeddings.
Let us denote by $\pi_!^p$ the left Kan extension of $i \pi^p$ along the functor $q \colon \lattices_\Cc \to \Tateu U_\mathrm{el}(\Cc)$. Dually, we define $j \pi_i^!$ the right Kan extension of $\pi_i$ along $q$. We get a diagram
\[
\mymatrix{
\Prou U(\Cc) \ar[d]_i & \lattices_\Cc \ar[r]^-{\pi_i} \ar[l]_-{\pi^p} \ar[d]^q & \Indu U(\Cc) \ar[d]^j \\
\Indu V\Prou U(\Cc) & \Tateu U_\mathrm{el}(\Cc) \ar@{=>}[ul]^\alpha \ar@{<=}[ur]_\beta \ar[l]^-{\pi^p_!} \ar[r]_-{\pi_i^!} & \Prou V\Indu U(\Cc)
}
\]
\begin{lem}\label{kanextensions}
The functor $\pi^p_!$ is equivalent to the embedding $\Tateu U_\mathrm{el}(\Cc) \to \Indu V\Prou U(\Cc)$ defined in \autoref{cortateinindpro}.
The functor $\pi_i^!$ is equivalent to the canonical embedding $\Tateu U_\mathrm{el}(\Cc) \to \Prou V\Indu U(\Cc)$ defined in \autoref{dftate}.
\end{lem}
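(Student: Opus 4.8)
The plan is to prove the assertion about $\pi_i^!$; the one about $\pi^p_!$ then follows by applying it to $\Cc\op$ and transporting along the equivalence $\Tateu U(\Cc\op)\simeq(\Tateu U(\Cc))\op$ of \autoref{tate-dual}, which interchanges $\Indu U\leftrightarrow\Prou U$, the functors $\pi^p\leftrightarrow\pi_i$, and left with right Kan extensions. So fix the goal: the right Kan extension $\pi_i^!=\operatorname{Ran}_q(j\pi_i)$ is equivalent to the canonical embedding $\kappa\colon\Tateu U_\mathrm{el}(\Cc)\to\Prou V\Indu U(\Cc)$ of \autoref{dftate}. Since $\Prou V\Indu U(\Cc)$ is the opposite of a $\mathbb V$-presentable category it has all $\mathbb V$-small limits, so $\operatorname{Ran}_q$ exists and is right adjoint to restriction along $q$. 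There is a fibre sequence of functors $\lattices_\Cc\to\Prou V\Indu U(\Cc)$ sending a lattice $X^\bullet$ to $X^p$, to $X$ and to $X^i$ respectively (pointwise it is the defining sequence $X^p\to X\to X^i$, which is a fibre sequence because the lattice square is bicartesian with a zero corner), whose middle term is $\kappa q$ and whose last term is $j\pi_i$. This provides $\eta\colon\kappa q\Rightarrow j\pi_i$, corresponding by adjunction to $\tilde\eta\colon\kappa\Rightarrow\pi_i^!$; it suffices to show each $\tilde\eta_X$ is an equivalence.

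Evaluating the pointwise formula for the right Kan extension, $\pi_i^!(X)\simeq\lim_{(L,\phi)\in\comma{X}{\lattices_\Cc}}j\pi_i(L)$, the index category having as objects a lattice $L$ of $q(L)$ together with a map $\phi\colon X\to q(L)$. This diagram factors as $\comma{X}{\lattices_\Cc}\to^r\comma{X}{\Indu U(\Cc)}\to\Indu U(\Cc)\to^j\Prou V\Indu U(\Cc)$, where $\comma{X}{\Indu U(\Cc)}$ is formed inside $\Prou U\Indu U(\Cc)$ and $r$ sends $(L,\phi)$ to the ind-object $L^i$ equipped with the composite $X\to^\phi q(L)\to L^i$; unwinding the adjunction shows that $\tilde\eta_X$ is exactly the comparison map $\lim_{\comma{X}{\Indu U(\Cc)}}(\text{forget})\to\lim_{\comma{X}{\lattices_\Cc}}(\text{forget})\circ r$ induced by $r$. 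I would next record the auxiliary fact that $X\simeq\lim_{(d,X\to d)\in\comma{X}{\Indu U(\Cc)}}j(d)$ for every $X$: writing $X$ as a $\mathbb U$-small cofiltered limit $\lim_\gamma x_\gamma$ of ind-objects, the subfamily $\{(x_\gamma,X\to x_\gamma)\}_\gamma$ is coinitial in $\comma{X}{\Indu U(\Cc)}$, since $\Map_{\Prou U\Indu U(\Cc)}(X,j(d))\simeq\colim_\gamma\Map_{\Indu U(\Cc)}(x_\gamma,d)$ (maps from a pro-object into a constant object), so every $X\to j(d)$ factors through a stage and the categories of such factorisations are weakly contractible; hence the limit over $\comma{X}{\Indu U(\Cc)}$ is computed on that subfamily and equals $\lim_\gamma x_\gamma\simeq X$.

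Granting this, $\tilde\eta_X$ is an equivalence as soon as $r$ is coinitial (equivalently, $r\op$ is cofinal), since then $\pi_i^!(X)\simeq\lim_{\comma{X}{\Indu U(\Cc)}}(\text{forget})\simeq X=\kappa(X)$, compatibly with $\tilde\eta_X$. Essential surjectivity of $r$ is precisely the remark following \autoref{compatiblelattice}, that every map from a Tate object to an ind-object factors through a lattice; and \autoref{compatiblelattice} itself lets one replace any $(L,\phi\colon X\to Y)$ by a lattice of $X$ mapping compatibly to it. The real content is coinitiality, that is, for each $(d,f)\in\comma{X}{\Indu U(\Cc)}$, the weak contractibility of the category of factorisations of the fixed map $f\colon X\to j(d)$ through the ind-part of a lattice. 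The plan is to exhibit this category as cofiltered: given finitely many such factorisations, one strictifies all the morphisms involved via \autoref{strictification} (as in the proof of \autoref{compatiblelattice}) and, passing far enough along the resulting filtered diagram of mutually compatible lattices, builds a single enveloping lattice of $X$ dominating the whole finite family and still refining $f$.

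This enveloping-lattice construction is where I expect the main difficulty: \autoref{compatiblelattice} supplies only existence, and, unlike in the exact-category situation, one cannot cofilter by intersecting lattices, because for lattices $M_1^p\to X$ and $M_2^p\to X$ the pullback $M_1^p\times_X M_2^p\to X$ has ind-quotient an extension of $M_1^i$ by $M_2^i$ (still an ind-object) but $M_1^p\times_X M_2^p$ is itself an extension of the pro-object $M_1^p$ by the shifted ind-object $M_2^i[-1]$, hence merely a Tate object. So the cofiltering has to be produced by hand. Once $r$ is known to be coinitial, $\tilde\eta$ is a pointwise equivalence, proving the claim for $\pi_i^!$, and the claim for $\pi^p_!$ follows by the duality of \autoref{tate-dual}.
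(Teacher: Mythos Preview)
Your approach is sound, but it takes a noticeably harder route than the paper. The paper reduces by the same duality (to $\pi^p_!$ rather than $\pi_i^!$) and then --- here is the key simplification --- only checks the identification on \emph{pure} Tate objects. For a pro-object $X$, the trivial lattice $X\to X\to 0$ is a final object of $\lattices_\Cc/X$, so the pointwise colimit formula collapses immediately to $i(X)$. For an ind-object $Y$, the paper shows that the subcategory $\Cc/Y\subset\lattices_\Cc/Y$ of lattices $c\to Y\to Y/c$ with $c\in\Cc$ is cofinal, via Quillen's theorem~A: for any $Z^\bullet\in\lattices_\Cc/Y$ the comma category is nonempty because the composite $Z^p\to Z\to Y$ from a pro-object into an ind-object factors through some $c\in\Cc$, and it is filtered because $\Cc$ has finite colimits; then $\colim_{c\in\Cc/Y}c$ recovers $Y$. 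The passage from pure to all elementary Tate objects is left implicit and rests on the universal property of \autoref{univpropT0} together with \autoref{tatification}.

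Your coinitiality of $r$ for an arbitrary $X$, by contrast, is essentially the content of \autoref{latticesfiltered} (the enveloping-lattice theorem), which the paper proves \emph{after} this lemma and independently of it. Your plan for producing the enveloping lattice via \autoref{strictification} is correct --- and indeed is how the paper eventually argues in \autoref{latticesfiltered} --- but you end up doing that hard work here, where the paper avoids it entirely. What your route buys is a self-contained argument valid for every $X$ that does not lean on the universal property to extend from pure to general Tate objects; what the paper's route buys is that the only cofinality needed is the easy one over $\Cc/Y$, with the delicate cofiltering of lattices cleanly deferred to \autoref{latticesfiltered}.
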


\begin{proof}
The statement about $\pi^!_i$ is dual to that about $\pi^p_!$. Let us prove the latter.
Let $s$ denote the section of $\pi^p$ mapping pro-object $X$ to the exact sequence $X \to X \to 0$ and let $t$ denote the section of $\pi_i$ mapping an ind-object $Y$ to $0 \to Y \to Y$.
It suffices to prove that the induced functors
\begin{align*}
&\Prou U(\Cc) \to^s \lattices_\Cc \to^q \Tateu U_\mathrm{el} \to^{\pi^p_!} \Indu V\Prou U(\Cc) \\
&\Indu U(\Cc) \to^t \lattices_\Cc \to^q \Tateu U_\mathrm{el} \to^{\pi^p_!} \Indu V\Prou U(\Cc) \\
\end{align*}
Let us first deal with the case of $\Prou U(\Cc)$. Let $X$ be a pro-object. The image $s(X)$ is a final object in the category $\quot{\lattices_\Cc}{X}$.
Hence the canonical map
\[
\alpha_{s(X)} \colon \pi^p_! q s(X) \simeq \colim_{Z^\bullet \in \quot{\lattices_\Cc}{X}} i \pi^p(Z^\bullet) \to i \pi^p s(X) \simeq i(X)
\]
is an equivalence.
Let now $Y$ be an ind-object.
Let us prove that the category $\quot{\Cc}{Y}$ of exact sequence $y \to Y \to \quot{Y}{y}$ where $y \in \Cc$ is cofinal in $\quot{\lattices_\Cc}{Y}$. This obviously implies the result.
To prove this cofinality, we will use Quillen's theorem A. Let us denote by $g$ the functor $\quot{\Cc}{Y} \to \quot{\lattices_\Cc}{Y}$.
Let $Z^\bullet = Z^p \to Z \to Z^i$ be a lattice with a map $Z \to Y$.
From Quillen's theorem A (see \cite[4.1.3.1]{lurie:htt}),  it suffices to prove that the simplicial set
\[
K = \comma{Z^\bullet}{\left(\quot{\Cc}{Y}\right)} = \quot{\Cc}{Y} \times_{\quot{\lattices_\Cc}{Y}} \comma{Z^\bullet}{\left(\quot{\lattices_\Cc}{Y}\right)}
\]
is contractible. We have a diagram
\[
\mymatrix{
Z^p \ar[r] & Z \ar[d] \ar[r] & Z^i \\ & Y &
}
\]
The composite map $Z^p \to Y$, from a pro-object to an ind-object, factors through an object $y \in \Cc$. Hence we get a commutative diagram
\[
\mymatrix{
Z^p \ar[r] \ar[d] & Z \ar[d] \ar[r] & Z^i \ar[d] \\ y \ar[r] & Y \ar[r] & \quot{Y}{y}
}
\]
This proves that $K$ is not empty. The category $\Cc$ admits finite colimits and it follows that $K$ is filtered. The result is then deduced from \cite[5.5.8.7]{lurie:htt}.
\end{proof}

\begin{rmq}
Let $X$ be a Tate object in $\Cc$ and let $X_0^\bullet = (X_0^p \to X \to X_0^i)$ and $X^\bullet_1 = (X_1^p \to X \to X^i_1$ be two lattices for $X$. There is a lattice $X^\bullet$ for $X$ with maps $X_0^\bullet \from X^\bullet \to X_1^\bullet$.
To prove this statement, let us use \autoref{latticesgivetatediagrams}. It defines two Tate diagrams
\begin{align*}
&\bar X_0 \colon K\op \to \Indu U(\Cc) \\
&\bar X_1 \colon L\op \to \Indu U(\Cc)
\end{align*}
for $X$. Strictifying the identity of $X$ using \autoref{strictification}, we get a diagram
\[
\theta \colon J\op \times \Delta^1 \to \Indu U(\Cc)
\]
with cofinal maps $\alpha \colon J \to K$ and $\beta \colon J \to L$ such that $\theta(-,0) \simeq \bar X_0(\alpha -)$ and $\theta(-,1) \simeq \bar X_1(\beta -)$.
The diagram $\theta(-,1)$ is again a Tate diagram for $X$ and hence defines a lattice $X^\bullet$ for $X$. It naturally comes with morphisms $X_0^\bullet \from X^\bullet \to X_1^\bullet$.
\end{rmq}

We will improve the above remark into the following $\infty$-categorical incarnation of a phenomenon first discovered in \cite[theorem 6.7]{bgw:tate}.

\begin{thm}\label{latticesfiltered}
Let $\Cc$ be a $\mathbb U$-small stable and idempotent complete $(\infty,1)$-category.
For any elementary Tate object $X$ in $\Cc$, the category $\lattices_\Cc(X)$ is $\mathbb U$-small and both filtered and cofiltered.
Moreover the functor $f_X \colon \lattices_\Cc(X) \to \quot{\lattices_\Cc}{X}$ is cofinal and the functor $g_X \colon \lattices_\Cc(X) \to \comma{X}{\lattices_\Cc}$ is coinitial.
\end{thm}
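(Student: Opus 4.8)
The plan is to reduce everything to two assertions by self-duality, and then to prove those via an \emph{enveloping lattice} construction. By \autoref{tate-dual} there is an equivalence $\Tateu U(\Cc\op)\op \simeq \Tateu U(\Cc)$; unwinding the definitions, it carries $\lattices_\Cc(X)$ to $\lattices_{\Cc\op}(X)\op$ (with $X$ on the right the elementary Tate object of $\Cc\op$ corresponding to $X$), exchanges $f_X$ with $g_X$, and exchanges ``filtered'' with ``cofiltered'' and ``cofinal'' with ``coinitial'', while ``$\mathbb U$-small'' is self-dual. Hence it suffices to show that $\lattices_\Cc(X)$ is $\mathbb U$-small and cofiltered and that $f_X$ is cofinal; transporting these statements along the duality applied to $\Cc\op$ then yields the rest.

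For cofilteredness, I would build, for every finite simplicial set $S$ and every diagram $\sigma\colon S \to \lattices_\Cc(X)$, an extension $\hat\sigma\colon S^{\triangleleft} \to \lattices_\Cc(X)$ whose value at the adjoined initial vertex is a lattice of $X$ with compatible maps to all the $\sigma(v)$. By \autoref{latticesgivetatediagrams} each vertex of $S$ determines a Tate diagram for $X$ over a $\mathbb U$-small filtered poset; by \autoref{latticesdifferbyc} each edge of $\sigma$ is governed by an object of $\Cc$ and can be realised, after strictification, as a natural transformation of such Tate diagrams, and similarly for the higher simplices of $S$. Iterating \autoref{strictification} --- as in the proof of \autoref{tatediagexist}, where two Tate diagrams were merged over a fibre product of indexing posets --- one merges the whole finite diagram into a single $\mathbb U$-small filtered poset $J$ carrying cofinal projections onto all the indexing posets and a coherent $J\op$-indexed diagram in $\Indu U(\Cc)$ that simultaneously refines all the Tate diagrams and all the morphisms between them. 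Evaluating at any $j\in J$ produces a lattice of $X$ with compatible maps to every $\sigma(v)$, i.e. the adjoined initial vertex of $\hat\sigma$; together with \autoref{latticesexist} for nonemptiness and the standard characterisation of cofiltered $\infty$-categories (\cite[\S 5.3.1]{lurie:htt}), this shows $\lattices_\Cc(X)$ is cofiltered. I expect this simultaneous strictification to be the main obstacle: \autoref{strictification} handles a single morphism and \autoref{tatediagexist} merges two Tate diagrams, whereas here one must merge an entire finite diagram --- including its higher cells --- coherently and so that the resulting maps of lattices assemble into a genuine functor $S^{\triangleleft}\to\lattices_\Cc(X)$; I would carry this out by forming the relevant fibre product of indexing $\infty$-categories over a slice of $\Fct(\Delta^1,\Indu U(\Cc))$ and then replacing it by a cofinal filtered poset via \cite[4.3.2.14]{lurie:htt}, mirroring the proof of \autoref{strictification}.

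For $\mathbb U$-smallness, \autoref{latticesdifferbyc} shows that every slice $\quot{\lattices_\Cc(X)}{X^\bullet}$ is equivalent to $\comma{X^p}{\Cc}$, hence $\mathbb U$-small since $\Cc$ is; in particular all mapping spaces of $\lattices_\Cc(X)$ are $\mathbb U$-small, and dually every coslice $\comma{X^\bullet}{\lattices_\Cc(X)}$ is equivalent to $\comma{\Cc}{X^i}$ and so is $\mathbb U$-small. It remains to bound the set of objects. Fix a Tate diagram $\bar X\colon\Lambda\op\to\Indu U(\Cc)$ for $X$ as in \autoref{tatediagexist}; for each $k\in\Lambda$ the object $\ker(X\to\bar X(k))$ is a $\mathbb U$-small cofiltered limit of objects of $\Cc$ by the defining property of a Tate diagram, hence a pro-object, so $L_k = \bigl(\ker(X\to\bar X(k))\to X\to\bar X(k)\bigr)$ is a lattice of $X$, and the $L_k$ form a $\mathbb U$-small family. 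Given any lattice $M^\bullet$ of $X$, strictifying $\id_X$ between $\bar X$ and the Tate diagram attached to $M^\bullet$ (as in the remark preceding the statement, via \autoref{latticesgivetatediagrams}) produces some $k\in\Lambda$ together with a morphism $L_k\to M^\bullet$ in $\lattices_\Cc(X)$. Thus every object of $\lattices_\Cc(X)$ lies in $\comma{L_k}{\lattices_\Cc(X)}$ for some $k$; since there are $\mathbb U$-many such $k$ and each of those coslices is $\mathbb U$-small, $\lattices_\Cc(X)$ has a $\mathbb U$-small set of objects, and with its $\mathbb U$-small mapping spaces it is $\mathbb U$-small.

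Finally, for the cofinality of $f_X$ I would invoke Quillen's Theorem A \cite[4.1.3.1]{lurie:htt}: it suffices to check that for every object $\xi$ of $\quot{\lattices_\Cc}{X}$ the comma $\infty$-category $\comma{\xi}{f_X}$ is weakly contractible. Nonemptiness is (the dual half of) \autoref{compatiblelattice}, which extends a lattice along the structure map of $\xi$ to a lattice of $X$; and $\comma{\xi}{f_X}$ is cofiltered by the enveloping-lattice construction of the previous step carried out inside it, hence weakly contractible. Therefore $f_X$ is cofinal. Transporting the cofilteredness, smallness and cofinality just established along the duality of the first step, applied to $\Cc\op$, gives the remaining assertions: $\lattices_\Cc(X)$ is filtered and $g_X$ is coinitial.
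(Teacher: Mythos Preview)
Your duality reduction and your smallness argument are essentially sound (for the latter the paper packages the same idea more compactly via \autoref{filteredsmall} together with \autoref{latticesdifferbyc}). The real gap is the ``enveloping lattice'' step: you have not actually produced, from a finite diagram $S\to\lattices_\Cc(X)$, a coherent extension to $S^{\triangleleft}$. The merging of Tate diagrams you sketch handles a single morphism via \autoref{strictification}, but assembling the higher cells of $S$ into a genuine functor $S^{\triangleleft}\to\lattices_\Cc(X)$ is precisely the obstacle you flag, and you have not resolved it. There is a second gap in your cofinality argument: even granting your construction, the enveloping lattice comes with maps \emph{to} the given lattices, and there is no evident reason it should carry a compatible map \emph{from} $\xi$; hence the construction does not take place inside $\comma{\xi}{f_X}$, and your claim that this comma category is cofiltered is unsupported. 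This second gap is a symptom of an awkward pairing: cofinality of $f_X$ goes naturally with \emph{filteredness}, not cofilteredness.

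The paper sidesteps all of this with a single observation you are missing: the ambient slice $\quot{\lattices_\Cc}{X}$ already admits finite colimits, because $\lattices_\Cc$ is a full subcategory of $\Fct(\Delta^1\times\Delta^1,\Prou U\Indu U(\Cc))$ closed under finite colimits (cocartesian squares, the zero corner, and the subcategories $\Prou U(\Cc)$, $\Indu U(\Cc)$ are all preserved by finite colimits in a stable category). One therefore proves that $\lattices_\Cc(X)$ is \emph{filtered} together with cofinality of $f_X$, and gets the rest by duality. Given a finite diagram $K\to\lattices_\Cc(X)$, push it along the fully faithful $f_X$, take its colimit $Y^\bullet$ in $\quot{\lattices_\Cc}{X}$, apply \autoref{compatiblelattice} \emph{once} to the structure map $Y\to X$ to obtain a morphism $Y^\bullet\to f_X(X^\bullet)$ for some $X^\bullet\in\lattices_\Cc(X)$, compose to get a $K^{\triangleright}$-cocone landing in the essential image of $f_X$, and pull it back along $f_X$. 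The same trick, run in $\comma{\xi}{(\quot{\lattices_\Cc}{X})}$, shows $\comma{\xi}{f_X}$ is filtered and hence contractible for any $\xi$, so Quillen's Theorem~A gives cofinality. No simultaneous strictification is needed: all coherence is absorbed by the colimit in the ambient slice, and \autoref{compatiblelattice} is invoked only for a single morphism.
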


This theorem, together with \autoref{kanextensions}, implies the following
\begin{cor}\label{colimoflattices}
Any elementary Tate object $X$ is the colimit in $\Indu U \Prou U(\Cc)$
\[
X \simeq \colim X^p
\]
and the limit in $\Prou U\Indu U(\Cc)$
\[
X \simeq \lim X^i
\]
where the limit and the colimit are indexed by $(X^p \to X \to X^i) \in \lattices_\Cc(X)$.
\end{cor}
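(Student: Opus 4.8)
The plan is to combine the two identifications in \autoref{kanextensions} with the pointwise formulas for the Kan extensions $\pi_!^p$ and $\pi_i^!$, and then to collapse the indexing diagrams using the finality assertions of \autoref{latticesfiltered}. By \autoref{kanextensions}, the canonical embedding $\Tateu U_\mathrm{el}(\Cc) \to \Indu V\Prou U(\Cc)$ --- which factors through the embedding $\Tateu U_\mathrm{el}(\Cc) \to \Indu U\Prou U(\Cc)$ of \autoref{cortateinindpro} --- is equivalent to $\pi_!^p$. Writing out the defining colimit of the left Kan extension at an elementary Tate object $X$, exactly the colimit appearing in the proof of \autoref{kanextensions}, gives
\[
X \simeq \pi_!^p(X) \simeq \colim_{Z^\bullet \in \quot{\lattices_\Cc}{X}} i\pi^p(Z^\bullet)
\]
computed in $\Indu V\Prou U(\Cc)$, the colimit being taken over the comma category $\quot{\lattices_\Cc}{X}$.

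First I would invoke the cofinality of $f_X \colon \lattices_\Cc(X) \to \quot{\lattices_\Cc}{X}$ supplied by \autoref{latticesfiltered}. Since cofinal functors do not change colimits (see \cite[4.1.1.8]{lurie:htt}), the colimit above may be restricted along $f_X$; as the composite $\pi^p f_X$ sends a lattice $X^\bullet = (X^p \to X \to X^i)$ to its pro-part $X^p$, this yields
\[
X \simeq \colim_{X^\bullet \in \lattices_\Cc(X)} X^p
\]
Next I would resolve the universe bookkeeping. By \autoref{latticesfiltered} the category $\lattices_\Cc(X)$ is $\mathbb U$-small and filtered, so the right-hand side is a $\mathbb U$-small filtered colimit of objects of $\Prou U(\Cc)$. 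Such a colimit already exists in $\Indu U\Prou U(\Cc)$ by item \emph{(ii)} of \autoref{indu-thm}, and the fully faithful comparison $\Indu U\Prou U(\Cc) \to \Indu V\Prou U(\Cc)$ preserves it; hence the equivalence in fact holds in $\Indu U\Prou U(\Cc)$, which is the first assertion.

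The limit statement is entirely dual. The dual half of \autoref{kanextensions} identifies the canonical embedding $\Tateu U_\mathrm{el}(\Cc) \to \Prou U\Indu U(\Cc)$ of \autoref{dftate} with the right Kan extension $\pi_i^!$ of $j\pi_i$ along $q$; its pointwise limit formula presents $X$ as the limit of $\pi_i$ over the comma category $\comma{X}{\lattices_\Cc}$. I would then use that $g_X \colon \lattices_\Cc(X) \to \comma{X}{\lattices_\Cc}$ is coinitial (again \autoref{latticesfiltered}) and that coinitial functors do not change limits, together with the $\mathbb U$-smallness and cofilteredness of $\lattices_\Cc(X)$, to rewrite this limit as
\[
X \simeq \lim_{X^\bullet \in \lattices_\Cc(X)} X^i
\]
in $\Prou U\Indu U(\Cc)$.

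I expect the only delicate points --- and the main obstacle --- to be the two universe comparisons: checking that the a priori $\mathbb V$-indexed colimit (resp. limit) produced by the Kan-extension formula really is the $\mathbb U$-small filtered (resp. cofiltered) one, so that the equivalences are witnessed inside $\Indu U\Prou U(\Cc)$ and $\Prou U\Indu U(\Cc)$ rather than at the larger universe. Both are settled exactly by the smallness and (co)filteredness content of \autoref{latticesfiltered}; everything else is the formal fact that (co)final functors preserve (co)limits, together with the identification of $\pi^p f_X$ and $\pi_i g_X$ with the pro-part and ind-part functors on $\lattices_\Cc(X)$.
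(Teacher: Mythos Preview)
Your argument is correct and is exactly the one the paper has in mind: the corollary is stated immediately after \autoref{latticesfiltered} with the remark that it follows from that theorem together with \autoref{kanextensions}, and you have simply spelled out this implication --- the pointwise Kan-extension formula, the (co)finality of $f_X$ and $g_X$, and the $\mathbb U$-smallness/filteredness needed to descend from $\Indu V$ to $\Indu U$. There is no meaningful difference between your route and the paper's.
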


\begin{lem}\label{filteredsmall}
Let $K$ be a $\mathbb V$-small simplicial set. Assume that $K$ is filtered and that for any vertex $k \in K$, the simplicial sets $\comma{k}{K}$ and $\quot{K}{k}$ are $\mathbb U$-small. Then the simplicial set $K$ is $\mathbb U$-small.
\end{lem}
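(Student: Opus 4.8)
The plan is to reduce the statement to showing that each set $K_n$ of $n$-simplices is $\mathbb U$-small: this is equivalent to $K$ being $\mathbb U$-small since the indexing category $\Delta$ is countable. I would first treat the vertices, where the filtering hypothesis does the real work, and then bootstrap to higher simplices using the (co)slice descriptions.

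\textbf{Vertices.} Since a filtered simplicial set is nonempty (apply the filtering condition to the empty diagram), fix a vertex $k_0 \in K_0$. Recall that the coslice $\comma{k_0}{K}$ has, as $m$-simplices, the maps $\Delta^0 \star \Delta^m \cong \Delta^{m+1} \to K$ sending the initial vertex to $k_0$ (see \cite[\S 1.2.9]{lurie:htt}); in particular its vertices are the edges of $K$ with source $k_0$, and the forgetful functor $\comma{k_0}{K} \to K$ sends such an edge to its target. As $\comma{k_0}{K}$ is $\mathbb U$-small, the set $S \subseteq K_0$ of targets of edges with source $k_0$ is $\mathbb U$-small. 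Now let $v \in K_0$ be arbitrary; applying the filtering condition to the two vertices $k_0$ and $v$ produces a vertex $u$ together with edges $v \to u$ and $k_0 \to u$. The second edge shows $u \in S$, while the first exhibits $v$ in the image of the forgetful functor $\quot{K}{u} \to K$. Hence
\[
K_0 = \bigcup_{u \in S} \operatorname{im}\bigl((\quot{K}{u})_0 \to K_0\bigr),
\]
a $\mathbb U$-small union (indexed by $S$) of $\mathbb U$-small sets (the categories $\quot{K}{u}$ being $\mathbb U$-small by hypothesis), so $K_0$ is $\mathbb U$-small.

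\textbf{Higher simplices.} For $n \geq 1$, restriction to the initial vertex gives a map $K_n \to K_0$, $\sigma \mapsto \sigma(0)$, and, again through $\Delta^0 \star \Delta^{n-1} \cong \Delta^n$, its fibre over $v$ is precisely the set $(\comma{v}{K})_{n-1}$ of $(n-1)$-simplices of the coslice of $v$. Each such fibre is $\mathbb U$-small by hypothesis and the base $K_0$ is $\mathbb U$-small by the previous step, so $K_n = \coprod_{v \in K_0} (\comma{v}{K})_{n-1}$ is a $\mathbb U$-small union of $\mathbb U$-small sets, hence $\mathbb U$-small. Together with the vertex case this shows every $K_n$ is $\mathbb U$-small, so $K$ is $\mathbb U$-small.

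The only delicate point --- more bookkeeping than genuine obstacle --- is matching simplices of the slice $\quot{K}{u}$ and coslice $\comma{v}{K}$ with simplices of $K$ having a prescribed final, resp.\ initial, vertex via the join identification $\Delta^0 \star \Delta^m \cong \Delta^{m+1}$; once this is pinned down, the vertex step is the one genuinely using filteredness, and the higher-simplex step is pure universe arithmetic.
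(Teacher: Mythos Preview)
Your proof is correct and follows essentially the same idea as the paper's: cover $K$ by slices indexed over a coslice of a fixed vertex. The paper's argument is terser---it simply writes $K = \bigcup_{l \in \comma{k}{K}} \quot{K}{l}$ for a fixed vertex $k$, using filteredness to cone any simplex (together with $k$) to a common $l$, so every simplex of $K$ lies in the image of some $\quot{K}{l}$ with $l$ reachable from $k$---whereas you split the argument into two steps: the vertex step (which is exactly the paper's argument restricted to $0$-simplices) and then a bootstrap to higher simplices by fibering $K_n$ over $K_0$ via the initial vertex. Your second step is a mild simplification in that it no longer invokes filteredness, only the coslice hypothesis and the already-established smallness of $K_0$; the paper's single-shot version instead appeals to filteredness uniformly for every simplex. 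Both arguments use the slice and coslice hypotheses in the same essential way, and neither buys anything the other doesn't.
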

\begin{proof}
Let $k \in K$ be any vertex. As $K$ is filtered we have
\[
K = \bigcup_{l \in \comma{k}{K}} \quot{K}{l}
\]
The result follows.
\end{proof}

\begin{proof}[of \autoref{latticesfiltered}]
Let us delay the size issue. We will prove that $\lattices_\Cc(X)$ is filtered and that $f_X$ is cofinal. What remains is deduced using the equivalence $\Tateu U(\Cc\op) \simeq (\Tateu U(\Cc))\op$.

We first say that $\lattices_\Cc(X)$ is not empty -- see \autoref{tatediagramsgivelattices}. Let now $\bar X^\bullet \colon K \to \lattices_\Cc(X)$ be a finite diagram.
We consider the composite diagram
\[
f_X \bar X^\bullet \colon K \to \quot{\lattices_\Cc}{X}
\]
The category $\quot{\lattices_\Cc}{X}$ admits finite colimits, and we can hence extend $f_X \bar X^\bullet$ into a colimit diagram $K^\triangleright \to \quot{\lattices_\Cc}{X}$.
Let us denote by $Y^\bullet = (Y^p \to Y \to Y^i)$ the colimit. It comes with a map $\psi \colon Y \to X$.
We get a lattice $X^\bullet = (X^p \to X \to X^i)$ from \autoref{compatiblelattice}, with a map $X^\bullet \to Y^\bullet$ lifting $\psi$.
Using the composition in $\quot{\lattices_\Cc}{X}$, we get a diagram  $K^\triangleright \to \quot{\lattices_\Cc}{X}$ whose vertices lie in the essential image of $f_X$. Since $f_X$ is fully faithful, we get a diagram
\[
K^\triangleright \to \lattices_\Cc(X)
\]
extending $\bar X^\bullet$. This proves the category $\lattices_\Cc(X)$ is filtered.

We now have to prove the functor $f_X$ is cofinal. Let $Y^\bullet \in \quot{\lattices_\Cc}{X}$. From \autoref{compatiblelattice}, we deduce that the category
\[
\lattices_\Cc(X) \times_{\quot{\lattices_\Cc}{X}} \comma{Y^\bullet}{\left(\quot{\lattices_\Cc}{X}\right)}
\]
is not empty. It is moreover filtered and hence the underlying simplicial set is contractible. We conclude using Quillen's theorem A -- see \cite[4.1.3.1]{lurie:htt}.

To see that $\lattices_\Cc(X)$ is essentially $\mathbb U$-small, we now use \autoref{filteredsmall} and \autoref{latticesdifferbyc}.
\end{proof}

\section{K-theory}

In this section, we will prove \autoref{introksusp}. The strategy of the proof is inspired by that in the case of exact categories, which can be found in \cite{saito:deloop}.

\begin{df}
Let $\Cc \to \Dd$ be a fully faithful exact functor between $\mathbb V$-small stable and idempotent complete $(\infty,1)$-categories. We denote by $\quot{\Dd}{\Cc}$ the cofibre of the functor $\Cc \to \Dd$ in the category of ($\mathbb V$-small) stable and idempotent complete $(\infty,1)$-categories.
Note that its existence is guarantied by \cite[Part 5]{bgt:characterisationk}.
Let us call $\quot{\Dd}{\Cc}$ the Verdier quotient of $\Dd$ by $\Cc$.
\end{df}

\begin{prop}
Let $\Cc$ be a $\mathbb V$-small stable and idempotent complete $(\infty,1)$-category. The commutative diagram
\[
\mymatrix{
\Cc \ar[r] \ar[d] & \Indu U(\Cc) \ar[d]
\\
\Prou U(\Cc) \ar[r] & \Tateu U(\Cc)
}
\]
induces an equivalence between the Verdier quotients 
\[
\quot{\Indu U(\Cc)}{\Cc} \to^\sim \quot{\Tateu U(\Cc)}{\Prou U(\Cc)}
\]
\end{prop}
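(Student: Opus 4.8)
The plan is to exhibit an explicit functor $\Indu U(\Cc) \to \quot{\Tateu U(\Cc)}{\Prou U(\Cc)}$ and check it has the universal property of the Verdier quotient $\quot{\Indu U(\Cc)}{\Cc}$. The composite $\Indu U(\Cc) \to \Tateu U(\Cc) \to \quot{\Tateu U(\Cc)}{\Prou U(\Cc)}$ kills $\Cc$, since $\Cc$ lands inside $\Prou U(\Cc)$. Hence, by the universal property of the Verdier quotient, it factors through a canonical exact functor
\[
\Phi \colon \quot{\Indu U(\Cc)}{\Cc} \to \quot{\Tateu U(\Cc)}{\Prou U(\Cc)}.
\]
I will construct a quasi-inverse. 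Dually, the functor $\Tateu U(\Cc) \to ?$ I want should send $\Prou U(\Cc)$ to zero; the natural candidate target is $\quot{\Indu U(\Cc)}{\Cc}$, and the natural candidate functor is built from the lattice machinery of Section~\ref{sectionlattices}: a lattice $X^\bullet = (X^p \to X \to X^i)$ assigns to $X$ the ind-object $X^i \in \Indu U(\Cc)$, and passing to $\quot{\Indu U(\Cc)}{\Cc}$ kills the indeterminacy of this choice. Indeed by \autoref{latticesdifferbyc} two lattices of the same $X$ differ by an object of $\Cc$, so the assignment $X \mapsto [X^i]$ is well-defined on the localization, and it visibly kills $\Prou U(\Cc)$ (a pro-object has the lattice $X^p \to X \to 0$). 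More precisely, I would define this functor on the stable category $\Tateu U_\mathrm{el}(\Cc)$ using \autoref{latticesfiltered} and \autoref{kanextensions}: the functor $\pi_i \colon \lattices_\Cc \to \Indu U(\Cc)$ composed with the localization $\Indu U(\Cc) \to \quot{\Indu U(\Cc)}{\Cc}$ descends along $q \colon \lattices_\Cc \to \Tateu U_\mathrm{el}(\Cc)$ because the fibers $\lattices_\Cc(X)$ are cofiltered (\autoref{latticesfiltered}) and the transition maps become equivalences after killing $\Cc$; concretely one takes a right Kan extension along $q$ and uses that $\lattices_\Cc(X)$ is cofinal in the relevant comma category. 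This produces $\Psi \colon \Tateu U_\mathrm{el}(\Cc) \to \quot{\Indu U(\Cc)}{\Cc}$, which is exact, sends $\Prou U(\Cc)$ to $0$, and hence factors through a functor $\bar\Psi \colon \quot{\Tateu U(\Cc)}{\Prou U(\Cc)} \to \quot{\Indu U(\Cc)}{\Cc}$ after passing to idempotent completions.

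Next I would check $\Phi$ and $\bar\Psi$ are mutually inverse. For $\bar\Psi \Phi \simeq \id$: on the generators, an ind-object $Y$ has tautological lattice $0 \to Y \to Y$, so $\Psi(Y) \simeq [Y]$, giving $\bar\Psi \Phi \simeq \id$ on $\Indu U(\Cc)$, hence on the localization since both are exact and the localization is generated by the image of $\Indu U(\Cc)$. For $\Phi\bar\Psi \simeq \id$: on a generator coming from $X \in \Tateu U_\mathrm{el}(\Cc)$ with lattice $X^p \to X \to X^i$, we have $\Phi\bar\Psi([X]) \simeq [X^i]$ in $\quot{\Tateu U(\Cc)}{\Prou U(\Cc)}$, and the defining exact sequence $X^p \to X \to X^i$ with $X^p \in \Prou U(\Cc)$ shows $[X] \simeq [X^i]$ there. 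One must check these identifications are natural, i.e.\ assemble into natural equivalences of functors, not merely objectwise isomorphisms; this is where I would lean on \autoref{compatiblelattice} (lattices can be chosen functorially compatible with a given map) and on the cofinality statements in \autoref{latticesfiltered} to get honest natural transformations rather than ad hoc choices. Finally, idempotent completion is functorial and preserves equivalences, so the equivalence at the level of $\Tateu U_\mathrm{el}$ versus $\quot{\Indu U(\Cc)}{\Cc}$'s "elementary" part upgrades to the stated equivalence of Verdier quotients.

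The main obstacle I anticipate is the \emph{functoriality} of the lattice-to-ind-object assignment: a priori $X \mapsto X^i$ is only defined up to the cofiltered category $\lattices_\Cc(X)$ of choices, and to get a genuine functor $\Tateu U_\mathrm{el}(\Cc) \to \quot{\Indu U(\Cc)}{\Cc}$ rather than a mere assignment on objects I need to organize all these choices coherently. The clean way is to avoid choices entirely by working with $\lattices_\Cc$ and Kan extensions: \autoref{kanextensions} already identifies the right Kan extension of $\pi_i$ along $q$ with the embedding $\Tateu U_\mathrm{el}(\Cc) \to \Prou V\Indu U(\Cc)$, so I expect the correct statement is that post-composing $\pi_i$ with $\Indu U(\Cc) \to \quot{\Indu U(\Cc)}{\Cc}$ and then right Kan extending along $q$ produces the desired $\Psi$, with the Kan extension computing a \emph{constant} cofiltered limit (since all transition maps are equivalences modulo $\Cc$) and hence being well-behaved. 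A secondary technical point is tracking universe bookkeeping ($\mathbb U$ versus $\mathbb V$) through the idempotent-complete Verdier quotients, but \cite[Part 5]{bgt:characterisationk} handles the existence and universal properties uniformly, so this should be routine.
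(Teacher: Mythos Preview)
Your approach is genuinely different from the paper's. The paper never touches the lattice machinery here: it passes to the presentable world by applying $\Indu V(-)$ to both rows of the diagram, obtaining a ladder of adjunctions
\[
\Indu V(\Cc) \rightleftarrows \Indu V\Indu U(\Cc) \rightleftarrows \Indu V\!\left(\quot{\Indu U(\Cc)}{\Cc}\right)
\quad\text{over}\quad
\Indu V\Prou U(\Cc) \rightleftarrows \Indu V\Tateu U(\Cc) \rightleftarrows \Indu V\!\left(\quot{\Tateu U(\Cc)}{\Prou U(\Cc)}\right),
\]
and then shows the induced left adjoint $p$ between the rightmost terms is an equivalence by checking (a) $p$ is fully faithful and (b) its right adjoint is conservative. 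Assertion (a) boils down to a mapping-space computation showing that if $x \in \Indu V\Indu U(\Cc)$ dies under the right adjoint of $\Indu V(\Cc) \hookrightarrow \Indu V\Indu U(\Cc)$ then its image dies under the right adjoint of $\Indu V\Prou U(\Cc) \hookrightarrow \Indu V\Tateu U(\Cc)$; assertion (b) is immediate from $\Tateu U(\Cc)$ being generated by ind- and pro-objects under finite limits and retracts. Since the small quotients are recovered as compact objects, this suffices.

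Your concrete inverse via lattices is viable, but the gap you flag is real and not quite closed by your sketch: a right Kan extension along $q$ has no reason to be exact, and \autoref{compatiblelattice} gives you compatible lattices for a single map, not the coherent system needed for an honest natural transformation $\Phi\bar\Psi \Rightarrow \id$. The clean fix is to bypass the Kan extension and define $\Psi$ as the composite
\[
\Tateu U_\mathrm{el}(\Cc) \hookrightarrow \Prou U\Indu U(\Cc) \xrightarrow{\ \Prou U(L)\ } \Prou U\!\left(\quot{\Indu U(\Cc)}{\Cc}\right),
\]
where $L$ is the localization. Both arrows are exact, and \autoref{tatediagexist} shows the composite lands in the constant pro-objects (every transition map in a Tate diagram has fiber in $\Cc$, hence becomes invertible after $L$), so $\Psi$ factors through an exact functor to $\quot{\Indu U(\Cc)}{\Cc}$. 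It kills $\Prou U(\Cc)$ and restricts to $L$ on $\Indu U(\Cc)$ by inspection. Now $\bar\Psi\Phi \simeq \id$ follows from the universal property of $L$, and $\Phi\bar\Psi \simeq \id$ from that of $L'$ together with the natural equivalence $[X] \simeq [X^i]$ coming from the cofiber sequence $X^p \to X \to X^i$ in $\Tateu U(\Cc)$, which is now genuinely natural because $\Psi$ was built functorially rather than by choosing lattices. Your route then ties the equivalence directly to the lattice picture; the paper's route is a shorter adjunction chase that avoids Section~\ref{sectionlattices} entirely.
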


\begin{proof}
Let us fix the following notations
\[
\begin{array}{ll}
\mathrm I^\mathbb V \Cc = \Indu V(\Cc)  & \mathrm I^\mathbb V \mathrm I \Cc = \Indu V \Indu U(\Cc) \\
\mathrm I^\mathbb V \mathrm P \Cc = \Indu V \Prou U(\Cc) \hspace{5mm} & \mathrm I^\mathbb V \mathrm T \Cc = \Indu V \Tateu U(\Cc)
\end{array}
\]
We also set
\[
\mathcal E = \Indu V\left(\quot{\Indu U(\Cc)}{\Cc} \right) \text{~~~and~~~}
\mathcal E' = \Indu V\left(\quot{\Tateu U(\Cc)}{\Prou U(\Cc)} \right)
\]
The commutative diagram
\[
\mymatrix{\Cc \ar[r] \ar[d] & \Indu U(\Cc) \ar[d] \ar[r] & \quot{\Indu U(\Cc)}{\Cc} \ar[d]
\\
\Prou U(\Cc) \ar[r] & \Tateu U(\Cc) \ar[r] & \quot{\Tateu U(\Cc)}{\Prou U(\Cc)}
}
\]
induces the diagram of adjunctions between presentable stable $(\infty,1)$-categories
\[
\mymatrix{
\mathrm I^\mathbb V \Cc \ar@<2pt>[r] \ar@<-2pt>[d] &
\mathrm I^\mathbb V \mathrm I \Cc \ar@<2pt>[r]^-\beta \ar@<-2pt>[d]_f \ar@<2pt>[l]^\varepsilon &
\Ee \ar@<-2pt>[d]_p \ar@<2pt>[l]^-\alpha \\
\mathrm I^\mathbb V \mathrm P \Cc \ar@<2pt>[r] \ar@<-2pt>[u]_g &
\mathrm I^\mathbb V \mathrm T \Cc \ar@<2pt>[l]^e \ar@<-2pt>[u]_\phi \ar@<2pt>[r]^-b &
\Ee' \ar@<2pt>[l]^-a \ar@<-2pt>[u]_q
}
\]
We have represented here the left adjoints on top or on the left of their right adjoint.
It follows from \cite[5.12 and 5.13]{bgt:characterisationk} that the two lines in the above diagram are cofibre sequences of presentable stable $(\infty,1)$-categories.
Since $\quot{\Indu U(\Cc)}{\Cc}$ (resp. $\quot{\Tateu U(\Cc)}{\Prou U(\Cc)}$) is idempotent complete, it is equivalent to the category of compact objects in $\mathcal E$ (resp. $\mathcal E'$). It hence suffices to prove that $p$ and $q$ are equivalences.
We will prove the sufficient assertions
\begin{assertions}
\item The functor $p$ is fully faithful.\label{pff}
\item The functor $q$ is conservative.\label{qcons}
\end{assertions}
Let us start with \ref{pff}. Using \cite[5.5]{bgt:characterisationk}, we deduce that both $a$ and $\alpha$ are fully faithful. Moreover, the functor $f$ is also fully faithful, and it thus suffices to prove the equivalence $f \alpha \simeq ap$.
We have $b f \alpha \simeq p \beta \alpha \simeq p$.
It is now enough to prove that $f \alpha$ has values in the essential image of $a$ (so that $a b f \alpha \simeq f \alpha$).
To do so, we will show that for any object $x \in \mathrm I^\mathbb V \mathrm I \Cc$, if $\varepsilon(x)$ vanishes, then so does $ef(x)$.
Let $\bar x \colon K \to \Indu U(\Cc)$ denote a $\mathbb V$-small filtered diagram whose colimit in $\mathrm I^\mathbb V \mathrm I \Cc$ is $x$. Let also $\bar y \colon L\op \to \Cc$ be a $\mathbb U$-small cofiltered diagram. We denote by $y$ its limit in $\Prou U(\Cc)$.
The image $ef(x)$ is the functor $\Prou U(\Cc) \to \sSets$ mapping $y$ to the simplicial set
\[
\colim_{k \in K} \colim_{l \in L} \Map_{\Indu U(\Cc)}(\bar y(l),\bar x(k)) \simeq
\colim_{l \in L} \colim_{k \in K} \Map_{\Indu U(\Cc)}(\bar y(l),\bar x(k))
\]
On the other hand, the assumption $\varepsilon(x) = 0$ implies that for any $c \in \Cc$, the space
\[
\colim_{k \in K} \Map_{\Indu U(\Cc)}(c,\bar x)
\]
is contractible. It follows from \cite[5.5.8.7]{lurie:htt} that $ef(x)$ vanishes.

We can now focus on \ref{qcons}. Since $q$ preserves exact sequences and $a$ is fully faithful, it suffices to prove that if $z \in \mathrm I^\mathbb V \mathrm T \Cc$ is such that both $\phi(z)$ and $e(z)$ vanish, then so does $z$.
We can see $z$ as a functor $\Tateu U(\Cc)\op \to \sSets$ preserving finite limits while $\phi(z)$ and $e(z)$ are its restriction respectively to $\Indu U(\Cc)\op$ and $\Prou U(\Cc)$. As $\Tateu U(\Cc)$ is generated by ind- and pro-objects under finite limits and retracts, we deduce that $z$ is equivalent to $0$.
\end{proof}

\begin{cor}\label{ktheorysusp}
Let $\Cc$ be a $\mathbb V$-small stable and idempotent complete $(\infty,1)$-category.
The spectrum of non-connective K-theory of $\Tateu U(\Cc)$ is the suspension of the non-connective $K$-theory of $\Cc$:
\[
\mathbb K(\Tateu U(\Cc)) \simeq \Sigma \mathbb K(\Cc)
\]
\end{cor}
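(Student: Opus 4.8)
The plan is to run, in the present setting, the argument Saito uses for exact categories in \cite{saito:deloop}: apply the non-connective K-theory functor to two Verdier sequences and make the middle terms vanish by an Eilenberg swindle. First I would recall from \cite{bgt:characterisationk} that non-connective K-theory is a localizing invariant on $\inftyCat^{\mathbb V\mathrm{,st,id}}$, so that every exact sequence $\Cc' \to \Dd' \to \quot{\Dd'}{\Cc'}$ of $\mathbb V$-small stable idempotent complete $(\infty,1)$-categories (with $\Cc' \to \Dd'$ fully faithful and exact) goes to a fibre sequence of spectra. I would apply this to the two exact sequences $\Cc \to \Indu U(\Cc) \to \quot{\Indu U(\Cc)}{\Cc}$ and $\Prou U(\Cc) \to \Tateu U(\Cc) \to \quot{\Tateu U(\Cc)}{\Prou U(\Cc)}$ --- the relevant inclusions being fully faithful and exact by \autoref{indu-thm} and \autoref{dftate} --- to get fibre sequences of spectra
\[
\mathbb K(\Cc) \to \mathbb K(\Indu U(\Cc)) \to \mathbb K(\quot{\Indu U(\Cc)}{\Cc})
\]
and
\[
\mathbb K(\Prou U(\Cc)) \to \mathbb K(\Tateu U(\Cc)) \to \mathbb K(\quot{\Tateu U(\Cc)}{\Prou U(\Cc)}).
\]

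The key step is to see that both middle terms vanish. Since $\Indu U(\Cc)$ admits all $\mathbb U$-small filtered colimits, in particular countable coproducts, it is a \emph{flasque} stable $(\infty,1)$-category, so every localizing invariant vanishes on it by the Eilenberg swindle and $\mathbb K(\Indu U(\Cc)) \simeq 0$. As non-connective K-theory is insensitive to passing to the opposite category and $\Prou U(\Cc)\op = \Indu U(\Cc\op)$ is again flasque, likewise $\mathbb K(\Prou U(\Cc)) \simeq \mathbb K(\Indu U(\Cc\op)) \simeq 0$ (equivalently, one runs the dual swindle with the countable products of $\Prou U(\Cc)$). Inserting these vanishings into the two fibre sequences gives $\mathbb K(\quot{\Indu U(\Cc)}{\Cc}) \simeq \Sigma \mathbb K(\Cc)$ and $\mathbb K(\Tateu U(\Cc)) \simeq \mathbb K(\quot{\Tateu U(\Cc)}{\Prou U(\Cc)})$.

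To finish, I would invoke the preceding proposition, which gives an equivalence $\quot{\Indu U(\Cc)}{\Cc} \to^\sim \quot{\Tateu U(\Cc)}{\Prou U(\Cc)}$, hence $\mathbb K(\quot{\Indu U(\Cc)}{\Cc}) \simeq \mathbb K(\quot{\Tateu U(\Cc)}{\Prou U(\Cc)})$, and then string the equivalences together:
\[
\mathbb K(\Tateu U(\Cc)) \simeq \mathbb K(\quot{\Tateu U(\Cc)}{\Prou U(\Cc)}) \simeq \mathbb K(\quot{\Indu U(\Cc)}{\Cc}) \simeq \Sigma \mathbb K(\Cc).
\]
The real difficulty is the preceding proposition, which we may take as given; within this deduction the only point that needs care is the swindle step --- one must use that the \emph{non-connective} (not merely connective) K-theory vanishes on flasque stable categories, and keep the universe bookkeeping coherent, all the categories here being $\mathbb V$-small while the coproducts feeding the swindle are harmless $\mathbb U$-small colimits.
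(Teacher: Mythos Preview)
Your proposal is correct and follows essentially the same route as the paper: apply the localizing property of non-connective K-theory to the two Verdier sequences, use the Eilenberg swindle to kill $\mathbb K(\Indu U(\Cc))$ and $\mathbb K(\Prou U(\Cc))$, and invoke the preceding proposition to identify the two quotients. If anything, you are more careful than the paper about the swindle for $\Prou U(\Cc)$, noting that one uses countable products (or passes to the opposite category) rather than countable coproducts.
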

\begin{rmq}
This corollary is an $\infty$-categorical version of a theorem of Sho Saito in exact $1$-categories in \cite{saito:deloop}.
\end{rmq}
\begin{proof}
Let us use the notations $\mathrm{I} \Cc = \Indu U(\Cc)$, $\mathrm P \Cc = \Prou U(\Cc)$ and $\mathrm T \Cc = \Tateu U(\Cc)$. 
Because the K-theory functor preserves cofibre sequences of stable categories (see \cite[sect. 9]{bgt:characterisationk}), we get two exact sequences in the $(\infty,1)$-category of spectra
\begin{align*}
\mathbb K (\Cc) \to \mathbb K (&\mathrm I \Cc) \to \mathbb K \left(\quot{\Indu U(\Cc)}{\Cc}\right) \\
\mathbb K (\mathrm P \Cc) \to \mathbb K (&\mathrm T \Cc) \to \mathbb K \left(\quot{\Indu U(\Cc)}{\Cc}\right)
\end{align*}
The vanishing of $\mathbb K(\mathrm P \Cc)$ and $\mathbb K (\mathrm I \Cc)$ -- since those categories contain countable sums -- concludes the proof.
\end{proof}

\section{An application: families of Tate complexes}
\label{sectionstacks}

In this last section, we will study Tate complexes in (derived) algebraic geometry. In this context, one should think of Tate complexes as \emph{structured} infinite dimensional vector bundles (or more generally quasicoherent complexes).
In this section, we will produce additive invariants on such Tate complexes, out of the common additive invariants of finite dimensional vector bundles (or perfect complexes), using our \autoref{ktheorysusp}.

As an example of such additive invariants, we will be able to define the dimension of a Tate complex (or rather its Euler characteristic). Given a finite dimensional vector bundle on a variety $X$, the dimension can be seen as a locally constant function $X \to \Z$ -- or equivalently as a class in $\homol^0(X,\Z)$.
The shift in K-theory we proved in \autoref{ktheorysusp} we allow us to define the dimension of a Tate complex as a class in $\homol^1(X,\Z)$ -- or equivalently as a $\Z$-torsor over $X$. 

Another very interesting example of such an invariant will be the determinant of a Tate complex. This determinant will be a class in $\homol^2(X,\Gm)$, hence classifying a gerbe with lien $\Gm$ over $X$.

Note that this question was, at least partially, addressed in \cite{drinfeld:tate} or \cite[section 3.2]{osipovzhu:categorical} in the context of (usual) algebraic geometry.

In the work, we will focus on the derived algebro-geometric setting, that has not been covered in any previous work.

We start with a short introduction to derived algebraic geometry.

\paragraph*{DAG in a nutshell:}
Let us assume $k$ is a field.
First introduced by Toën and Vezzosi in \cite{toen:hagii}, derived algebraic geometry is a generalisation of algebraic geometry in which we replace commutative algebras over $k$ by simplicial commutative algebras up to homotopy.
We refer to \cite{toen:dagems} for a recent survey of this theory.

Derived algebraic geometry allows us to study ill-behaved geometric situations. The most emblematic example is the study of non-generic intersections, or of quotients by a wild action.
Note that usual objects of algebraic geometry -- varieties, schemes, algebraic spaces of stacks -- embed in derived algebraic geometry.
Another nice feature of this theory is the cotangent. If we usually require smoothness to define a tangent bundle, dual to the cotangent, it is no longer needed in derived algebraic geometry (we only need finiteness conditions).
The main trick is to consider the (co)tangent not as a quasi-coherent sheaf, but as a complex of such.
The category of quasi-coherent complexes becomes a central object in this context. This category is actually a stable and idempotent complete $(\infty,1)$-category. This core example of such a category motivates the results of this article.
The derived category of quasi-coherent complex of a derived stack $X$ admits a full-subcategory $\Perf(X)$ of so-called perfect complexes. Perfect complexes are to complexes what finitely generated projective modules are to modules.
In particular, they behave regarding duality.

We will denote by $\sCAlg_k$ the $(\infty,1)$-category of simplicial commutative algebras over $k$. It is the $(\infty,1)$-localization of a model category along weak equivalences. 
Let us denote $\dAff_k$ the opposite $(\infty,1)$-category of $\sCAlg_k$. It is the category of derived affine schemes over $k$.

A derived prestack is a presheaf $\dAff_k\op \simeq \sCAlg_k \to \sSets$. We will thus write $\presh(\dAff_k)$ for the $(\infty,1)$-category of derived prestacks.
A derived stack is a prestack satisfying the étale descent condition. We will denote by $\dSt_k$ the $(\infty,1)$-category of derived stacks.
It comes with an adjunction
\[
(-)^+ \colon \presh(\dAff_k) \rightleftarrows \dSt_k
\]
where the left adjoint $(-)^+$ is called the stackification functor.

Let $\Perf$ denote the derived stack of perfect complexes $A \mapsto \Perf(A)$.
Let also $\mathrm{K}$ denote the connective K-theory functor (seen as a group object in spaces).
\begin{df}
Let us define the groups in prestacks
\begin{align*}
&\mathrm{K}^{\Perf} \colon A \mapsto \mathrm{K}(\Perf(A))\\
&\mathrm K^{\Tate} \colon A \mapsto \mathrm K(\Tateu U(\Perf(A)))
\end{align*}
We also define the prestack of Tate complexes
\[
\Tate \colon A \mapsto \Tateu U(\Perf(A))
\]
\end{df}

From \autoref{ktheorysusp} we get an exact sequence
\[
\mymatrix{
\B \mathrm K^{\Perf} \ar[r] & \mathrm K^{\Tate} \ar[r] & \mathrm K_0^{\Tate}
}
\]
\begin{lem}\label{k0vanishes}
The prestack $\mathrm K_0^{\Tate}$ vanishes Nisnevich-locally.
It follows that the map $\B \mathrm K^{\Perf} \to \mathrm K^{\Tate}$ is a Nisnevich-local equivalence.
\end{lem}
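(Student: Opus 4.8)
The plan is to identify the prestack $\mathrm K_0^{\Tate}$ with negative $K$-theory and then to show that the latter vanishes on the points of the Nisnevich site; the second assertion of the lemma will then follow formally. The functor $\mathbb K$ appearing in \autoref{ktheorysusp} is the non-connective one, whereas $\mathrm K^{\Tate}$ is built from connective $K$-theory; since the two agree in non-negative degrees (see \cite{bgt:characterisationk}), \autoref{ktheorysusp} gives, naturally in $A$,
\[
\mathrm K_0^{\Tate}(A) \;=\; \pi_0\,\mathrm K\big(\Tateu U(\Perf(A))\big) \;\simeq\; \pi_{-1}\,\mathbb K(\Perf(A)) \;=\; \mathbb K_{-1}(\Perf(A)) \;\simeq\; \mathbb K_{-1}(A) .
\]
Running the same argument on the truncation $\tau_{\geq 1}$ identifies the fibre of $\mathrm K^{\Tate}\to\mathrm K_0^{\Tate}$ with $\B\mathrm K^{\Perf}$, which recovers the exact sequence displayed just before the lemma.

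Next I would reduce to ordinary rings. By Bass' fundamental theorem, $\mathbb K_{-1}(A)$ is a direct summand of $\mathbb K_0(A[t,t^{-1}])$, and $\mathbb K_0$ of a connective ring depends only on its $\pi_0$; hence $\mathbb K_{-1}(A)\simeq\mathbb K_{-1}(\pi_0 A)$, and it suffices to prove that the abelian-group-valued presheaf $B\mapsto\mathbb K_{-1}(B)$ on ordinary commutative $k$-algebras has trivial Nisnevich sheafification. Non-connective $K$-theory satisfies Nisnevich descent (Thomason--Trobaugh), so this sheafification is controlled by the stalks, i.e.\ by the values of $\mathbb K_{-1}$ on Henselian local rings, and the whole statement comes down to the vanishing $\mathbb K_{-1}(R)=0$ for $R$ Henselian local. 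I would obtain this either by a direct reference, or by combining continuity of $\mathbb K_{-1}$ with Néron--Popescu desingularisation to reduce to the regular case (where all negative $K$-theory vanishes), or from the cdh-local description of negative $K$-theory together with the degree-one acyclicity of the combinatorial complex attached to a Henselian local scheme.

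The remaining assertion is then formal: Nisnevich sheafification is a left-exact localisation, so it preserves the fibre sequence $\B\mathrm K^{\Perf}\to\mathrm K^{\Tate}\to\mathrm K_0^{\Tate}$ of prestacks of pointed spaces; once the base has been rendered contractible by the first part, $\B\mathrm K^{\Perf}\to\mathrm K^{\Tate}$ is an equivalence of Nisnevich sheaves, that is, a Nisnevich-local equivalence. I expect the one genuinely delicate point to be the vanishing of $\mathbb K_{-1}$ on Henselian local rings --- in particular determining the generality in which it is needed, since $\pi_0 A$ ranges over arbitrary commutative $k$-algebras; by contrast the identification with $\mathbb K_{-1}$, the passage to $\pi_0$, and the descent bookkeeping are routine once \autoref{ktheorysusp} is in hand.
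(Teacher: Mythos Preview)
Your proof is correct and follows the same route as the paper: identify $\mathrm K_0^{\Tate}(A)$ with $\mathbb K_{-1}(A)$ via \autoref{ktheorysusp}, pass to $\pi_0 A$ using the Bass exact sequence together with the fact that $\mathrm K_0$ is insensitive to derived structure, and then invoke the vanishing of $\mathbb K_{-1}$ on Henselian local rings --- for this last step the paper simply cites \cite[theorem~3.7]{drinfeld:tate}, which is exactly your ``direct reference'' option. One caution about your alternatives: the N\'eron--Popescu route requires the Henselian local ring to be a G-ring (e.g.\ excellent), which is not guaranteed for arbitrary $\pi_0 A$ over $k$, so that particular variant does not cover the full generality you need; stick with the reference to Drinfeld.
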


\begin{proof}
Is suffices to prove that for any Henselian simplicial commutative algebra $A$, we have
\[
\mathrm K_{-1}(\Perf(A) \simeq \mathrm K_0(\Tateu U(\Perf(A))) \simeq 0
\]
Recall that $A$ is Henselian if and only if $\pi_0(A)$ is. Using the Bass exact sequences, we get
\[
\mymatrix{
\mathrm K_0(A[t]) \oplus K_0(A[t^{-1}]) \ar[r] \ar[d]^f & \mathrm K_0(A[t,t^{-1}]) \ar[r] \ar[d]^g & \mathrm K_{-1}(A) \ar[r] \ar[d]^h & 0 \\
\mathrm K_0(\pi_0(A)[t]) \oplus K_0(\homol^0(A)[t^{-1}]) \ar[r] & \mathrm K_0(\pi_0(A)[t,t^{-1}]) \ar[r] & \mathrm K_{-1}(\pi_0(A)) \ar[r] & 0
}
\]
Since $\mathrm K_0$ only depends on the non-derived part of an affine scheme (see \cite[2.3.2]{waldhausen:ktheory}), both $f$ and $g$ are isomorphisms and hence so is $h$. We can thus restrict to the non-derived case -- which can be found in \cite[theorem 3.7]{drinfeld:tate}.
\end{proof}

\begin{thm}\label{detclass}
Let $i$ be any additive invariant of perfect complexes: it can then be encoded as a group morphism $i \colon \mathrm K^{\Perf} \to G$ for any group object $G$. 

The invariant $i$ induces an additive invariant $[i]$ of Tate complexes:
\[
[i] \colon \mathrm K^{\Tate} \to \B G
\]
with values in the classifying stack $\B G$.
In particular, for any derived algebraic stack $X$ and any Tate complex $E$ over $X$, we get a $G$-bundle classified by the map
\[
\mymatrix{
X \ar[r]^-E & \Tate \ar[r] & \mathrm K^{\Tate} \ar[r]^-{[i]} & \B G
}
\]
\end{thm}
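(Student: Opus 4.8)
The plan is to deduce the statement formally from \autoref{ktheorysusp} and \autoref{k0vanishes}, by delooping $i$ and transporting the resulting map along a Nisnevich-local equivalence. Since $i \colon \mathrm K^{\Perf} \to G$ is a morphism of group objects, the bar construction yields a delooping $\B i \colon \B \mathrm K^{\Perf} \to \B G$. On the other hand, \autoref{ktheorysusp} applied to each $\Perf(A)$ produces the exact sequence $\B \mathrm K^{\Perf} \to \mathrm K^{\Tate} \to \mathrm K_0^{\Tate}$ of group prestacks, whose first map $u \colon \B \mathrm K^{\Perf} \to \mathrm K^{\Tate}$ is a Nisnevich-local equivalence by \autoref{k0vanishes}. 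The goal is then to extend $\B i$ along $u$.

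To carry this out I would use that the classifying stack $\B G$, being a stack, satisfies étale --- hence a fortiori Nisnevich --- descent, so it is a Nisnevich sheaf. Consequently, for any prestack $P$ the restriction along $P \to L_{\mathrm{Nis}} P$ induces an equivalence $\Map_{\presh(\dAff_k)}(L_{\mathrm{Nis}} P, \B G) \to \Map_{\presh(\dAff_k)}(P, \B G)$. Since $u$ becomes an equivalence after applying $L_{\mathrm{Nis}}$, it follows that
\[
u^* \colon \Map_{\presh(\dAff_k)}(\mathrm K^{\Tate}, \B G) \to \Map_{\presh(\dAff_k)}(\B \mathrm K^{\Perf}, \B G)
\]
is an equivalence of spaces. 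I would then define $[i]$ to be the essentially unique map $\mathrm K^{\Tate} \to \B G$ with $[i] \circ u \simeq \B i$, that is, $(u^*)^{-1}(\B i)$. Given a derived algebraic stack $X$ and a Tate complex over it, classified by $E \colon X \to \Tate$, one obtains the announced $G$-bundle by composing $E$ with the canonical map $\Tate \to \mathrm K^{\Tate}$ and with $[i]$.

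There remains the additivity of $[i]$: as $u$ and $\B i$ are morphisms of group prestacks and $L_{\mathrm{Nis}}$ preserves finite products, the lift $[i]$ is again a morphism of group prestacks. Specialising to $i = \Det \colon \mathrm K^{\Perf} \to \B \Gm$ with $\B G = \mathrm K(\Gm, 2)$ recovers the determinantal class $[\Det_E] \in \homol^2(X, \Oo_X^{\times})$. The one point requiring genuine care is this bookkeeping of the delooping and group structures --- verifying that $\B i$ and the fibre inclusion $u$ are honest morphisms of group prestacks and that this is preserved by Nisnevich sheafification --- so I expect that, rather than any new conceptual difficulty, to be the main obstacle; the geometric content has already been extracted in \autoref{k0vanishes}.
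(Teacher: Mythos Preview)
Your proposal is correct and follows essentially the same route as the paper: deloop $i$ to $\B i$, use \autoref{k0vanishes} to identify the sheafifications of $\B \mathrm K^{\Perf}$ and $\mathrm K^{\Tate}$, and transport $\B i$ along this identification. The only cosmetic difference is that the paper writes the map explicitly as the composite $\mathrm K^{\Tate} \to (\mathrm K^{\Tate})^+ \simeq (\B \mathrm K^{\Perf})^+ \to \B G$ through the stackification unit, whereas you phrase the same construction as inverting $u^*$ on mapping spaces into the sheaf $\B G$; your additional remarks on preserving the group structure are not spelled out in the paper but are harmless bookkeeping.
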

\begin{rmq}
In the theorem above, we only need the map $i$ to preserve the group structure. It does not need to be preserve the commutativity constraints.
\end{rmq}

\begin{proof}
From \autoref{k0vanishes}, we see that the stack $(\mathrm K^{\Tate})^+$ associated to the prestack $\mathrm K^{\Tate}$ is equivalent to that associated to $\B \mathrm K^{\Perf}$.
We can hence form $\B i \colon \B \mathrm K^{\Perf} \to \B G$.
Let us denote by $\delta \colon (\B \mathrm K^{\Perf})^+ \to \B G$ the map of stacks obtained by stackifying $\B i$.
We can hence set
\[
\mymatrix{
[i] \colon \mathrm K^{\Tate} \ar[r] & (K^{\Tate})^+ \simeq (\B \mathrm K^{\Perf})^+ \ar[r]^-{\delta} & \B G
}
\]
\end{proof}

\paragraph{A motivating example:}
We define the determinantal anomaly to be the invariant $[\Det]$ associated to the determinant $\Det \colon \mathrm K^{\Perf} \to \B \Gm$:
\[
[\Det] \colon \mathrm K^{\Tate} \to \B \B \Gm \simeq \mathrm K(\Gm,2)
\]
In particular, any Tate object $E$ over a derived stack $X$ defines a determinantal anomaly $[\Det_E] \in \homol^2(X, \Oo_X^{\times})$.

The above construction is for instance useful in the following application. In \cite{hennion:floops}, the author introduces the $d$-dimensional formal loops space $\mathcal L ^d(X)$ with values in a nice enough derived Artin stack.
It is a derived stack representing maps from the punctured formal neighbourhood $\widehat \A^d \smallsetminus \{0\}$ to $X$.
We prove in \loccit that the tangent of this formal loops space is a Tate object over $\mathcal L^d(X)$. It follows from the above construction the existence of a class, called the determinantal class
\[
[\mathrm{det}_\T] \in \mathrm H^2(\mathcal L^d(X), \Oo^{\times})
\]
This class generalises the class introduced by Kapranov and Vasserot in \cite{kapranovvasserot:loop1}, that is proved to be an obstruction to the existence of sheaves of "chiral differential operators".

For instance, when $X$ is the stack $\B G$ classifying $G$-bundles, for an algebraic group $G$, this determinantal class determines a central extension of the tangent dg-Lie algebra of $\mathcal L^d(X)$ at the neutral element. This dg-Lie algebra is a first step toward higher dimensional Kac-Moody algebras.
Indeed, when $d=1$, we find back $\mathfrak g \otimes k(\!(t)\!)$ and its usual extension by a central charge. This direction is currently being studied in a joint work with Giovanni Faonte and Mikhail Kapranov.

\clearpage
\phantomsection
\addcontentsline{toc}{section}{\iflanguage{francais}{Références}{References}}

\end{document}